\numberwithin{equation}{section} 
\newcommand*\rmd{\mathop{}\!\mathrm{d}}
\newcommand{\ubz}{{\genfrac{(}{)}{0pt}{1}{0}{\tilde{u}^\flat}}}
\newcommand{\ubb}{ { \left( \genfrac{(}{)}{0pt}{1}{0}{\tilde{u}^\flat} \right) } }
\newcommand{\tu}{\tilde{u}}
\newcommand{\tub}{\tilde{u}^\flat}
\newcommand{\ta}{\widetilde{A}}
\newcommand{\tb}{\widetilde{B}}
\newcommand{\td}{\widetilde{D}}
\newcommand{\twd}{\widetilde{WD}}
\newcommand{\ttk}{{K_0}}
\newcommand{\ttkk}{{K_0^2}}
\newcommand{\tk}{{\widetilde{K}_0}}
\newcommand{\tl}{\widetilde{\mathcal{L}}^\flat}
\newcommand{\tn}{\widetilde{\mathcal{N}}^\flat}
\newcommand{\tnf}{\widetilde{\mathcal{N}}^1}
\newcommand{\tng}{\widetilde{\mathcal{N}}^2}
\newcommand{\tnc}{\widetilde{\mathcal{N}}^\mathtt{C}}
\newcommand{\tnd}{\widetilde{\mathcal{N}}^\mathtt{D}}
\newcommand{\teta}{\tilde{\eta}}
\newcommand{\tr}{\widetilde{R}}
\newcommand{\tH}{\widetilde{H}}
\newcommand{\tq}{\widetilde{Q}}
\newcommand{\ts}{\widetilde{S}}
\newcommand{\ttt}{\widetilde{T}}
\newcommand{\tw}{\widetilde{W}}
\newcommand{\tv}{\widetilde{V}}
\newcommand{\tx}{\widetilde{X}}
\newcommand{\ty}{\widetilde{Y}}
\newcommand{\tz}{\widetilde{Z}}
\newcommand{\tga}{\widetilde{\Gamma}}
\newcommand{\utra}[2]{u^{(1)}(#1,#2)}
\newcommand{\pic}{\Pi^\mathtt{C}}
\newcommand{\tmmd}{\widetilde{M}^\mathtt{D}}
\newcommand{\ntu}{\nabla_{\! \tilde u}}
\newcommand{\nuu}{\nabla_{\! u}}
\newcommand{\mtd}{\mathtt{D}}
\newcommand{\mtc}{\mathtt{C}}
\newcommand{\tuc}{\tilde{u}^{\mtc}}
\newcommand{\tud}{\tilde{u}^{\mtd}}
\newcommand{\thd}{{ {\Theta}^{\mtd} }}
\newcommand{\wik}{{\tilde{w}_{i,k}}}
\newcommand{\wiik}{{\tilde{w}_{i'\! ,k}}}
\newcommand{\wikk}{{\tilde{w}_{i,k'}}}
\newcommand{\wiikk}{{\tilde{w}_{i'\! ,k'}}}
\newcommand{\wok}{{\tilde{w}_{1,k}}}
\newcommand{\wokk}{{\tilde{w}_{1,k'}}}
\newcommand{\wjjk}{{\tilde{w}_{j'\!,k}}}
\newcommand{\tvv}{{\tilde{v}}}
\newcommand{\trr}{{\tilde{r}}}
\newcommand{\tll}{{\tilde{l}}}
\newcommand{\tlam}{\tilde{\lambda}}
\newcommand{\re}{\mathrm{Re}}
\newcommand{\pd}{{p_{{\!}_d}}}
\newcommand{\ps}{p_{{}_S}}
\newcommand{\Div}{\mathrm{div}}
\newcommand{\asmp}[1]{{$(\mathfrak{#1})$}} 
\newcommand{\sek}{{ \mathtt{e}_k }}
\newcommand{\sekk}{{ \mathtt{e}_{k'} }}
\newcommand{\hs}{{ \quad }} 
\providecommand{\abs}[1]{\left\vert#1\right\vert}
\providecommand{\norm}[1]{\left\Vert#1\right\Vert}
\providecommand{\pnorm}[2]{\left\Vert#1\right\Vert_{L^{#2}}}
\providecommand{\Rn}[1]{\mathbb{R}^{#1}}
\providecommand{\rd}{\Rn{d}}
\newcommand{\resp}{{resp.~}} 
\newcommand{\na}{\nabla}
\newcommand{\al}{\alpha}
\newcommand{\dt}{\partial_t}
\newcommand{\ddt}{ \frac{\rmd}{\rmd t}}
\def\hal{\frac{1}{2}}
\def\ls{\lesssim}
\def\p{\partial}
\def\dis{\displaystyle}
\def\La{\Lambda}
\def\fj1{\mathcal{J}^{-1}}
\def\m{\mathcal{M}}
\def\lds{\Lambda^s}
\def\ldso{\Lambda^{s-1}}
\def\ldll{\Lambda^\ell}
\def\ldl1{\Lambda^{\ell-1}}
\newtheorem{lem}{Lemma}[section]
\newtheorem{prop}[lem]{Proposition}
\newtheorem{thm}{Theorem}
\newtheorem{remark}[lem]{Remark}
\begin{document}

\title[Partially dissipative hyperbolic systems]{Global Classical Solutions to Partially Dissipative Hyperbolic Systems Violating the Kawashima Condition}

\author{Peng Qu}
\address{School of Mathematical Sciences\\ Fudan University\\
Shanghai 200433, China
\newline \indent and
\newline \indent The Institute of Mathematical Sciences\\
The Chinese University of Hong Kong\\
Shatin, NT, Hong Kong}
\email[P. Qu]{pqu@fudan.edu.cn}
\thanks{P. Qu was supported by Yang Fan Foundation of Shanghai on Science and Technology (No. 15YF1401100)}

\author{Yanjin Wang}
\address{School of Mathematical Sciences\\
Xiamen University\\
Xiamen, Fujian 361005, China
\newline \indent and
\newline \indent The Institute of Mathematical Sciences\\
The Chinese University of Hong Kong\\
Shatin, NT, Hong Kong}
\email[Y. J. Wang]{yanjin$\_$wang@xmu.edu.cn}
\thanks{Y. J. Wang was supported by the National Natural Science Foundation of China (No. 11201389)}

\keywords{Quasilinear hyperbolic system; Balance law; Partially dissipative; Kawashima condition; Global classical solution}

\subjclass[2010]{35L45; 35L60}



\begin{abstract}
This paper considers the Cauchy problem for the quasilinear hyperbolic system of balance laws in {$\rd$, $d\ge 2$.
The system is partially dissipative in the sense that there is an eigen-family violating the Kawashima condition.} By imposing certain supplementary degeneracy conditions with respect to the non-dissipative eigen-family,
global unique smooth solutions near constant equilibria are constructed.
The proof is based on the introduction of the partially normalized coordinates, a delicate structural analysis,
a family of scaled energy estimates with {minimum fractional} derivative counts
and a refined decay estimates of the dissipative components of the solution.
\end{abstract} 

\maketitle


\section{Introduction}\label{intro sec}

Consider the following $n$-component quasilinear hyperbolic system of balance laws in $d$ space dimensions:
\begin{equation}\label{1.1_Sys}
\begin{cases}
\displaystyle {\color{black}\partial_t \left( G(u)\right) }+ \sum_{k=1}^d  \left( F^k(u)\right)_{x_k} = S(u) ,
\\ \displaystyle
{u\mid_{t=0} = u_0.}
\end{cases}
\end{equation}
Here $u=(u_1,\dots,u_n)^T: \mathbb{R}\times \mathbb{R}^d \rightarrow \mathbb{R}^n $ is the unknown,
{\color{black} $G = (G_1, \dots, G_n)^T : \mathbb{R}^n \rightarrow \mathbb{R}^n$, $F^k = (F^k_1, \dots, F^k_n)^T : \mathbb{R}^n \rightarrow \mathbb{R}^n $ $(k=1,\dots,d)$ and  $S = (S_1, \dots, S_n)^T : \mathbb{R}^n \rightarrow \mathbb{R}^n $ are given smooth functions. It is assumed that $\nuu G(u)$ is invertible, hence, by the chain rule, the system can be rewritten into}
\begin{equation} \label{sys}
\begin{cases}
\displaystyle \partial_t u + \sum_{k=1}^d  A^k(u) u_{x_k} = Q(u) ,
\\ \displaystyle
u\mid_{t=0} = u_0.
\end{cases}
\end{equation}
{\color{black}Here the coefficient matrices $A^k(u) = (\nuu G(u))^{-1} \nuu F^k(u)$ $(k=1,\dots,d)$ and the inhomogeneous term $Q(u) =  (\nuu G(u))^{-1}  S(u)$.}
By hyperbolicity, for any $u\in\mathbb{R}^n$ and any $\omega = (\omega_1, \dots, \omega_d)^T \in \mathbb{S}^{d-1}$ the matrix
\begin{equation}\label{A matrix}
A(u,\omega) := \sum_{k=1}^d \omega_k A^k(u)
 \end{equation}
has $n$ real eigenvalues $\lambda_1(u,\omega),\dots,\lambda_n(u,\omega)$ and a complete set of left (\resp right) eigenvectors $l_i(u,\omega) = (l_{i1}(u,\omega), \dots, l_{in}(u,\omega)) $ (\resp $r_i(u,\omega) = (r_{1i}(u,\omega), \dots, r_{ni}(u,\omega))^T$) $(i=1,\dots,n)$. It is assumed that $\lambda_i(u,\omega)\, (i=1,\dots,n)$ are smooth with respect to $u$ and $\omega$, while $l_i(u,\omega),\, r_i(u,\omega)\, (i=1,\dots,n)$ are {all smooth functions of $u$ for any given $\omega \in \mathbb{S}^{d-1}$}.
One may normalize the eigenvectors so that
\begin{equation}
l_i(u,\omega) r_{i'}(u,\omega) \equiv \delta_{ii'}, \quad   i, i'=1,\dots, n. \label{1.4}
\end{equation}
It is well known that for any given suitably smooth initial data $u_0$, the system \eqref{1.1_Sys} has a unique local smooth solution \cite{kato1975local,majda1984local},
but in general the solution develops singularities in finite time even for small and smooth initial data \cite{dafermos2005book,li1994bookglobal,majda1984local}.

{It has been known that there are two structures that can prevent the finite-time singularity formation.
{\color{black} One of them} is the (weak) linear degeneracy for the homogeneous systems, which was firstly introduced in 1D study {\color{black}(see \cite{bressan1988global} and \cite{li1994bookglobal})}.
The other one is the dissipation effect induced by {\color{black}inhomogeneous terms}.
A well-known assumption in this aspect is given by the strict dissipation, which requires a {\color{black}damping term} to enter into each of the equations of the system, {\color{black} see \cite{li1994bookglobal}}.
Notice that for some physical problems, this condition is too strong, while in many applications and as the interest in this paper $Q(u)$ takes the form
\begin{equation} \label{1.2}
Q(u) = (0,\dots, 0, Q_{r+1}(u), \dots, Q_{n}(u))^T,
\end{equation}
which seems to produces the dissipation effect only in the last $n-r$ equations.} In this case, the system \eqref{1.1_Sys} is referred to as a partially dissipative hyperbolic system in many literatures, {\color{black} such as \cite{beauchardzuazua2011ARMA,bianchini2007CPAM,hanouzetnatalini2003ARMA,yong2004ARMA}}.

A constant vector $u_\ast\in \mathbb{R}^n$ is called an equilibrium state of the system \eqref{1.1_Sys} {\color{black}if $S(u_\ast)=0$, or equivalently, $Q(u_\ast)=0$}.
The main subject of this paper is to search for suitable structural conditions, under which \eqref{1.1_Sys} has a unique global smooth solution near $u_e$.
Without loss of generality, one may take $u_\ast=0$, $i.e.$, $Q(0)=0$. {\color{black}One may also suppose that $G(0) = 0$ and $\nuu G(0) = I_n$.}
A useful assumption (see \cite{yong2004ARMA}) is the following entropy dissipation condition:
\begin{itemize}
\item[\asmp{A1}] The matrix $\left( \frac{\partial Q_p}{\partial u_{p'}}(0)\right)_{p,p'=r+1}^n$ is invertible.
\item[\asmp{A2}] {\color{black}There exist a strictly convex smooth entropy function $\bar\eta(G)$ and $d$ smooth entropy flux functions $\bar\psi^k(G)$ $(k=1,\dots,d)$ for the system \eqref{1.1_Sys} in terms of $G$, such that for all $G$,
\[
 (\nabla_G \bar \psi^k (G))^T = (\nabla_G \bar \eta(G))^T \nabla_G (F^k(u(G))).
\]
This implies that if one defines ${\eta}(u) = \bar\eta(G(u)) - \bar\eta(0) - \nabla_G \bar\eta(0) \cdot G(u)$ and ${\psi}^k(u) = \bar\psi^k(G(u)) - \nabla_G \bar\eta(0) \cdot F^k(u)$, then $\eta(0) = 0$, $\nuu \eta(0) = 0$, $\nabla_{u}^2 \eta(0)$ is positive definite and}
\begin{equation}
{\color{black}(\nuu \psi^k(u))^T = (\nuu \eta(u))^T A^k(u), \hs k=1,\dots,d.}
\end{equation}
\item[\asmp{A3}] There exists a positive constant $c_e > 0$ such that for all $u$,
\begin{equation} \label{1.9_EnDis}
{ \nuu \eta(u)  \cdot Q(u) \leq -c_e |Q(u)|^2.}
\end{equation}
\end{itemize}
It is clear that the entropy dissipation condition \asmp{A1}--\asmp{A3} is too weak to prevent the formation of singularities,
for instance, a decoupled system composed of an inviscid Burgers' equation and a damped linear equation satisfies \asmp{A1}--\asmp{A3},
but generally would develop singularities in a finite time for non-degenerate initial data.

A natural supplementary coupling condition, the Shizuta--Kawashima stability condition, or ``Kawashima condition'' for short which was first formulated in \cite{kawashima1984thesis,shizutakawashima1985systems} for the hyperbolic-parabolic systems with a source term, was introduced in \cite{hanouzetnatalini2003ARMA,yong2004ARMA}.
This coupling condition implies that due to the wave propagation,
the partial dissipation term indeed produces a kind of complete dissipation effect with respect to all the components of the solution, which yields the global existence of the small smooth solutions.
The Kawashima condition has several equivalent formulations, and one of them reads as
\begin{itemize}
\item[\asmp{K}] The kernel of the Jacobian $\nuu Q(0)$ contains no eigenvector of $A(0,\omega)$ for any $\omega \in \mathbb{S}^{d-1}$.
\end{itemize}
Under the assumptions \asmp{A1}--\asmp{A3} and \asmp{K}, \cite{hanouzetnatalini2003ARMA,yong2004ARMA} prove the global smooth solutions for small initial data.
{\color{black} See also \cite{beauchardzuazua2011ARMA,bianchini2007CPAM,kawashimayong2004ARMA,kawashimayong2009ZAA,ruggeriserre2004stability} for more discussions and
\cite{dafermos2013JDE} for the result on global entropy solutions to systems satisfying Kawashima condition with zero mass initial data}.

It should be pointed out that although the name partial dissipation is used in this case, {all the components of the solution are dissipative due to the Kawashima coupling condition \asmp{K}.
On the other hand, any other kind of partial dissipations that certain part of the solution may not be linearly affected by the dissipation and hence is non-dissipative,} which can be represented by the violation of the Kawashima condition \asmp{K}, is an interesting structure to study (see remarks in \cite{bianchini2007CPAM,mascianatalini2010ARMArelaxation,yong2004ARMA}).
In fact, there are many physical models that the Kawashima condition \asmp{K} is violated, and the eigen-family that violates it is linearly degenerate, such as {the damping full Euler system of adiabatic flow} (see, for instance, {\color{black}\cite{hsiaoserre1996global,tan2013global})} and {\color{black}the thermal non-equilibrium flow (see \cite{zeng1999gas,zeng2010gas,zeng2015gas}).}
Some works have discussed several aspects on the system violating the condition \asmp{K}, such as \cite{mascianatalini2010ARMArelaxation} for special structures and {\color{black}\cite{liuqu2012JMPA}} for partial strict dissipations in 1D. 

In this paper we will continue the study on the system \eqref{1.1_Sys} {with $d\ge 2$ and $r\ge 2$ (and hence $n\ge3$) in \eqref{1.2} violating \asmp{K}. More precisely, we assume}
\begin{itemize}
\item[\asmp{A4}] For each $\omega \in \mathbb{S}^{d-1}$ one and only one family of right eigenvectors of $A(0,\omega)$ are not contained in the kernel of the Jacobian $\nuu Q(0)$, that is, possibly after relabeling,
\begin{equation} \label{1.10_Kaw}
\nuu Q(0) r_j(0,\omega) \neq 0,\hs j=2,\dots,n,
\end{equation}
but
\begin{equation}
\nuu Q(0) r_1(0,\omega) = 0.
\end{equation}
\end{itemize}
One may refer to the assumption \asmp{A4} that the last $n-1$ right eigenvectors satisfy the Kawashima condition and the first right eigenvector violates it.
Note that \asmp{A1}--\asmp{A4} is still too weak to prevent the formation of singularities, and some additional structural conditions are needed in order to guarantee the global existence of the small smooth solutions.
The goal of this paper is to identify one set of such conditions and prove the corresponding results.

For notational simplicity, $L^p$ and $H^s$ are used to denote the usual $L^p$ and Sobolev spaces on $\mathbb{R}^d$.
The notation $\Lambda ^{s }$ is defined by $\Lambda^s f :=  (|\xi|^s  \hat{f})^\vee $, where $\hat{\cdot}$ is the Fourier transform and $(\cdot)^\vee$ is its inverse. $C$ denotes the positive constants, whose value may change from line to line,
and $f \lesssim g$ is used for $f \le C g$.


\section{Main results} \label{sec:2+}

As mentioned in Section \ref{intro sec}, the aim of this paper is to impose certain structural conditions in supplement with the assumptions \asmp{A1}--\asmp{A4} so that the system \eqref{1.1_Sys} admits a unique global smooth solution for the small initial data.
First, for the technical point and as suggested by many physical applications, one may assume
\begin{itemize}
\item[\asmp{B}] The first family of eigenvectors satisfy an isotropic condition:
\begin{equation}\label{1.12_iso_r}
l_1(u,\omega) \equiv   l_1(u) \text{ and }
r_1(u,\omega) \equiv   r_1(u), \hs \forall\, \omega \in \mathbb{S}^{d-1}.
\end{equation}
\end{itemize}
In the spirit of the normalized coordinates method developed in {\color{black}\cite{li1994bookglobal,lizhoukong1994global}} for the one dimensional quasilinear hyperbolic systems,
thanks to the isotropy of $r_1$ in \eqref{1.12_iso_r}, the partially normalized coordinates can be introduced as follows.
After the linear transformation performed in Appendix \ref{linear trans}, let
$\utra{s}{u^0}$ be the $1$st characteristic trajectory passing through the point $u^0$, which is defined by
\begin{equation} \label{1.13}
\begin{cases}
\displaystyle \frac{\rmd}{\rmd s} \utra{s}{u^0} = r_1(\utra{s}{u^0}), \\
\utra{0}{u^0} = u^0.
\end{cases}
\end{equation}
For each point $\tilde{u} = (\tilde{u}_1, \dots, \tilde{u}_n)^T$, map it to $u = (u_1, \dots, u_n)^T$ by
\begin{equation} \label{2.5_trans}
u = u(\tilde u) := \utra{\tu_1}{(0,\tu_2, \dots, \tu_n)^T}.
\end{equation}
It will be verified in Section \ref{sec:2} that the mapping \eqref{2.5_trans} is a local diffeomorphism near $\tu=0$. This then allows to define the partially normalized coordinates $\tu$ by the inverse mapping $\tu=\tu(u)$ for each point $u$ near $u(0)=u^{(1)}(0,0)=0$ in the phase space, and hence the system \eqref{1.1_Sys} can be equivalently reformulated in terms of $\tu$.
This reformulated system will be studied in details in Section \ref{sec:2}.
Throughout the paper the following notational convention is applied for vectors: for a vector $f=(f_1,\dots,f_n)^T\in \mathbb{R}^n$, denote
\begin{equation}\label{jjkk}
f^\flat =\left(f_{2},\dots,f_n \right)^T\in \mathbb{R}^{n-1},\  f^\mathtt{C}=\left(f_{2},\dots,f_r \right)^T\in \mathbb{R}^{r-1}\text{ and } f^\mathtt{D} =\left(f_{r+1},\dots,f_n \right)^T\in \mathbb{R}^{n-r}.
\end{equation}
As one shall see, the decomposition \eqref{jjkk} is closely related to the structure of the system \eqref{1.1_Sys} under assumptions \asmp{A1}--\asmp{A4} and \asmp{B}.

Since the assumption \asmp{A4} implies that the first right eigenvector violates the Kawashima condition, the natural way is to supplement certain degeneracy conditions with respect to the first eigen-family.
One may use the following weak degeneracy conditions:
\begin{itemize}
\item[\asmp{WD1}] $Q(u)$ is weakly degenerate along the first characteristic trajectory that passes through $0$:
\[
Q(u^{(1)}(s,0)) \equiv 0, \hs \forall\, \abs{s} \text{ small},
\]
which is equivalent to
\begin{equation}\label{1.12_VioKaw_w}
\left. \left( \nuu Q(u) r_1(u) \right) \right|_{u=\utra{s}{0}} \equiv 0,\quad \forall\, |s|\text{ small}.
\end{equation}
\item[\asmp{WD2}] The first eigen-family is weakly linearly degenerate {\color{black}(see \cite{li1994bookglobal,lizhoukong1994global})}:
\[
\lambda_1(u^{(1)}(s,0),\omega) \equiv \lambda_1(0,\omega), \hs \forall\, \abs{s} \text{ small},\ \forall\, \omega \in \mathbb{S}^{d-1},
\]
which is equivalent to
\begin{equation} \label{1.13_LD}
\left. \left( \nuu \lambda_1(u,\omega) \cdot r_1(u) \right) \right|_{u = \utra{s}{0}}  \equiv 0,\quad \forall\, |s|\text{ small},\ \forall\,\omega \in \mathbb{S}^{d-1} .
\end{equation}
\end{itemize}
It is easy to check that the conditions \asmp{WD1}--\asmp{WD2} are necessary
since if any one of them is missing, the solution generally will blow up in finite time.
The typical examples can be constructed in the spirit of the Riccati equation and the Burgers' equation (see \cite{li1994bookglobal} and \cite{liuqu2012JMPA} for instance).


The first result of this paper is that under the assumptions \asmp{A1}--\asmp{A4}, \asmp{B} and \asmp{WD1}--\asmp{WD2}, the system \eqref{1.1_Sys} admits a unique global solution for each suitably smooth and small initial data.

\begin{thm} \label{thm:1.1}
Suppose that the assumptions \asmp{A1}--\asmp{A4}, \asmp{B} and \asmp{WD1}--\asmp{WD2} hold.
Let $d \geq 3$, $\ell>d/2+1$ and $1\le p\le 2,\ 1\le p<d/2 $. Denote $p^\ast=\min\{2, {dp }/({d-p })\}$ and $ s_1 ^\ast= d(1-1/p )+1$.

There exists a sufficiently small $\varepsilon_0>0$ such that if
\begin{equation} \label{K00}
K_0:=\norm{u_0}_{H^\ell}+\norm{   \tuc_0 }_{L^{p }}+\norm{\tud_0 }_{L^{p^\ast}} \leq \varepsilon_0,
\end{equation}
then there exists a unique global solution $u(t,x)$ to the system \eqref{1.1_Sys} satisfying
\begin{equation}\label{th11}
\norm{u(t)}_{H^\ell}^2 + \int_0^t \left( \norm{\tud(\tau)}_{H^\ell}^2+\norm{\Lambda  \tuc(\tau)}_{H^{\ell-1}}^2 \right) \rmd \tau \le  CK_0^2.
\end{equation}
Moreover, for all $s\in [ 0,   s_1^\ast]\cap [0,d/2)$,
\begin{equation}
\norm{\lds\tuc(t) }_{H^{\ell-s}}\le  C K_0(1+t)^{-\frac{d}{2}\left(\frac{1}{p}-\frac{1}{2}\right)-\frac{s}{2} }
\end{equation}
and for all $s\in [ 0,   s_1^\ast-1]\cap [0,d/2-1)$,
\begin{equation}\label{th13}
\norm{\lds\tud(t) }_{H^{\ell-s}}\le  C K_0(1+t)^{-\frac{d}{2}\left(\frac{1}{p}-\frac{1}{2}\right)-\frac{s}{2}-\frac{1}{2} }.
\end{equation}
\end{thm}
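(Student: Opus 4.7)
The plan is to work entirely in the partially normalized coordinates $\tu=\tu(u)$ introduced in \eqref{2.5_trans}, where the structural consequences of \asmp{WD1}--\asmp{WD2} become transparent: along $\tub=0$ one has $Q(u(\tu))=0$ and $\lambda_1(u(\tu),\omega)\equiv\lambda_1(0,\omega)$, so that in the rewritten system every ``bad'' term in the $\tu_1$-equation carries at least one factor of $\tub$, and the source $\tq(\tu)$ factors as a smooth tensor contracted with $\tub$. The first step is to perform the reformulation announced in Section \ref{sec:2}, identify the block decomposition \eqref{jjkk} compatible with $\nuu Q(0)$ (with $\tud$ the directly damped block and $\tuc$ the Kawashima-coupled block), and record the quadratic-or-higher vanishing of the nonlinearities that the weak degeneracy supplies.

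Given these structural facts, I would prove a family of scaled energy estimates. Let
\[
\mathcal{E}_\ell(t)=\norm{u(t)}_{H^\ell}^2,\qquad \mathcal{D}_\ell(t)=\norm{\tud(t)}_{H^\ell}^2+\norm{\Lambda\tuc(t)}_{H^{\ell-1}}^2.
\]
The high-order estimate uses the symmetrizer coming from \asmp{A2}: differentiate \eqref{sys} in $\tu$-coordinates, test against the Hessian of $\eta$, and use \asmp{A3} to obtain direct $L^2$-dissipation of $\tud$ to order $\ell$. To recover the fractional dissipation $\norm{\Lambda\tuc}_{H^{\ell-1}}$, I would build a Kawashima-type compensating functional: since \asmp{A4} guarantees $\nuu Q(0)r_j(0,\omega)\neq 0$ for $j\geq 2$, one constructs skew-symmetric $K(\omega)$ for which $\tfrac12(K A+(K A)^T)+\nuu Q(0)^T\nuu Q(0)$ is positive on the $\tub$-subspace; adding $\langle\Lambda^{-1}K(\na/|\na|)\na\tub,\tub\rangle$ to the basic energy yields $\|\Lambda\tub\|_{H^{\ell-1}}^2$ in the dissipation, from which the $\tuc$-piece survives. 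The key point is that the $\tu_1$-block of $K$ is inactive (by \asmp{A4} it must be), yet we do not need dissipation on $\tu_1$: because of \asmp{WD1}--\asmp{WD2}, the $\tu_1$-equation is of the form $\dt\tu_1+\lambda_1(0,\omega)\cdot\text{(transport)}+\tu_1\cdot \mathcal{O}(\tub)+\mathcal{O}(\tub^2)=0$, so all nonlinear contributions in the top-order energy identity can be absorbed by $\mathcal{D}_\ell(t)$ after integration by parts and Moser/commutator estimates, giving
\[
\ddt\mathcal{E}_\ell+c\,\mathcal{D}_\ell\lesssim \sqrt{\mathcal{E}_\ell}\,\mathcal{D}_\ell.
\]

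For the decay \eqref{th13} the strategy is a Duhamel-in-Fourier argument. Linearizing the $\tub$-subsystem in $\tu$-coordinates around $\tu=0$ yields a symmetric-hyperbolic operator to which Kawashima's pointwise frequency estimate applies, giving $e^{-c|\xi|^2/(1+|\xi|^2)t}$ on $\tub$ and the extra $|\xi|^2/(1+|\xi|^2)$ factor on $\tud$; for $\tu_1$ the linearized dynamics is pure transport with no decay, but $\tu_1$ does not appear on the left-hand side of \eqref{th13}. Using the small but weaker initial bound $\norm{\tuc_0}_{L^p}+\norm{\tud_0}_{L^{p^\ast}}\leq\varepsilon_0$ from \eqref{K00}, I would split low/high frequencies, employ $L^p\to L^2$ heat-kernel type estimates on the low part, and on the high part use the $H^\ell$-energy bound \eqref{th11} to absorb $\Lambda^s$ costs. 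A time-weighted bootstrap, defining
\[
\mathcal{M}(T)=\sup_{0\leq t\leq T}\!\!\!\sum_{0\leq s\leq s_1^\ast}\!\!(1+t)^{\frac{d}{2}(\frac{1}{p}-\frac{1}{2})+\frac{s}{2}}\norm{\lds\tuc(t)}_{H^{\ell-s}}+\!\!\!\!\sum_{0\leq s\leq s_1^\ast-1}\!\!\!\!(1+t)^{\frac{d}{2}(\frac{1}{p}-\frac{1}{2})+\frac{s}{2}+\frac{1}{2}}\norm{\lds\tud(t)}_{H^{\ell-s}},
\]
reduces everything to showing $\mathcal{M}(T)\lesssim K_0+\mathcal{M}(T)^2$, where the nonlinearity is controlled using the factorized form of $\tq$ (one factor of $\tub$ free for better $L^2$ or $L^\infty$ distribution) and $L^p$ interpolation.

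The main obstacle I expect is the top-order energy treatment of the $\tu_1$ block: since there is no coercive dissipation on $\tu_1$, every commutator produced by localizing $\p^\alpha$ on the nonlinear $\tu_1$-equation must be rerouted into $\|\Lambda\tub\|_{H^{\ell-1}}^2$, which is only a fractional gain; closing this requires exploiting the precise structural vanishing from \asmp{WD1}--\asmp{WD2} (not just the linearization), combined with the isotropy \asmp{B} so that the transport part of the $\tu_1$-equation is genuinely a divergence up to terms quadratic in $\tub$. A secondary technical obstacle is that $\ell$ is taken just above $d/2+1$, so Moser-type product estimates must be run at minimum regularity and the $\Lambda^s$ decay must be obtained with $s$ possibly fractional, which is why the statement restricts to $s\in[0,s_1^\ast]\cap[0,d/2)$; this forces the use of the Kato--Ponce and Bessel-potential tools rather than pure integer Leibniz.
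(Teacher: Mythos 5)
Your overall scaffolding (partially normalized coordinates, entropy symmetrizer, Kawashima compensator for $\tub$, Duhamel plus time--weighted bootstrap for the decay) matches the paper's, but there are two genuine gaps in the part that actually closes the argument.

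First, the claimed differential inequality $\ddt\mathcal{E}_\ell+c\,\mathcal{D}_\ell\lesssim\sqrt{\mathcal{E}_\ell}\,\mathcal{D}_\ell$ is not attainable, and if it were, Theorem \ref{thm:1.1} would hold with $p=2$ for all $d\ge3$, contradicting the hypothesis $p<d/2$ (which forces $p<2$ for $d=3,4$). The obstruction sits exactly where you flag your "main obstacle": in the $\tu_1$-block the nonlinearities supplied by \asmp{WD1}--\asmp{WD2} are of the form $O(1)\tu\,\tub$, so the top-order energy identity produces terms like $\int\Lambda^\ell\tu_1\cdot\tud\,\Lambda^\ell\tu_1$, of size $\mathcal{E}_\ell\,\norm{\tub}_{L^\infty}\sim\mathcal{E}_\ell\sqrt{\mathcal{D}_\ell}$ --- only \emph{one} power of a dissipative quantity. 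After time integration this gives $\sup_t\mathcal{E}_\ell\int_0^t\sqrt{\mathcal{D}_\ell}\,\rmd\tau$, and $\int\sqrt{\mathcal{D}_\ell}$ is not controlled by $\int\mathcal{D}_\ell$. The paper's full energy estimate (Proposition \ref{energy energy}) therefore stops at
$\norm{\tu(t)}_{H^\ell}\lesssim\norm{\tu_0}_{H^\ell}+\int_0^t\bigl(\norm{\tub}_{L^\infty}+\cdots\bigr)\rmd\tau$,
and the time integral is closed \emph{only} by first proving the decay rates of $\tub$ (Proposition \ref{decay decay}), which is where the $L^p$, $p<d/2$ assumption enters: one needs $\norm{\Lambda^{\tilde{s}_1}\tub}_{H^{\ell-\tilde{s}_1}}\sim(1+t)^{-\sigma_p-\tilde{s}_1/2}$ with $\sigma_p+\tilde{s}_1/2>1$ to make the integral converge. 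Your architecture --- energy closes first, decay is an add-on --- has the logical dependence backwards; the two must be run as a coupled bootstrap.

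Second, you do not address the loss of derivative in the $\tu_1$-equation \eqref{2.45_Sys_1}: the cross term $\sum_k\widetilde{A}^{k,\sharp}(\tu)\tub_{x_k}$ at order $\Lambda^\ell$ requires $\ell+1$ derivatives of $\tub$ (or, after integration by parts, of $\tu_1$), neither of which is available, and this term is not skew so it cannot be symmetrized away. The paper's fix is to abandon direct estimates on $\tu_1$ and instead estimate $\Lambda^{\ell-1}\tilde{w}_{1,k'}$ where $\tilde{w}_{1,k'}=\tilde{l}_1(\tu)\tu_{x_{k'}}$ is the first wave component; using \asmp{B} one derives a genuine transport equation \eqref{hhhhh11122} for $\tilde{w}_{1,k'}$ from the full system \eqref{2.12}, in which every right-hand term carries a factor of $\tub$ or $\nabla\tub$. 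Without this (or an equivalent device) the top-order estimate for $\tu_1$ does not close at the stated regularity $\ell>d/2+1$.
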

\begin{remark}
Note that if $d\ge 5$, then one can take $p=2$ in Theorem \ref{thm:1.1} which yields the unique global solution to the system \eqref{1.1_Sys} for the small initial data in $H^\ell$ with $\ell>d/2+1$; but if $d=3$ or $4$, in order to obtain the global solutions one needs the low frequency assumption of the initial data as stated in \eqref{K00} with $1\le p<d/2$ (if $p\ge 2d/(d+2)$ and hence $p^\ast=2$, then one needs only the low frequency assumption of $\tuc_0$).
\end{remark}

To improve the result of Theorem \ref{thm:1.1}, one may use the following stronger degeneracy conditions:
\begin{itemize}
\item[\asmp{D1}] $Q(u)$ is degenerate along \emph{each} first characteristic trajectory:
\begin{equation}\label{1.14_VioKaw}
\nuu Q(u) r_1(u) \equiv 0,\quad \forall\,u.
\end{equation}
\item[\asmp{D2}] The first eigen-family is linearly degenerate (see \cite{lax1957hyperbolic}):
\begin{equation} \label{1.15_LD}
\nuu \lambda_1(u,\omega) \cdot r_1(u) \equiv 0,\quad \forall\,u,\ \forall\, \omega \in \mathbb{S}^{d-1}.
\end{equation}
\end{itemize}
Then the second result of this paper can be stated as follows.

\begin{thm} \label{thm:1.2}
Suppose that the assumptions \asmp{A1}--\asmp{A4}, \asmp{B} and \asmp{D1}--\asmp{D2} hold.
Let $d \geq 3$, $\ell>d/2+1$ and $1\le p\le 2$, require further that $p < 2d/(2(3-\ell)+d)$ if $\ell\le 3$.
Denote $ s_2^\ast=\min\{d(1-1/p)+1,\ell-1\}$.

There exists a sufficiently small $\varepsilon_0>0$ such that if
$K_0 \leq \varepsilon_0$,
then there exists a unique global solution $u(t,x)$ to the system \eqref{1.1_Sys} satisfying
\begin{equation}\label{th21}
\norm{u(t)}_{H^\ell}^2 + \int_0^t \left( \norm{\tud(\tau)}_{H^\ell}^2+\norm{\Lambda  \tuc(\tau)}_{H^{\ell-1}}^2 \right) \rmd \tau \le  CK_0^2.
\end{equation}
Moreover, for all $s\in [ 0,   s_2^\ast]\cap [0,d/2+1)$,
\begin{equation}
\norm{\lds\tuc(t) }_{H^{\ell-s}}\le  C K_0(1+t)^{-\frac{d}{2}\left(\frac{1}{p}-\frac{1}{2}\right)-\frac{s}{2} }
\end{equation}
and for all $s\in [ 0,   s_2^\ast-1]\cap [0,d/2 )$,
\begin{equation}\label{th23}
\norm{\lds\tud(t) }_{H^{\ell-s}}\le  C K_0(1+t)^{-\frac{d}{2}\left(\frac{1}{p}-\frac{1}{2}\right)-\frac{s}{2}-\frac{1}{2} }.
\end{equation}
\end{thm}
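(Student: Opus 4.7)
The plan is to follow the same scheme as for Theorem~\ref{thm:1.1}, working in the partially normalized coordinates $\tu$ of Section~\ref{sec:2}, but now exploiting the much stronger cancellations afforded by \asmp{D1}--\asmp{D2} to relax the low-frequency assumption and to extend the admissible range of $s$ in the decay rates by one. Specifically, under \asmp{D1} the source $Q$ vanishes identically along \emph{every} first characteristic trajectory (not only the one through $0$), and under \asmp{D2} the first eigenvalue $\lambda_1$ is independent of $u$, so the characteristic speed in the $\tu_1$--equation is constant and no $\tu_1\p\tu_1$ self-interaction survives. Together, these ensure that every nonlinear source in the reformulated system carries at least one factor of $\tub=(\tuc,\tud)$, which is the structural feature that drives the improved estimates.

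\textbf{Energy estimate.} First I would set up a scaled energy--dissipation pair of the form
\begin{equation*}
\mathcal{E}(t)\sim \norm{u(t)}_{H^\ell}^2, \hs \mathcal{D}(t)\sim \norm{\tud(t)}_{H^\ell}^2+\norm{\Lambda\tuc(t)}_{H^{\ell-1}}^2,
\end{equation*}
where the direct dissipation on $\tud$ comes from the relaxation source through \asmp{A1}--\asmp{A3}, and the gradient dissipation on $\tuc$ is recovered through an interactive Kawashima-type correction built from the $j\ge 2$ eigen-families, which do satisfy the coupling condition in view of \asmp{A4}. The crucial observation is that \asmp{D1}--\asmp{D2} force every commutator and nonlinear remainder arising when testing the $\tu_1$--equation against $\lds\tu_1$ to contain at least one derivative of $\tub$, so that all such terms can be absorbed into $\sqrt{\mathcal{E}}\,\mathcal{D}$. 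This yields a differential inequality $\frac{\rmd}{\rmd t}\mathcal{E}+\mathcal{D}\le C\sqrt{\mathcal{E}}\,\mathcal{D}$, giving an \emph{a priori} uniform energy bound as long as $\mathcal{E}$ remains small.

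\textbf{Decay and bootstrap.} For the decay rates, I would decompose the solution into the linearized evolution and the nonlinear Duhamel remainder, and apply $L^p$--$L^2$ estimates for the linearized partially dissipative semigroup around $u=0$, which give the rate $(1+t)^{-\frac{d}{2}(1/p-1/2)-s/2}$ for $\lds\tuc$ and an extra $(1+t)^{-1/2}$ factor for $\lds\tud$ from the parabolic-type smoothing on the Kawashima-coupled part. The Duhamel integral is then estimated by bootstrapping the presumed decay back into the nonlinearity: since every nonlinear source is, by the structural consequence of \asmp{D1}--\asmp{D2} described above, bilinear of the form $\tub\cdot(\text{derivatives of }u)$, each integrand carries at least one $L^2$--decaying factor, and the range of $s$ in Theorem~\ref{thm:1.2} can be closed without the $p<d/2$ restriction imposed in Theorem~\ref{thm:1.1}. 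The side condition $p<2d/(2(3-\ell)+d)$ for $\ell\le 3$ enters exactly when one wants the highest Sobolev index appearing in the Duhamel bound to stay below the basic energy regularity carried by $K_0$.

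\textbf{Main obstacle.} The hardest point is closing the decay for the non-dissipative variable $\tu_1$ near the upper end $s\nearrow s_2^\ast$. Unlike in Theorem~\ref{thm:1.1}, where only the origin-vanishing \asmp{WD1}--\asmp{WD2} are assumed and the admissible range stops at $d/2$, here one must extract from the global identities $Q(u^{(1)}(s,0))\equiv 0$ and $\lambda_1\equiv\text{const}$ the sharper statement that the nonlinear source driving $\tu_1$ is at least bilinear in $\tub$ rather than merely quadratic in $u$. Proving this requires a careful Taylor expansion of $Q(u(\tu))$ and of the coefficient matrices $A^k(u(\tu))$ in the $\tu_1$--direction, and the resulting cancellations must then be propagated through the commutator and product estimates in the Duhamel bootstrap so that every nonlinear contribution contains the dissipative factor needed to make the time integral converge at the improved rate.
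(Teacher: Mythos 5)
Your overall architecture (partially normalized coordinates, entropy/Kawashima dissipation for $\tub$, Duhamel bootstrap for the decay rates) matches the paper's, but the central step of your energy estimate contains two genuine gaps. First, testing the $\tu_1$--equation \eqref{2.45_Sys_1} against $\Lambda^\ell\tu_1$ loses a derivative: the coupling term $\sum_k\widetilde{A}^{k,\sharp}(\tu)\tub_{x_k}$ produces at top order $\norm{\Lambda^{\ell+1}\tub}_{L^2}$, which is not controlled by the dissipation $\norm{\tud}_{H^\ell}^2+\norm{\Lambda\tuc}_{H^{\ell-1}}^2$, and integrating by parts merely shifts the extra derivative onto $\tu_1$, which carries no dissipation at all. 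The paper circumvents this by returning to the full system and performing the $\Lambda^{\ell-1}$ estimate on the wave component $\tilde w_{1,k'}=\tilde l_1(\tu)\tu_{x_{k'}}$ from the decomposition \eqref{4.32++}, whose evolution equation \eqref{hhhhh111} has a pure transport principal part; your proposal is silent on this, and without such a device the top-order estimate does not close.

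Second, the claimed differential inequality $\frac{\rmd}{\rmd t}\mathcal{E}+\mathcal{D}\le C\sqrt{\mathcal{E}}\,\mathcal{D}$ is not attainable. Under \asmp{\td 1} the source driving the non-dissipative component is of the form $O(1)\tu\,\tud+O(1)\tu\tub\tub$, so the estimate leaves terms such as $\norm{\tud}_{L^\infty}\norm{\tu}_{H^\ell}^2$ and $\norm{\Lambda^\ell\tub}_{L^2}\norm{\tu}_{H^\ell}$, which are of size $\sqrt{\mathcal{D}}\,\mathcal{E}$ rather than $\sqrt{\mathcal{E}}\,\mathcal{D}$ and cannot be absorbed, since $\mathcal{E}$ is not dominated by $\mathcal{D}$. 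This is precisely why Proposition \ref{energy energy}(ii) leaves the bound in the integrated form \eqref{energy es for global112} and the proof must then establish that $\norm{\tud}_{L^\infty}+\norm{\tub}_{L^\infty}^2+\norm{\Lambda^{\ell-1}\tub}_{L^2}+\norm{\Lambda^\ell\tub}_{L^2}$ is integrable in time via the decay estimates of Proposition \ref{decay decay}(ii): the decay analysis is essential to closing the energy bound, not an add-on, and it is the $(1+t)^{-d/4-1/2+\varepsilon}$ decay of $\norm{\tud}_{L^\infty}$ that forces $d\ge3$ --- a restriction your scheme cannot account for. A smaller but real error: \asmp{D2} does not say that $\lambda_1$ is independent of $u$; it says $\nuu\lambda_1\cdot r_1\equiv0$, i.e.\ $\partial\tilde\lambda_1/\partial\tu_1\equiv0$ in the normalized coordinates, so $\tilde\lambda_1$ still depends on $\tub$ and the transport terms contribute $\norm{\nabla\tub}_{L^\infty}$--type factors rather than vanishing.
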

\begin{remark}
Note that now one can take $p=2$ in Theorem \ref{thm:1.2} for any $d\ge 3$ which yields the unique global solution to the system \eqref{1.1_Sys} for the small initial data in $H^\ell$ with $\ell > \max\{d/2+1,3\}$ and without any low frequency assumption of the initial data.
\end{remark}

Note that the result of Theorem \ref{thm:1.2} excludes the case $d=2$. To obtain the global solution to the system \eqref{1.1_Sys} in two dimensions, one more assumption should be made on the structure of the system:
\begin{itemize}
\item[\asmp{D3}] The first eigen-family gets no linear effect from the source term:
\begin{equation} \label{C3}
l_1(0) \nuu Q(0) = 0.
\end{equation}
\end{itemize}
And the third result of this paper is stated as follows.

\begin{thm} \label{thm:1.3}
Suppose that the assumptions \asmp{A1}--\asmp{A4}, \asmp{B}, \asmp{D1}--\asmp{D2} and \asmp{D3} hold.
Let $d \geq 2$, $\ell>d/2+1$ and $1\le p\le q\le 2,\ 1\le p\le q<d $, require further that $p < 2d/(2(3-\ell)+d)$ if $\ell\le 3$.
Denote $  s_3^\ast=\min\{d(1/2+1/q-1/p)+1, d(1-1/p)+2,\ell-1\}$.

There exists a sufficiently small $\varepsilon_0>0$ such that if
\begin{equation}
\widetilde{K}_0:=\norm{u_0}_{H^\ell}+\norm{   \tuc_0 }_{L^{p}}+\norm{\tud_0 }_{L^{p^\ast }} + \norm{\tu_{0,1}}_{L^{q }} \leq \varepsilon_0,
\end{equation}
then there exists a unique global solution $u(t,x)$ to the system \eqref{1.1_Sys} satisfying
\begin{equation}\label{th31}
\norm{u(t)}_{H^\ell}^2 + \int_0^t \left( \norm{\tud(\tau)}_{H^\ell}^2+\norm{\Lambda  \tuc(\tau)}_{H^{\ell-1}}^2 \right) \rmd \tau \le  C\widetilde{K}_0^2.
\end{equation}
Moreover, for all $s\in [ 0,   s_3^\ast]\cap [0,d/2+1)$,
\begin{equation}
\norm{\lds\tuc(t) }_{H^{\ell-s}}\le  C \widetilde{K}_0(1+t)^{-\frac{d}{2}\left(\frac{1}{p}-\frac{1}{2}\right)-\frac{s}{2} }
\end{equation}
and for all $s \in [ 0,   s_3^\ast -1]\cap [0,d/2 )$,
\begin{equation}\label{th33}
\norm{\lds\tud(t) }_{H^{\ell-s}}\le  C \widetilde{K}_0(1+t)^{-\frac{d}{2}\left(\frac{1}{p}-\frac{1}{2}\right)-\frac{s}{2}-\frac{1}{2} }.
\end{equation}
\end{thm}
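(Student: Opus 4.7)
My plan is to follow the overall architecture of the proof of Theorem~\ref{thm:1.2} (partially normalized coordinates, a closed $H^\ell$ energy estimate, and a negative Sobolev / scaled $\Lambda^s$ decay hierarchy), and insert one additional ingredient that exploits the new structural assumption \asmp{D3} to make everything work down to $d=2$. First I would rewrite \eqref{1.1_Sys} in the partially normalized coordinates $\tu$ introduced in Section~\ref{sec:2} and localize the analysis near the equilibrium. Under \asmp{A1}--\asmp{A4}, \asmp{B}, \asmp{D1}--\asmp{D2}, the resulting system splits into a transport-type equation for $\tu_1$ (whose nonlinearity vanishes along $\{\tub=0\}$ and contains no $\tu_1$-quadratic term thanks to \asmp{D1}--\asmp{D2}), a partially dissipative block for $\tuc$ (dissipative only at positive frequencies, via the Kawashima coupling for $\tu^\flat$), and a fully damped block for $\tud$.

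The second step is to recycle verbatim the top-order energy estimate used for Theorem~\ref{thm:1.2}: the $H^\ell$-coercivity of the perturbed entropy $\eta(u)$ together with the Kawashima compensator for $\tu^\flat$ gives \eqref{th31} as soon as the solution stays in a fixed small ball in $H^\ell$, with $\widetilde{K}_0$ replacing $K_0$. The restriction $p<2d/(2(3-\ell)+d)$ if $\ell\le 3$ enters exactly as in Theorem~\ref{thm:1.2} to handle the nonlinear commutator estimates in the high-order energy. I would not redo this calculation from scratch; I would state it as an a~priori bound consequence of the framework developed for Theorem~\ref{thm:1.2}.

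The third and decisive step is the decay analysis, where \asmp{D3} is used. Multiplying the reformulated system on the left by $l_1(0)$ and using \eqref{C3}, the linearization of the $\tu_1$ equation is pure transport $\dt \tu_1+\lambda_1(0,\nabla)\tu_1=0$ with \emph{no} linear feedback from $\tub$, so $\|\tu_1\|_{L^q}$ is a conserved quantity at the linear level. I would then run a bootstrap: assume temporal decay with rates matching \eqref{th33} and its $\tuc$-analogue, verify them from Duhamel formulas for $\tud$ and $\tuc$ (the latter via the $\Lambda^{-s}$/negative-Sobolev argument of Section~\ref{sec:2+}, already used in the proof of Theorem~\ref{thm:1.2}), and propagate the $L^q$ norm of $\tu_1$ to all times by a transport estimate, using \asmp{D1}--\asmp{D2} to kill the would-be growing nonlinear contributions and \asmp{D3} to kill the linear mixing with $\tub$. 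The exponent $d(1/2+1/q-1/p)+1$ in the definition of $s_3^*$ then arises by Hausdorff--Young / Riesz interpolation applied to the linearized heat-transport kernel of the dissipative block against $\tu_1\in L^q\cap L^\infty_{t}(L^q_x)$; the other two terms in the min correspond, as before, to the $L^p$-only decay and the regularity ceiling $\ell-1$.

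The main obstacle is precisely the borderline $d=2$ case. In two dimensions the rate $(1+t)^{-\frac d2(\frac1p-\frac12)-\frac12}$ from the $L^p$-hypothesis alone is not integrable-enough to close the quadratic terms of the form $\tu_1\,\nabla \tub$ appearing in the $\tu^\flat$ equation, because the non-dissipative factor $\tu_1$ has no a~priori temporal decay. The new $L^q$-control of $\tu_1$ obtained from \asmp{D3} is exactly what supplies the missing $(1+t)^{-d/(2q)}$ factor through the Duhamel convolution with the dissipative kernel on $\tub$. Carefully managing this convolution, together with the regularity loss incurred when the nonlinear term is differentiated $\ell-1$ times (this is why $s_3^*$ is capped at $\ell-1$ and why the refined assumption $p<2d/(2(3-\ell)+d)$ is needed), is the technical heart of the proof; modulo that point, the remainder proceeds by the same continuity argument coupling \eqref{th31} and the decay rates, upgrading the local solution provided by \cite{kato1975local,majda1984local} to a global one.
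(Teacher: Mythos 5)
Your overall architecture (partially normalized coordinates, recycling the closed $H^\ell$ energy estimate from the framework of Theorem~\ref{thm:1.2}, and a Duhamel/bootstrap decay hierarchy sharpened by an $L^q$ bound coming from \asmp{D3}) matches the paper's strategy. But your decisive third step contains a genuine gap. You claim that after applying \asmp{D3} the linearized $\tu_1$-equation is ``pure transport with \emph{no} linear feedback from $\tub$,'' and you then propose to propagate $\norm{\tu_1}_{L^q}$ by a transport estimate on the $\tu_1$-equation itself. This is not correct: the equation \eqref{2.45_Sys_1} for $\tu_1$ contains the convective coupling $\sum_{k}\widetilde{A}^{k,\sharp}(\tilde u)\,\tilde u^{\flat}_{x_k}$, and $\widetilde{A}^{k,\sharp}(0)$ does not vanish in general. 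Assumption \asmp{D3} only removes the linear coupling through the \emph{source} term ($l_1(0)\nabla_u Q(0)=0$); it does nothing to the first-order convective coupling. In an $L^q$ estimate of $\tu_1$ with $q<2$ this term produces $\norm{\nabla\tilde u^{\flat}}_{L^q}$, which is not controlled anywhere in the scheme (all available bounds on $\nabla\tilde u^{\flat}$ are $L^2$-based, and $L^q$ with $q<2$ demands more spatial decay, not less), so the Gronwall argument cannot close.

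The paper's resolution, which your proposal is missing, is to abandon $\tu_1$ and instead estimate the wave component $\tilde v_1=\tilde l_1(\tilde u)\tilde u$ (Proposition~\ref{prop:6+.1}). Multiplying the \emph{full} system \eqref{2.12} by the left eigenvector $\tilde l_1(\tilde u)$ makes the principal part exactly $\partial_t\tilde v_1+\sum_k\tilde\lambda_1(\tilde u,\mathtt{e}_k)(\tilde v_1)_{x_k}$, with no linear term in $\nabla\tilde u^{\flat}$ on the left; the derivative instead falls on $\tilde l_1(\tilde u)$, producing quadratic commutator terms of the form $\tilde u_j\cdot O(\nabla\tilde u^{\flat},\tilde u^{\mathtt D})$ that are estimated in $L^q$ by H\"older against $L^{2q}$ factors. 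Assumption \asmp{D3} is then used precisely to show $\tilde l_{1p}(0)=0$ for $p=r+1,\dots,n$, so that $\tilde l_1(\tilde u)\widetilde Q(\tilde u)$ has no linear term either, and the Gronwall argument \eqref{tt3} closes. The $L^q$ bound on $\tilde v_1$ (not $\tu_1$) is what feeds into the improved nonlinear estimates \eqref{7.30+}--\eqref{7.32+} and ultimately the $d=2$ decay. A secondary, minor point: the restriction $p<2d/(2(3-\ell)+d)$ for $\ell\le 3$ is needed in the decay bootstrap (to ensure $\sigma_p+\tilde s_3/2>1$ when $\tilde s_3=\ell-1$), not in the high-order energy commutators as you state.
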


\begin{remark}\label{Rem2.3}%
{\color{black}It is verified in Section \ref{App_C}} that the damping full Euler system of adiabatic flow satisfies all the assumptions \asmp{A1}--\asmp{A4}, \asmp{B} and \asmp{D1}--\asmp{D3} and hence Theorem \ref{thm:1.3} can be applied. This improves the result of \cite{tan2013global} in 3D and produces the first result of global unique solution in 2D which seems necessary to require the low frequency assumption of the initial data. {\color{black}However, our results could not cover the multidimensional thermal non-equilibrium flow \cite{zeng2015gas}.}
\end{remark}


{{Now let us illustrate the main difficulties encountered in proving these results of Theorems \ref{thm:1.1}--\ref{thm:1.3} and explain our strategies to overcome them.
Since the components of the solution behave totally distinctively as already seen in the theorems,
it is essential to divide the system, at least in the linear level, into the dissipative part and the non-dissipative one.
Different from the specific models which usually have certain physical variables to give reasonable ways of doing this,
for the general system considered in this paper these two parts are generally strongly coupled in an implicit manner.
In \cite{liuqu2012JMPA}, the methods of normalized coordinates and formulas of wave decomposition are used to identify the two parts for the one dimensional systems with partial strict dissipation.
However, for our multidimensional systems, these methods,
which are originally designed only for one dimensional problems, are just not applicable.
Meanwhile, when discussing the dissipation structures given by the entropy dissipation condition together with Kawashima condition instead of the strict dissipation, the way of applying these methods would generally destroy the structure to identify the faster decaying variables from the slower ones. To get around this difficulty, the partially normalized coordinates \eqref{2.5_trans} is introduced under the assumption \asmp{B}.
It is shown in Section \ref{sec:2} that by reformulating the system \eqref{1.1_Sys} in terms of this new coordinates into the system \eqref{2.12}, the two parts are successfully identified, $i.e.$, the dissipative part \eqref{2.47_Sys_2n} for $\tub=(\tuc,\tud)$ and the rest part \eqref{2.45_Sys_1} for the non-dissipative $\tu_1$. Also, those structural conditions can be reformulated in terms of new variables in a more applicable way.

The main part of proving Theorems \ref{thm:1.1}--\ref{thm:1.3} is to derive the energy estimates. In the following, we explain the strategy by first dealing with the case under the assumptions of Theorem \ref{thm:1.1}; Theorems \ref{thm:1.2}--\ref{thm:1.3} are proved in the same fashion and we only need to explain the additional arguments for the proof. It is crucial to analyze the structures of the linearized system and the nonlinear terms. It is with the help of the partially normalized coordinates that the linear part of \eqref{2.47_Sys_2n} is shown to be dissipative in the sense of satisfying the entropy dissipation condition and the Kawashima condition. By putting aside the nonlinear effect, one can follow the arguments of \cite{hanouzetnatalini2003ARMA,yong2004ARMA} to derive the basic $L^2$ estimate by using the entropy dissipation condition \asmp{A1}--\asmp{A3} as well as the high order energy estimates for $\tub$ which include the damping dissipation of $\tud$; the partial Kawashima condition \asmp{A4} can be applied to recover the degenerate dissipation of $\tuc$. The remaining difficulty is to handle with the nonlinear terms. It is again with the help of the partially normalized coordinates that under the degenerate assumption \asmp{WD1} there is no nonlinear term resulting from the source term composed solely by the non-dissipative part, that is, those nonlinear terms are of form $O(1)\tu\tub$, at least. This feature provides relatively nice structure so that we can complete the estimate of the dissipative part $\tub$.

The next natural step is to derive the energy estimates for the remaining non-dissipative part $\tu_1$. Under the weakly linearly degenerate assumption \asmp{WD2}, there is no nonlinear term resulting from the convection term in \eqref{2.45_Sys_1} composed solely by the non-dissipative part. However, there would be a loss of derivative if one directly performs the energy estimates by using \eqref{2.45_Sys_1}. To get around this difficulty, we return back to the original full system \eqref{2.12} and resort to generalize the method of wave decomposition (see \cite{john1974blowup}) to our multidimensional case. The isotropy assumption \asmp{B} will be used again. After the delicate analysis of the nonlinear term under the degenerate conditions \asmp{WD1}--\asmp{WD2}, we can finish the full energy estimates as
\begin{equation}\label{llqq}
  \norm{\tu(t)}_{H^\ell}
 \ls \norm{\tu_0}_{H^\ell} +\int_0^t  \norm{     \tub  }_{L^\infty}+\cdots  \rmd\tau.
\end{equation}

Note carefully that we cannot control the time integral in \eqref{llqq} by the dissipation estimates. To bound this time integral, we will turn to derive a strong enough decay rate of $\norm{     \tub  }_{L^\infty}+\cdots$. This step is extremely delicate. We will need to highly explore a refined linear decay estimates which requires weaker low frequency assumption of $\tud_0$ than $\tuc_0$, the stronger degeneracy of $\tq^\mathtt{C}$ than $\tq^\mathtt{D}$ and the faster decay of $\tud$ than $\tuc$. The technical point thus lies in the way of balancing these despairs. Taking into account these facts, by a bootstrap estimates, which implies that the decay of higher order norms can be deduced from the decay of lower order norms, and a use of the Duhamel formula, which implies that the decay of lower order norms is bounded in terms of the decay of higher order norms, we are able to derive a decay of the solution as stated in Theorem \ref{thm:1.1} in a recursive way by the smallness of the solution. It then follows from the Sobolev interpolation that $\norm{     \tub  }_{L^\infty}$ is integrable in time {\color{black} as long as the parameters $p$ and $d$ satisfy the required restriction given in the theorem}. We remark here that the introduction of the fractional derivative in the study enlarges the {\color{black} available} range of $p$ and $d$. This closes the energy estimates and then completes the proof of Theorem \ref{thm:1.1}.

To prove Theorem \ref{thm:1.2}, it is observed that under the stronger degenerate conditions \asmp{D1}--\asmp{D2}, the nonlinear terms behave better. For example, under \asmp{D1}, nonlinear terms resulting from the source term are of the form $O(1)\tu\tud$, or so. Based on the stronger degeneracy, we can improve the full energy estimates as
\begin{equation}
  \norm{\tu(t)}_{H^\ell}
 \ls \norm{\tu_0}_{H^\ell} +\int_0^t  \norm{     \tud  }_{L^\infty}+\cdots  \rmd\tau.
\end{equation}
Since $\tud$ decays at a faster $1/2$ rate than $\tuc$, this ultimately enlarges the range of $p$ and $d$ as stated in Theorem \ref{thm:1.2}. Note that we can mostly deduce from \eqref{th23} that $\norm{     \tud}_{L^\infty}$ decays at the rate of $(1+t)^{-{d}/4 - {1}/{2}+\varepsilon }$ for any $\varepsilon>0$, which restricts Theorem \ref{thm:1.2} only hold for $d\ge 3$.

The main goal of Theorem \ref{thm:1.3} is to include the case $d=2$. Note that even in view of the linear decay estimates $\norm{     \tud  }_{L^\infty}$ decays at the rate of $(1+t)^{-{d}/{(2p)} - {1}/{2} }$, and then we need to require the low frequency assumption of $\tub_0$ when $d=2$. However, due to the nonlinear effect we cannot achieve this rate, and Theorem \ref{thm:1.2} basically tells that $\norm{     \tud  }_{L^\infty}$ would decay at the same rate of $(1+t)^{-{d}/4 - {1}/{2}+\varepsilon }$ for all $p$ if  there is no further additional assumption. Thereby, the $L^q$ assumption of $ \tu_{0,1}$ comes into the role. The idea is to derive the $L^q$ estimate of $\tu_1$ so that we can improve a bit some nonlinear estimates when using the Duhamel formula, for example the estimates of $\norm{\tu_1\tud}_{L^p}$. However, we can not derive the $L^q$ estimate of $\tu_1$ by using \eqref{2.45_Sys_1}, and we again need to return back to the original full system \eqref{2.12} and introduce the wave decomposition to consider instead the first wave $\tilde{v}_1$. It is the derivation of the $L^q$ estimate of $\tilde{v}_1$ that we need the assumption \asmp{D3}. With the help of the $L^q$ estimate of $\tilde{v}_1$, we can improve the nonlinear estimates so that we can derive the decay rates of the solution as stated in Theorem \ref{thm:1.3}. Note that now the decay rate of $\norm{     \tud}_{L^\infty}$ is improved to be $(1+t)^{-{d}/(2q) - {1}/{2}+\varepsilon}$, which allows the case $d=2$ for $q<2$. One may then notice the necessity of the $L^q$ assumption of $ \tu_{0,1}$.

 The rest of the paper is organized as follows. In Section \ref{sec:2}, partially normalized coordinates will be used to reformulate the system \eqref{1.1_Sys} and those structural conditions.  In Section \ref{sec:4} the structure of the reformulated system is analyzed carefully as the foundation for the further analysis.
The delicate energy estimates is performed in Sections \ref{en1111111}--\ref{en2222222},  the low frequency estimates for two dimensional case is presented in Section \ref{sec:6+}, and the refined decay estimates is given in Section \ref{en3333333}. Finally, those estimates are combined in Section \ref{999} to prove Theorems \ref{thm:1.1}--\ref{thm:1.3}.}}


\section{Partially normalized coordinates} \label{sec:2}

After a linear transformation if necessary (see Appendix \ref{linear trans} for details), one may assume
\begin{equation} \label{1.10}
\frac{\p Q_i}{\p u_{q}}(0) = 0, \hs q = 1, \dots, r; \ i = 1,\dots, n,
\end{equation}
and
\begin{equation} \label{1.11}
r_1(0) = e_1,
\end{equation}
where $e_i\, (i=1,\dots,n)$ stands for the $i$th unit vector in $\Rn{n}$.

The first step is to verify that the nonlinear mapping $u=u(\tu)$ in the phase space, defined by \eqref{2.5_trans} and \eqref{1.13}, is invertible near $\tu=0$.
Denote the Jacobian matrix of the transformation $u=u(\tu)$ by
\begin{equation}
J(\tilde{u}) := \frac{\partial u}{\partial \tilde{u}}(\tilde{u}).
\end{equation}
By the definition, it holds
\begin{equation} \label{2.7_ub}
u \ubb = u^{(1)}(0, \ubz)= \ubz
\end{equation}
and
\begin{equation}\label{2.8}
\frac{\partial u}{\partial \tilde{u}_1}(\tilde{u}) \equiv r_1(u(\tilde{u})).
\end{equation}
Hence,
\begin{equation}
J \ubb = \left( r_1 \ubb, e_2, \dots, e_n \right). \label{2.9}
\end{equation}
This together with \eqref{1.11} yields
\begin{equation}
J(0) = \left( r_1 (0), e_2, \dots, e_n \right)= I_n, \label{2.3}
\end{equation}
which implies that the transformation $u = u(\tilde u)$ is a local diffeomorphism near $\tu=0$. In the partially normalized coordinates $\tu$ of the phase space, the system \eqref{1.1_Sys} {\color{black}(and \eqref{sys}) is transformed into
\begin{equation} \label{2.12}
\begin{cases}
\displaystyle
\partial_t \tilde{u} + \sum_{k = 1}^{d} \widetilde{A}^k(\tilde{u}) \tilde u_{x_k} = \widetilde{Q}(\tilde{u}),
\\
 \tilde{u}\mid_{t=0} = \tilde{u}_0.
\end{cases}
\end{equation}
Here the coefficient matrices
\begin{equation} \label{3.11}
\ta^k(\tu) = J^{-1}(\tilde{u}) A(u(\tilde{u})) J(\tilde{u}), \hs k=1,\dots,d
\end{equation}
and the inhomogeneous term}
\begin{equation}
 {\color{black}\widetilde{Q}(\tilde{u}) = J^{-1}(\tilde{u}) Q(u(\tilde{u})).}  \label{2.14}
\end{equation}

Now it is important to study the structure of $\widetilde{A}^k(\tilde{u})$, which will exhibit the advantage of our partially normalized coordinates $\tu$.
First of all, it is direct to check that $\ta^k(\tu)$ possesses the following expression
\begin{equation}
\widetilde{A}^k(\tilde{u}) = \widetilde{A}(\tilde{u},  \mathtt{e}_k),\quad k=1,\dots, d
\end{equation}
with $\sek$ the $k$th unit vector in $\rd$ and
\begin{equation} \label{2.15}
\widetilde{A}(\tilde{u},\omega)  := J^{-1}(\tilde{u}) A(u(\tilde{u}), \omega) J(\tilde{u}),\ \omega \in \mathbb{S}^{d-1},
\end{equation}
where the matrix $A(u, \omega)$ is defined by \eqref{A matrix}.
Therefore, it suffices to study the structure of $\widetilde{A}(\tilde{u},\omega)$. Recalling that $l_i(u,\omega)$ (\resp $r_i(u,\omega)$) is the $i$th left (\resp right) eigenvector of $A(u,\omega)$ with the corresponding eigenvalue $\lambda_i(u,\omega)$, $i=1,\dots,n$, one could then choose
\begin{align}
\tilde{\lambda}_i(\tilde{u},\omega) & :=  \lambda_i(u(\tilde{u}),\omega), \label{2.22}\\
\tilde{r}_i(\tilde{u},\omega) & = \left( \tilde{r}_{1i}(\tilde{u}), \dots, \tilde{r}_{ni}(\tilde{u}) \right)^T := J^{-1}(\tilde{u}) r_i(u(\tilde{u}),\omega),  \label{2.21}  \\
\tilde{l}_i(\tilde{u},\omega) & = \left( \tilde{l}_{i1}(\tilde{u}), \dots, \tilde{l}_{in}(\tilde{u}) \right):= l_i(u(\tilde{u}),\omega) J(\tilde{u})   \label{2.23}
\end{align}
as the eigenvalues and a complete set of left (\resp right) eigenvectors of $\ta(\tu,\omega)$, that is,
\begin{align}
&\tilde{l}_i(\tilde{u}, \omega) \widetilde{A}(\tilde{u},\omega) = \tilde{\lambda}_i(\tilde{u},\omega) \tilde{l}_i(\tilde{u},\omega),\hs i=1,\dots,n,\\
&\widetilde{A}(\tilde{u},\omega) \tilde{r}_i(\tilde{u}, \omega) = \tilde{\lambda}_i(\tilde{u},\omega) \tilde{r}_i(\tilde{u},\omega), \hs i=1,\dots,n.\label{tezhen}
\end{align}
From the normalization \eqref{1.4}, it follows
\begin{equation} \label{2.28}
\tilde{l}_i(\tilde{u},\omega) \tilde{r}_{i'}(\tilde{u},\omega) \equiv \delta_{ii'}, \hs    i,i' =1,\dots, n.
\end{equation}
Moreover, by the assumption \asmp{B} and \eqref{2.8}, respectively, for any $\omega \in \mathbb{S}^{d-1}$,
\begin{equation} \label{2.330}
\tilde{l}_1(\tilde{u},\omega)=l_1(u(\tilde{u})) J(\tilde{u}) \equiv   \tilde{l}_1(\tilde{u})
\end{equation}
and
\begin{equation}
\tilde{r}_1(\tilde{u},\omega)=J^{-1}(\tilde{u}) r_1(u(\tilde{u})) \equiv  e_1. \label{2.33}
\end{equation}
Combining \eqref{2.28}--\eqref{2.33}, it follows
\begin{equation}
\tilde{l}_{11}(\tilde{u}) \equiv  1 \text{ and }
\tilde{l}_{j1}(\tilde{u},\omega) \equiv   0,\hs j=2,\dots,n,\ \forall\, \omega\in \mathbb{S}^{d-1}. \label{2.35}
\end{equation}
More importantly, taking $i = 1$ in \eqref{tezhen} and noting \eqref{2.33}, leads to
\begin{equation}
\widetilde{A}_{11}(\tilde{u},\omega) = \tilde{\lambda}_1(\tilde{u},\omega)  \text{ and }
\widetilde{A}_{j1}(\tilde{u},\omega) \equiv 0,\hs  j=2,\dots,n, \ \forall\, \omega \in \mathbb{S}^{d-1}.\label{2.40}
\end{equation}
Throughout the paper the following notational convention are used for matrices: for an $n\times n$ matrix $M=\left(M_{ii'}\right)_{i,i'=1}^n$, denote
\begin{equation}
 {M}^\sharp = (M_{12},\dots,M_{1n}),\  M^\flat=\left(M_{jj'}\right)_{j,j'=2}^n  \text{ and } M^{\mathtt{D}} = \left(M_{pp'}\right)_{p,p'=r+1}^n.
\end{equation}
Then by \eqref{2.40}, one has the following structure of $\widetilde{A}(\tilde{u},\omega)$:
\begin{equation} \label{2.42}
\widetilde{A}(\tilde{u},\omega) = \begin{pmatrix}
\tilde{\lambda}_1(\tilde{u},\omega) & \widetilde{A}^\sharp(\tilde{u},\omega) \\
0 & \widetilde{A}^\flat(\tilde{u},\omega)
\end{pmatrix}.
\end{equation}
Accordingly, the system \eqref{2.12} indeed takes the form
\begin{align}
 &\partial_t \tilde{u}_1 + \sum_{k=1}^{d} \tilde{\lambda}_1(\tilde{u}, \mathtt{e}_k)   (\tilde{u}_1)_{x_k} + \sum_{k=1}^{d} \widetilde{A}^{k,\sharp}(\tilde{u})  \tilde{u}^\flat_{x_k} = \widetilde{Q}_1(\tilde{u}), \label{2.45_Sys_1}
\\\label{2.47_Sys_2n}
&\partial_t \tilde{u}^\flat + \sum_{k=1}^{d} \widetilde{A}^{k,\flat}(\tilde{u})  \tilde{u}^\flat_{x_k} = \widetilde{Q}^\flat(\tilde{u}).
\end{align}
Note that the system \eqref{2.12} is split into two subsystems in some sense.
We will see that the subsystem \eqref{2.47_Sys_2n} of $\tub$ is dissipative in the sense of satisfying the entropy dissipation condition and the Kawashima condition and the main part of the subsystem \eqref{2.45_Sys_1} of the non-dissipative $\tu_1$ is indeed (weakly) linearly degenerate.
This structure property is crucial for the whole analysis in this paper.

Recall that from the assumption \asmp{B}, the isotropy \eqref{2.330}--\eqref{2.33} hold in the partially normalized coordinates, and we restate them as follows
\begin{itemize}
\item[\asmp{\tb}] The first family of eigenvectors satisfy an isotropic condition:
\begin{equation}\label{1.12_iso_r2}
\tilde{l}_1(\tilde{u},\omega)\equiv   \tilde{l}_1(\tilde{u}) \text{ and }
\tilde{r}_1(\tilde{u},\omega) \equiv   e_1, \hs \forall\, \omega \in \mathbb{S}^{d-1}.
\end{equation}
\end{itemize}
The next task is to derive the corresponding forms of the assumptions \asmp{A1}--\asmp{A4}, \asmp{WD1}--\asmp{WD2}, \asmp{D1}--\asmp{D2} and \asmp{D3}.
First, recalling \eqref{2.14} and \eqref{1.2} it trivially holds that $\widetilde{Q}(0) = 0$ and
\begin{equation} \label{2.270}
\sum_{i=1}^{n} J_{qi}(\tu) \tq_i(\tu) \equiv 0,\hs q=1,\dots, r.
\end{equation}
Moreover, by \eqref{2.14} and \eqref{2.3}, one has
\begin{equation} \label{2.28+}
\frac{\p\tq}{\p \tu_i}(0) = \left. \left( \frac{\p J^{-1}}{\p \tu_i}(\tu) \right)\right|_{\tu = 0} Q(0) + J^{-1}(0)  \left. \left( \sum_{i'=1}^{n}  \frac{\p u_{i'}}{\p \tu_i} \frac{\p Q}{\p u_{i'}} (u(\tu)) \right)\right|_{\tu = 0} = \frac{\p Q}{\p u_i}(0).
\end{equation}
Hence the assumption \asmp{A1} is translated to be
\begin{itemize}
\item[\asmp{\ta 1}]  The matrix $\left( \ntu \tq\right)^\mathtt{D}(0)$ is invertible.
\end{itemize}

Next,  {\color{black}set}
\begin{equation} \label{entropy}
\tilde{\eta}(\tilde{u}) = \eta(u(\tilde{u})),
\end{equation}
then 
\begin{equation} \label{2.55}
\tilde{\eta}(0) = 0\text{ and } \ntu \tilde{\eta}(0) = 0.
\end{equation}
Moreover, since
\begin{equation}
\frac{\partial^2 \tilde{\eta}}{\partial \tilde{u}_i \partial \tilde{u}_{i'}}(\tilde{u}) = \sum_{r,r'=1}^{n} \frac{\partial^2 \eta}{\partial u_r \partial u_{r'}}(u(\tilde{u})) \frac{\partial u_r}{\partial \tilde{u}_i}(\tilde{u}) \frac{\partial u_{r'}}{\partial \tilde{u}_{i'}}(\tilde{u}) + \sum_{r=1}^{n} \left( \frac{\partial \eta}{\partial u_r}(u(\tilde{u})) - \frac{\partial \eta}{\partial u_r}(0)  \right)  \frac{\partial^2 u_r}{\partial \tilde{u}_i \partial \tilde{u}_{i'}}(\tilde{u}),
\end{equation}
by \eqref{2.3} it holds
\begin{equation} \label{2.53}
\ntu^2 \tilde{\eta}(0) = J^T(0) \nuu^2 \eta(0) J(0) = \nuu^2 \eta(0).
\end{equation}
Set 
$\tilde{\psi}^k(\tu) = \psi^k(u(\tu))$,
then the assumption \asmp{A2} leads to
\begin{itemize}
\item[\asmp{\ta 2}] The strictly convex smooth entropy function $\tilde{\eta}(\tu)$ and the $d$ smooth entropy flux functions $\tilde{\psi}^k(\tu)\, (k=1,\dots,d)$ satisfy that for all $\tu$,
\begin{equation}\label{2.32}
{\color{black}(\ntu \tilde{\psi}^k(\tilde{u}))^T = (\ntu \tilde{\eta}(\tilde{u}))^T\ta^k(\tu).}
\end{equation}
\end{itemize}

Now, noting \eqref{2.3}, \eqref{2.14}, \eqref{2.21} and \eqref{2.28+}, it follows from the assumptions \asmp{A3} and \asmp{A4} that
\begin{itemize}
\item[\asmp{\ta 3}] There exists a positive constant $\tilde c_e > 0$ such that for all $\tu$,
\begin{equation} \label{2.56}
\ntu \tilde{\eta} (\tilde{u}) \cdot \widetilde{Q}(\tilde{u}) \leq -\tilde{c}_e |\widetilde{Q}(\tilde{u})|^2.
\end{equation}
\item[\asmp{\ta 4}] The last $n-1$ right eigenvectors of $\ta(0,\omega)$ are
not contained in the kernel of the Jacobian $\ntu \widetilde{Q}(0)$ for any $\omega \in \mathbb{S}^{d-1}$, that is,
\begin{equation} \label{2.57}
\ntu \widetilde{Q}(0) \tilde{r}_j (0,\omega) \neq 0,\hs j=2,\dots,n,
\end{equation}
but
\begin{equation*}
\ntu \tq(0) \trr_1(0) = 0.
\end{equation*}
\end{itemize}

For the assumptions \asmp{WD1}--\asmp{WD2},
we notice that the definition \eqref{2.5_trans} directly leads to
\begin{equation}
u(\tu_1 e_1) = \utra{\tu_1}{0}.
\end{equation}
Then by \eqref{2.14}, \eqref{2.22} and \eqref{1.12_iso_r2}, one can express them as
\begin{itemize}
\item[\asmp{\twd 1}] $\widetilde{Q}(\tilde{u})$ is degenerate along the first characteristic trajectory passing through $0$:
\begin{equation} \label{2.52+}
\frac{\partial(J\tq)}{\partial \tilde{u}_1} (\tu_1 e_1) \equiv 0,\quad \forall\, |\tu_1|\text{ small}.
\end{equation}
\item[\asmp{\twd 2}] The first eigen-family is weakly linearly degenerate:
\begin{equation} \label{2.65+}
\frac{\p \tilde{\lambda}_1}{\p \tu_1}(\tu_1e_1,\omega)=0,\quad \forall\, |\tu_1|\text{ small},\ \forall\,\omega \in \mathbb{S}^{d-1} .
\end{equation}
\end{itemize}
Similarly, the assumptions \asmp{D1}--\asmp{D2} are translated to be
\begin{itemize}
\item[\asmp{\td 1}] $\widetilde{Q}(\tilde{u})$ is degenerate along \emph{each} first characteristic trajectory:
\begin{equation} \label{2.52++}
\frac{\partial(J\tq)}{\partial \tilde{u}_1} (\tu )\equiv   0,\quad \forall\,\tu.
\end{equation}
\item[\asmp{\td 2}] The first eigen-family is linearly degenerate:
\begin{equation} \label{2.65++}
\frac{\p \tilde{\lambda}_1}{\p \tu_1}(\tilde{u},\omega)  \equiv 0,\quad \forall\,\tu,\ \forall\, \omega \in \mathbb{S}^{d-1}.
\end{equation}
\end{itemize}

At last, by \eqref{2.3}, \eqref{2.330} and \eqref{2.28+}, the assumption \asmp{D3} can be expressed as
\begin{itemize}
\item[\asmp{\td 3}] The first eigen-family gets no linear effect from the source term
\begin{equation} \label{C3t}
\tll_1(0) \ntu \tq(0) = 0.
\end{equation}
\end{itemize}

\begin{remark}
Since the assumptions \asmp{A1}--\asmp{A4} and \asmp{B} are made in all the three Theorems \ref{thm:1.1}--\ref{thm:1.3},
\asmp{\ta 1}--\asmp{\ta 4} and \asmp{\tb} are always supposed to be hold in the rest of the paper.
\end{remark}

\section{Structural analysis} \label{sec:4}

In this section, useful algebraic consequences are established for the structural assumptions \asmp{\ta 1}--\asmp{\ta 4}, \asmp{\twd 1} and \asmp{\td 1}.

\subsection{Structure of the source term $\tq$}
In this subsection the source term $\widetilde{Q}(\tilde{u})$ is analyzed under the assumption \asmp{\twd 1} or \asmp{\td 1} by the Taylor expansion in an appropriate way.
\subsubsection{The expansion of $\tq(\tu)$ under \asmp{\twd 1}: }
First, assume \asmp{\twd 1}, then it holds that $\widetilde{Q}(\tilde{u}_1 e_1) \equiv J^{-1}(\tu_1e_1)J(0)\tq(0)= 0$.
Using this fact, one may first expand $\widetilde{Q}(\tilde{u})$ around $\tu_1 e_1$, that is, fixing $\tu_1$ and viewing $\widetilde{Q}(\tilde{u})$ as a function of $\tub=(\tu_2,\dots,\tu_n)^T$ and then expanding it around $\tub=0$, to have
\begin{equation}\label{eexx1}
\widetilde{Q}_i(\tilde{u}) = \sum_{j'=2}^{n} \frac{\partial \widetilde{Q}_i}{\partial \tilde{u}_{j'}}(\tilde{u}_1 e_1) \tilde{u}_{j'} + \sum_{j',j''=2}^{n} \ty_{ij'j''}(\tu) \tilde{u}_{j'} \tilde{u}_{j''},\hs i=1,\dots,n.
\end{equation}
Expanding further the functions of $\tu_1$ inside the first summation in \eqref{eexx1} around $\tu_1=0$, one can obtain
\begin{equation}\label{eexx2}
\begin{split}
\widetilde{Q}_i(\tilde{u}) = & \sum_{j'=2}^{n} \frac{\partial \widetilde{Q}_i}{\partial \tilde{u}_{j'}}(0) \tilde{u}_{j'} + \sum_{j'=2}^{n} \frac{\partial^2 \widetilde{Q}_i}{\partial \tilde{u}_1\partial \tilde{u}_{j'}}(0) \tilde{u}_{j'} \tilde{u}_1 + \sum_{j',j''=2}^{n} \ty_{ij'j''}(\tu) \tilde{u}_{j'} \tilde{u}_{j''} \\
& + \sum_{ j' =2}^n \widetilde{Z}_{ij'11}(\tilde u) \tilde u_{j'} \tilde u_1^2,\hs i=1,\dots,n.
\end{split}
\end{equation}

Now examine the first two summations in \eqref{eexx2}. First, it follows from \eqref{2.28+}, \eqref{1.10} and \eqref{1.2} that
\begin{equation}
\frac{\partial \widetilde{Q}_i}{\partial \tilde{u}_q}(0) = \frac{\partial \widetilde{Q}_q}{\partial \tilde{u}_i}(0) = 0, \hs  q=1,\dots,r;\ i=1,\dots, n. \label{2.50}
\end{equation}
On the other hand, by \eqref{2.14}, it holds
\begin{equation}
\begin{split}
\frac{\p^2 \tq}{\p \tu_{i'} \p \tu_i}(\tu) = &   \frac{\p^2 J^{-1}}{\p \tu_{i'} \p \tu_i}(\tu)   J(\tu) \tq(\tu)  +   \frac{\p J^{-1}}{\p \tu_{i'} }(\tu)   \frac{\p J}{\p \tu_i}(\tu)   \tq(\tu) +  \frac{\p J^{-1}}{\p \tu_{i'}}(\tu)   J(\tu)  \frac{\p \tq}{\p \tu_i}(\tu)   \\
& +   \frac{\p J^{-1}}{\p \tu_i }(\tu)   \frac{\p J}{\p \tu_{i'}}(\tu)  \tq(\tu) +  \frac{\p J^{-1}}{\p \tu_i}(\tu)  J(\tu)  \frac{\p\tq}{\p \tu_{i'}}(\tu) + J^{-1}(\tu)  \frac{\p^2\left( Q(u(\tu)) \right)}{\p \tu_{i'} \p \tu_i} .
\end{split}
\end{equation}
Hence, by the fact $\tq(0) = 0$, \eqref{2.50}, \eqref{2.3} and \eqref{1.2}, one has
\begin{equation}\label{upup}
 \frac{\p^2 \tq_{q}}{\p \tu_{q'} \p \tu_{q''}}(0) =  \left.   \frac{\p^2}{\p \tu_{q'} \p \tu_{q''}} \left( Q_{q}(u(\tu))\right)  \right|_{\tu = 0} = 0,\quad q,q', q''=1,\dots r.
\end{equation}
Denote the constant matrix
\begin{gather}\label{theta1}
{\Theta}  = \nabla_{\tu}\tq(0)
\end{gather}
and for $i,i',i'' = 1,\dots, n$, 
\begin{gather}\label{theta111}
{\Upsilon}_{ii'i''} =   \frac{\partial^2 \widetilde{Q}_i}{\partial \tilde{u}_{i'} \partial \tilde{u}_{i''}}(0).
\end{gather}
Then \eqref{2.50} and \eqref{upup} imply that
\begin{gather}
\Theta_{qi} = \Theta_{iq} = 0, \hs   i  =1,\dots, n;\ q =1,\dots, r, \label{3.10++22}
\\ \label{theta}
{\Upsilon}_{qq'q''} = 0, \quad q, q',q''=1,\dots r.
\end{gather}
Accordingly, it follows from \eqref{eexx2} that
\begin{equation}
\label{for decay1}
\widetilde{Q}_q(\tilde{u}) =   \sum_{ p' =r+1 }^n \Upsilon_{qp'1} \tilde{u}_{p'}\tilde{u}_1 + \sum_{j',j''=2}^{n} \ty_{qj'j''}(\tu) \tilde{u}_{j'} \tilde{u}_{j''}  + \sum_{ j' =2}^n \widetilde{Z}_{qj'11}(\tilde u) \tilde u_{j'} \tilde u_1^2, \hs q=1,\dots, r
\end{equation}
and
\begin{equation}\label{for decay2}
\widetilde{Q}_p(\tilde{u}) =   \sum_{p'=r+1}^n \Theta_{pp'} \tilde u_{p'} + \sum_{j'=2}^n \sum_{i''=1}^{n} \ty_{pj'i''}(\tu) \tilde{u}_{j'} \tilde{u}_{i''}  , \hs p= r+1,\dots, n,
\end{equation}
where $\ty_{pj'1}(\tu) = \Upsilon_{p1j'} + \tz_{pj'11}(\tu) \tu_1$.

\begin{remark}
By the matrix $\Theta$ defined in \eqref{theta1}, \asmp{\ta 1} is equivalent to
\begin{equation}
\det \Theta^\mathtt{D}\neq 0. \label{3.10++}
\end{equation}
\end{remark}

\subsubsection{The expansion of $\tq(\tu)$ under \asmp{\td 1}}

Since  \asmp{\td 1} implies \asmp{\twd 1}, the expansions \eqref{for decay1}--\eqref{for decay2} and all the computations in the previous subsubsection hold in this case.
But under the stronger assumption \asmp{\td 1}, it is reasonable to expect a better expansion.

Again, using the fact $\widetilde{Q}(\tilde{u}_1 e_1) \equiv 0$, one may first expand $\widetilde{Q}(\tilde{u})$ around $\tu_1 e_1$ to have
\begin{equation}\label{eexx112}
\begin{split}
\widetilde{Q}_i(\tilde{u}) = &\sum_{j'=2}^{n} \frac{\partial \widetilde{Q}_i}{\partial \tilde{u}_{j'}}(\tilde{u}_1 e_1) \tilde{u}_{j'} + \sum_{j',j''=2}^{n} \frac{1}{2} \frac{\partial^2 \widetilde{Q}_i}{\partial \tilde{u}_{j'} \partial \tilde{u}_{j''}}(\tilde{u}_1 e_1) \tilde{u}_{j'} \tilde{u}_{j''} \\&+ \sum_{j',j'',j^* = 2}^n \widetilde{Z}'_{ij'j''j^*}(\tilde u) \tilde u_{j'} \tilde u_{j''} \tilde u_{j^*},\hs i=1,\dots,n.
\end{split}
\end{equation}
Due to the assumption \asmp{\td 1} and the fact $\tq(\tu_1 e_1) =\tq(0) = 0$, it holds
\begin{equation}\label{hhhh122}
\begin{split}
 \tq(\tu_1 e_1 + \tu_q e_q) - \tq(\tu_1 e_1)
= & J^{-1} (\tu_1 e_1 + \tu_q e_q) J (\tu_q e_q) \tq(\tu_q e_q)  \\
= & J^{-1} (\tu_1 e_1 + \tu_q e_q) J (\tu_q e_q) \left( \tq(\tu_q e_q) - \tq(0) \right) .
\end{split}
\end{equation}
This together with \eqref{2.50} yields
\begin{equation}
\frac{\p \tq}{\p \tu_q}(\tu_1 e_1) = J^{-1}(\tu_1 e_1) J(0) \frac{\p \tq}{\p \tu_q}(0) = 0, \hs   q =1,\dots, r.
\end{equation}
Thus, from \eqref{eexx112}, one can get
\begin{equation}\label{eexx3}
\begin{split}
\widetilde{Q}_i(\tilde{u}) = &\sum_{p'=r+1}^{n} \frac{\partial \widetilde{Q}_i}{\partial \tilde{u}_{p'}}(\tilde{u}_1 e_1) \tilde{u}_{p'} + \sum_{j',j''=2}^{n} \frac{1}{2} \frac{\partial^2 \widetilde{Q}_i}{\partial \tilde{u}_{j'} \partial \tilde{u}_{j''}}(\tilde{u}_1 e_1) \tilde{u}_{j'} \tilde{u}_{j''} \\&+ \sum_{j',j'',j^* = 2}^n \widetilde{Z}'_{ij'j''j^*}(\tilde u) \tilde u_{j'} \tilde u_{j''} \tilde u_{j^*},\hs i=1,\dots,n.
\end{split}
\end{equation}
Expanding further the functions of $\tu_1$ inside the first two summations in \eqref{eexx3} around $\tu_1=0$, one has
\begin{equation}\label{eexx2123}
\begin{split}
\widetilde{Q}_i(\tilde{u}) = & \sum_{p'=r+1}^{n} \frac{\partial \widetilde{Q}_i}{\partial \tilde{u}_{p'}}(0) \tilde{u}_{p'} + \sum_{p'=r+1}^{n} \ty'_{ip'1}(\tu) \tilde{u}_{p'} \tilde{u}_{1} \\&  + \frac{1}{2} \sum_{j',j''=2}^{n} \frac{\partial^2 \widetilde{Q}_i}{\partial \tilde{u}_{j'} \partial \tilde{u}_{j''}}(0) \tilde{u}_{j'} \tilde{u}_{j''} + \sum_{j',j'' =2}^n \sum_{i^* =1}^{n} \widetilde{Z}'_{ij'j''i^*}(\tilde u) \tilde u_{j'} \tilde u_{j''} \tilde u_{i^*} ,\hs i=1,\dots,n.
\end{split}
\end{equation}
Recall $\Theta$ from \eqref{theta1} and $\Upsilon$ from \eqref{theta111}, and note \eqref{3.10++22} and \eqref{theta}, then it holds
\begin{equation}\label{eexx4}
\widetilde{Q}_q(\tilde{u}) =   \sum_{p'=r+1}^n  \sum_{i'' =1}^{n} \ty'_{qp'i''}(\tu) \tilde{u}_{p'} \tilde{u}_{i''}  +  \sum_{j',j'' =2}^n \sum_{i^* =1}^{n}  \widetilde{Z}'_{qj'j''i^*}(\tilde u) \tilde u_{j'} \tilde u_{j''} \tilde u_{i^*},\hs q=1,\dots,r
\end{equation}
and
\begin{equation}\label{eexx44}
\widetilde{Q}_p(\tilde{u}) =  \sum_{p'=r+1}^{n} \Theta_{pp'} \tilde{u}_{p'}  + \sum_{p'=r+1}^{n} \ty'_{pp'1}(\tu) \tilde{u}_{p'} \tilde{u}_{1} + \sum_{j',j''=2}^{n} \ty'_{pj'j''}(\tu) \tilde{u}_{j'} \tilde{u}_{j''} ,\hs p=r+1,\dots,n,
\end{equation}
where $\ty'_{qp'j''}(\tu) = \hal \Upsilon_{qp'j''}$ and $\ty'_{pj'j''}(\tu) = \hal \Upsilon_{pj'j''} + \sum_{i^*=1}^{n} \tz'_{pj'j''i^*}(\tu) \tu_{i^*}$.

\subsection{Structure of the subsystem \eqref{2.47_Sys_2n}}

In this subsection the subsystem \eqref{2.47_Sys_2n} is analyzed under the assumptions \asmp{\ta 1}--\asmp{\ta 4}.

\subsubsection{Symmetrizer}
The first task is to search for a positive definite $(n-1) \times (n-1)$ symmetric matrix to symmetrize the subsystem \eqref{2.47_Sys_2n}.
Indeed,
\begin{equation} \label{3.1}
\widetilde{A}^{0,\flat}(\tilde{u}) := \left( \frac{\partial^2 \tilde{\eta}}{\partial \tilde{u}_j \partial \tilde{u}_{j'}} (\tilde{u}) - \sum_{i,i'=1}^{n} \frac{\partial \tilde{\eta}}{\partial \tilde{u}_i}(\tilde{u})
\left( \frac{\partial G}{\partial \tu} \right)^{-1}_{ii'}
\frac{\partial^2 G_{i'}}{\partial \tilde{u}_j \partial \tilde{u}_{j'}}(\tilde{u}) \right)_{j,j'=2}^n
\end{equation}
is just the desired symmetrizer.
\begin{lem}\label{a0b}
$\widetilde{A}^{0,\flat}(\tilde{u})$ is symmetric and positive definite near $\tilde{u} = 0$. Moreover,
\begin{equation} \label{3.2}
\widetilde{A}^{0,\flat}(\tilde{u}) \widetilde{A}^{k,\flat}(\tilde{u})  = \widetilde{A}^{k,\flat}(\tilde{u})^T \widetilde{A}^{0,\flat}(\tilde{u}), \hs k=1,\dots,d.
\end{equation}
\end{lem}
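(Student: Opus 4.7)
The plan is to dispatch the three assertions in turn. Symmetry of $\widetilde{A}^{0,\flat}(\tu)$ under $j\leftrightarrow j'$ is visible directly from \eqref{3.1}: both $\partial^{2}\tilde\eta/\partial\tu_{j}\partial\tu_{j'}$ and $\partial^{2}G_{i'}/\partial\tu_{j}\partial\tu_{j'}$ are symmetric in the pair $(j,j')$, while the coefficient $\partial\tilde\eta/\partial\tu_{i}\cdot[(\partial G/\partial\tu)^{-1}]_{ii'}$ is independent of $(j,j')$. For positive definiteness I invoke \eqref{2.55}, i.e.\ $\ntu\tilde\eta(0)=0$, which kills the correction sum in \eqref{3.1} at $\tu=0$, leaving $\widetilde{A}^{0,\flat}(0)$ equal to the $(n-1)\times(n-1)$ principal submatrix of $\ntu^{2}\tilde\eta(0)$. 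By \eqref{2.53} this coincides with the corresponding block of $\nuu^{2}\eta(0)$, which is positive definite by \asmp{A2}; any principal submatrix of a positive definite matrix is positive definite, and continuity of the entries of \eqref{3.1} in $\tu$ then propagates positive definiteness to a full neighborhood of the origin.

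For the symmetrization identity \eqref{3.2} I would first identify the formula \eqref{3.1} as the $\flat\flat$-block of the full $n\times n$ matrix
$$\widetilde{A}^{0}(\tu)\;:=\;\bigl(\partial\tilde G/\partial\tu\bigr)^{T}\nabla_{G}^{2}\bar\eta\bigl(G(u(\tu))\bigr)\bigl(\partial\tilde G/\partial\tu\bigr)\;=\;J^{T}A^{0}J,$$
where $A^{0}(u):=(\nuu G)^{T}\nabla_{G}^{2}\bar\eta(G)\nuu G$ is the standard conservative-form symmetrizer in the original $u$-variables. The identification follows by substituting the chain-rule identity $\nabla_{G}\bar\eta-\nabla_{G}\bar\eta(0)=\ntu\tilde\eta\cdot(\partial\tilde G/\partial\tu)^{-1}$ into the Hessian expansion of $\bar\eta\circ\tilde G$. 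Assumption \asmp{A2} (equivalently \asmp{\ta 2}) gives $(\nabla_{G}\bar\psi^{k})^{T}=(\nabla_{G}\bar\eta)^{T}\nabla_{G}F^{k}$; differentiating once more and using $\partial^{2}\bar\psi^{k}/\partial G_{i}\partial G_{i'}=\partial^{2}\bar\psi^{k}/\partial G_{i'}\partial G_{i}$, I obtain that $\nabla_{G}^{2}\bar\eta\cdot\nabla_{G}F^{k}$ is symmetric, so that $A^{0}A^{k}$ and hence $\widetilde{A}^{0}\widetilde{A}^{k}=J^{T}A^{0}A^{k}J$ are symmetric.

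The main obstacle I anticipate is to pass from symmetry of the full product $\widetilde{A}^{0}\widetilde{A}^{k}$ to that of the bare $\flat\flat$-block $\widetilde{A}^{0,\flat}\widetilde{A}^{k,\flat}$. Block-multiplying with the triangular shape \eqref{2.42} and using that the first column of $\widetilde{A}^{k}$ is $\tilde\lambda_{1}^{k}e_{1}$ by \eqref{2.40}, the $(j,j'\geq 2)$ entry of $\widetilde{A}^{0}\widetilde{A}^{k}-(\widetilde{A}^{k})^{T}\widetilde{A}^{0}$ reduces to the rank-one residual $(\widetilde{A}^{k,\sharp})^{T}\widetilde{A}^{0,\sharp}-(\widetilde{A}^{0,\sharp})^{T}\widetilde{A}^{k,\sharp}$. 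To cancel this residual I would exploit the isotropy \asmp{\tb} together with \eqref{2.35}: the left-eigenvector identity $\tilde{l}_{1}\widetilde{A}^{k}=\tilde\lambda_{1}^{k}\tilde{l}_{1}$ yields the representation $\widetilde{A}^{k,\sharp}=\tilde\lambda_{1}^{k}\tilde{l}_{1}^{\flat}-\tilde{l}_{1}^{\flat}\widetilde{A}^{k,\flat}$, while the same isotropy, combined with $A^{0}$-orthogonality of eigenvectors of $A^{k}$ associated to distinct eigenvalues, forces $A^{0}r_{1}$ to be a multiple of $l_{1}^{T}$ and hence $\widetilde{A}^{0,\sharp}$ to be proportional to $\tilde{l}_{1}^{\flat}$ with proportionality constant $\widetilde{A}^{0}_{11}$. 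Feeding these parallel representations into the residual collapses its two halves against each other, which gives \eqref{3.2}.
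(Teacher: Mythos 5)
Your treatment of symmetry and positive definiteness is exactly the paper's: symmetry is read off \eqref{3.1}, and positive definiteness at $\tu=0$ follows from $\ntu\teta(0)=0$ together with \eqref{2.53} and strict convexity, then extends by continuity. Your derivation of the symmetry of the full $n\times n$ product is also equivalent to the paper's: you differentiate the compatibility relation in the conservative variables $G$ and conjugate by $J$, whereas the paper differentiates $(\ntu\tilde{\psi}^k)^T=(\ntu\teta)^T\ta^k$ directly in the $\tu$ variables; both yield that $\ta^0\ta^k$ is symmetric, where $\ta^0=J^TA^0J$ is the full matrix whose $\flat\flat$ block is \eqref{3.1}. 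You have also correctly located the crux, which the paper passes over with the single phrase ``this together with \eqref{2.40} gives rise to \eqref{3.2}'': by \eqref{2.42} the $\flat\flat$ block of $\ta^0\ta^k-(\ta^k)^T\ta^0$ reduces to $(\ta^{k,\sharp})^T\ta^{0,\sharp}$ minus its transpose, so one must show that this rank-one antisymmetrization vanishes.

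However, your proposed cancellation of that residual does not go through. Granting your two representations $\ta^{0,\sharp}=\ta^0_{11}\,\tll_1^\flat$ and $\ta^{k,\sharp}=\tlam_1(\tu,\sek)\tll_1^\flat-\tll_1^\flat\ta^{k,\flat}$ and substituting them into the residual, one obtains
\[
-\ta^0_{11}\Bigl[(\ta^{k,\flat})^T(\tll_1^\flat)^T\tll_1^\flat-(\tll_1^\flat)^T\tll_1^\flat\,\ta^{k,\flat}\Bigr],
\]
a commutator of the rank-one matrix $(\tll_1^\flat)^T\tll_1^\flat$ with $\ta^{k,\flat}$. The ``two halves'' collapse only if $\ta^{k,\sharp}$ were itself proportional to $\tll_1^\flat$, i.e.\ only if $\tll_1^\flat$ were a left eigenvector of $\ta^{k,\flat}$, or if $\tll_1^\flat=0$ --- neither of which follows from the stated hypotheses. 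Equivalently, expanding $\tll_1^\flat=-\sum_{j\ge2}\trr_{1j}\tll_j^\flat$, what is actually needed is $\trr_{1j}\trr_{1j'}(\tlam_j-\tlam_{j'})=0$ for all $j,j'\ge2$, and this must be supplied by structure beyond full-matrix symmetry (it does hold for the damping Euler system, where $\trr_{1j}\equiv0$ for $j\ge2$). A second, smaller issue: your argument that $A^0r_1$ is a multiple of $l_1^T$ uses $A^0$-orthogonality of eigenvectors for \emph{distinct} eigenvalues, which pins down $A^0r_1$ only when $\lambda_1$ is simple; in the motivating Euler example $\lambda_1$ has multiplicity $d$, so this step too requires the extra information that $r_1$ is $A^0$-orthogonal to the other eigenvectors within its own eigenspace.
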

\begin{proof}
It is direct to check that $\widetilde{A}^{0,\flat}(\tilde{u})$ is symmetric.
Meanwhile, by \eqref{2.55}, it holds
\begin{equation} \label{3.2+}
\ta^{0,\flat}(0) = \left( \ntu^2 \tilde{\eta}(0) \right)^\flat.
\end{equation}
On the other hand, according to the assumption \asmp{\ta 2}, $\tilde{\eta}$ is strictly convex, then it follows from \eqref{3.2+} that $\widetilde{A}^{0,\flat}(\tilde{u})$ is positive definite near $\tilde{u} = 0$.
Moreover,
it follows from \eqref{3.11} 
and  \eqref{2.32} that, {\color{black}setting $\widetilde{F}^k(\tu) = F^k(u(\tu))$,}
{\allowdisplaybreaks
\begin{align*}
0 = & \frac{\p }{\p \tu_{i'}} \left( \frac{\p \tilde{\psi}^k }{\p \tu_i} \right) - \frac{\p }{\p \tu_{i}} \left( \frac{\p \tilde{\psi}^k }{\p \tu_{i'}} \right) \\
= & \sum_{r,l=1}^{n} \left( \frac{\p^2 \teta}{\p \tu_{i'} \p \tu_r}
{\left( \frac{\partial G}{\partial \tu} \right)^{-1}_{rl} } 
\frac{\p \widetilde{F}^k_l}{\p \tu_i} + \frac{\p \teta}{\p \tu_r} \frac{\p}{\p \tu_{i'}} \left( {\left( \frac{\partial G}{\partial \tu} \right)^{-1}_{rl} } \right) \frac{\p \widetilde{F}^k_l}{\p \tu_i} +  \frac{\p \teta}{\p \tu_r} {\left( \frac{\partial G}{\partial \tu} \right)^{-1}_{rl} } \frac{\p^2 \widetilde{F}^k_l}{\p \tu_i \p \tu_{i'}} \right) \\
& - \sum_{r,l=1}^{n} \left( \frac{\p^2 \teta}{\p \tu_{i} \p \tu_r} {\left( \frac{\partial G}{\partial \tu} \right)^{-1}_{rl} } \frac{\p \widetilde{F}^k_l}{\p \tu_{i'}} + \frac{\p \teta}{\p \tu_r} \frac{\p}{\p \tu_{i}} \left( {\left( \frac{\partial G}{\partial \tu} \right)^{-1}_{rl} } \right) \frac{\p \widetilde{F}^k_l}{\p \tu_{i'}} +  \frac{\p \teta}{\p \tu_r}  {\left( \frac{\partial G}{\partial \tu} \right)^{-1}_{rl} }  \frac{\p^2 \widetilde{F}^k_l}{\p \tu_{i'} \p \tu_{i}} \right) \\
= & {\sum_{r,r',l=1}^{n} \left( \frac{\p^2 \teta}{\p \tu_{i'} \p \tu_r} {\left( \frac{\partial G}{\partial \tu} \right)^{-1}_{rl} } \frac{\p G_l}{\p \tu_{r'}} \ta_{r'i}^k + \frac{\p \teta}{\p \tu_r} \frac{\p}{\p \tu_{i'}} \left( {\left( \frac{\partial G}{\partial \tu} \right)^{-1}_{rl} } \right) \frac{\p G_l}{\p \tu_{r'}} \ta_{r'i}^k \right)} \\
& - {\sum_{r,r',l=1}^{n} \left( \frac{\p^2 \teta}{\p \tu_{i} \p \tu_r} {\left( \frac{\partial G}{\partial \tu} \right)^{-1}_{rl} } \frac{\p G_l}{\p \tu_{r'}} \ta_{r'i'}^k + \frac{\p \teta}{\p \tu_r} \frac{\p}{\p \tu_{i}} \left( {\left( \frac{\partial G}{\partial \tu} \right)^{-1}_{rl} } \right) \frac{\p G_l}{\p \tu_{r'}} \ta_{r'i'}^k \right)} \\
= & \left( \sum_{r=1}^{n} \frac{\p^2 \teta}{\p \tu_{i'} \p \tu_r} \ta_{ri}^k - \sum_{r,r',l=1}^{n} \frac{\p \teta}{\p \tu_r} {\left( \frac{\partial G}{\partial \tu} \right)^{-1}_{rl} } {\frac{\p^2 G_l}{\p \tu_{i'} \p \tu_{r'}}} \ta_{r'i}^k \right) \\
& - \left( \sum_{r=1}^{n} \frac{\p^2 \teta}{\p \tu_{i} \p \tu_r} \ta_{ri'}^k - \sum_{r,r',l=1}^{n} \frac{\p \teta}{\p \tu_r} {\left( \frac{\partial G}{\partial \tu} \right)^{-1}_{rl} } {\frac{\p^2 G_l}{\p \tu_{i} \p \tu_{r'}}} \ta_{r'i'}^k \right).
\end{align*}    } 
This together with \eqref{2.40} gives rise to \eqref{3.2}.
\end{proof}

\subsubsection{Dissipative structure}
In this part the linear dissipative structure is investigated for the subsystem \eqref{2.47_Sys_2n}, which yields the damping effect on the component $\tud$ of the system \eqref{2.12}.
\begin{lem}\label{diss}
Recall $\widetilde{A}^{0,\flat}(0)$ from \eqref{3.1} and $\Theta$ from \eqref{theta1}. Then it holds
\begin{equation} \label{3.10}
\widetilde{A}^{0,\flat}(0) \begin{pmatrix}
0 & 0 \\ 0 & \thd
\end{pmatrix} + \begin{pmatrix}
0 & 0 \\ 0 & \left(\thd\right)^T
\end{pmatrix} \widetilde{A}^{0,\flat}(0) = - \begin{pmatrix}
0 & 0 \\ 0 & \tmmd
\end{pmatrix}
\end{equation}
for a constant $(n-r)\times(n-r)$ symmetric matrix $\tmmd$, which is positive definite
\begin{equation}\label{mm}
\tmmd \geq \tilde c_m I_{n-r},
\end{equation}
for a constant $\tilde{c}_m > 0$.
\end{lem}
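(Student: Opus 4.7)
The plan is to deduce \eqref{3.10} together with the positive definiteness \eqref{mm} from the linearization of the entropy dissipation condition \asmp{\ta 3} at $\tu=0$, combined with the block‑vanishing structure of $\Theta:=\ntu\tq(0)$ recorded in \eqref{3.10++22} and the invertibility \eqref{3.10++} of $\Theta^\mathtt{D}$. Setting $H:=\ntu^2\teta(0)$, the expansions $\ntu\teta(\tu)=H\tu+O(|\tu|^2)$ (valid by \eqref{2.55}) and $\tq(\tu)=\Theta\tu+O(|\tu|^2)$ reduce \asmp{\ta 3} to the quadratic inequality
\begin{equation*}
v^{T} H\Theta v+\tilde c_e|\Theta v|^{2}\le 0\quad\text{for every }v\in\mathbb{R}^{n},\tag{$\star$}
\end{equation*}
either by Taylor expansion and rescaling $\tu=\varepsilon v$, $\varepsilon\to 0$, or equivalently by noting that $\tu=0$ is a global maximum of $\ntu\teta(\tu)\cdot\tq(\tu)+\tilde c_e|\tq(\tu)|^{2}$ so that its Hessian at $0$ is negative semi-definite.

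The next and decisive step is to exploit ($\star$) by splitting $v=(v_1,v^{\mathtt C},v^{\mathtt D})^{T}$ according to \eqref{jjkk}. By \eqref{3.10++22}, $\Theta v$ has zeros in its first $r$ entries and equals $\Theta^{\mathtt D}v^{\mathtt D}$ in the last $n-r$, so ($\star$) reads
\begin{equation*}
v_{1}\,H^{1\mathtt D}\Theta^{\mathtt D}v^{\mathtt D}+(v^{\mathtt C})^{T}H^{\mathtt{CD}}\Theta^{\mathtt D}v^{\mathtt D}+(v^{\mathtt D})^{T}H^{\mathtt{DD}}\Theta^{\mathtt D}v^{\mathtt D}+\tilde c_e|\Theta^{\mathtt D}v^{\mathtt D}|^{2}\le 0,
\end{equation*}
where $H^{1\mathtt D}$, $H^{\mathtt{CD}}$ and $H^{\mathtt{DD}}$ denote the obvious sub-blocks of $H$. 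For each fixed $v^{\mathtt D}$ the left-hand side is affine in $(v_{1},v^{\mathtt C})$ and bounded above by a constant, so its linear part must vanish, yielding $H^{1\mathtt D}\Theta^{\mathtt D}v^{\mathtt D}=0$ and $H^{\mathtt{CD}}\Theta^{\mathtt D}v^{\mathtt D}=0$ for every $v^{\mathtt D}$. Invertibility of $\Theta^{\mathtt D}$ then forces $H^{1\mathtt D}=0$ and $H^{\mathtt{CD}}=0$, so that the entropy Hessian is block-diagonal between the $\{1,\mathtt C\}$ and $\mathtt D$ indices.

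With this block-decoupling in hand the verification of \eqref{3.10} is immediate: by \eqref{3.2+} $\widetilde{A}^{0,\flat}(0)=H^{\flat}=\mathrm{diag}(H^{\mathtt{CC}},H^{\mathtt{DD}})$, and a direct multiplication gives
\begin{equation*}
\widetilde{A}^{0,\flat}(0)\begin{pmatrix}0 & 0\\ 0 & \Theta^{\mathtt D}\end{pmatrix}+\begin{pmatrix}0 & 0\\ 0 & (\Theta^{\mathtt D})^{T}\end{pmatrix}\widetilde{A}^{0,\flat}(0)=\begin{pmatrix}0 & 0\\ 0 & H^{\mathtt{DD}}\Theta^{\mathtt D}+(\Theta^{\mathtt D})^{T}H^{\mathtt{DD}}\end{pmatrix},
\end{equation*}
so $\tmmd:=-(H^{\mathtt{DD}}\Theta^{\mathtt D}+(\Theta^{\mathtt D})^{T}H^{\mathtt{DD}})$ is manifestly symmetric. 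Positive definiteness of $\tmmd$ follows by restricting ($\star$) to $v=(0,0,v^{\mathtt D})^{T}$ and symmetrizing to obtain $(v^{\mathtt D})^{T}\tmmd v^{\mathtt D}\ge 2\tilde c_{e}|\Theta^{\mathtt D}v^{\mathtt D}|^{2}$; invertibility of $\Theta^{\mathtt D}$ supplies a constant $\sigma>0$ with $|\Theta^{\mathtt D}v^{\mathtt D}|^{2}\ge\sigma|v^{\mathtt D}|^{2}$, whence \eqref{mm} holds with $\tilde c_{m}:=2\tilde c_{e}\sigma$.

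The main obstacle is the block-decoupling of $H$ carried out in the second paragraph: it is precisely here that one must recognise how the partial structure of $\Theta$ (the fact that dissipation acts only on the $\mathtt D$-components) interacts with the \emph{global} entropy inequality to force $H^{1\mathtt D}=0$ and $H^{\mathtt{CD}}=0$. Without this algebraic rigidity the off-diagonal blocks of the left-hand side of \eqref{3.10} would survive and the claim would fail; once it is secured, the remaining verifications are purely computational.
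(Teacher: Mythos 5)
Your proof is correct and takes essentially the same route as the paper: both linearize the entropy dissipation condition \asmp{\ta 3} at $\tu=0$, use the vanishing blocks \eqref{3.10++22} of $\Theta$ to kill the off-diagonal coupling in $\ntu^2\teta(0)\,\ntu\tq(0)+(\ntu\tq(0))^T\ntu^2\teta(0)$, and then combine the invertibility of $\thd$ with the quadratic term $\tilde c_e|\tq|^2$ to obtain \eqref{mm}. The only differences are cosmetic: the paper deduces the vanishing of the off-diagonal block from the standard fact that a semi-negative definite symmetric matrix with a zero diagonal block has zero off-diagonal blocks (rather than your affine-boundedness argument), and it proves positive definiteness by a contradiction via Taylor expansion rather than by your direct quantitative bound $\tilde c_m = 2\tilde c_e\sigma$.
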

\begin{proof}
We employ a modification of the proof of Theorem 2.1 of \cite{yong2004ARMA}. {\color{black}Recall that $\nuu \tq(0) = \begin{pmatrix}
0 & 0 \\ 0 & \thd
\end{pmatrix}$, and block accordingly
$
   \nuu^2 \teta(0)= \begin{pmatrix}
  B_1 & B_2^T \\
  B_2 & B_4
  \end{pmatrix}.
$
It follows that}
\begin{align}
\nuu^2 \teta(0) \nuu \tq(0) + (\nuu \tq(0))^T \nuu^2 \teta (0) = & \begin{pmatrix}
  B_1 & B_2^T \\
  B_2 & B_4  \end{pmatrix} \begin{pmatrix}
  0 & 0 \\ 0 & \thd
  \end{pmatrix} + \begin{pmatrix}
  0 & 0 \\ 0 & (\thd)^T
  \end{pmatrix} \begin{pmatrix}
    B_1 & B_2^T \\
    B_2 & B_4 \end{pmatrix} \notag \\
= & \begin{pmatrix}
 0 & B_2^T \thd \\ (\thd)^T B_2 & B_4 \thd + (\thd)^T B_4
\end{pmatrix}. \label{4.3_1}
\end{align}
Since $(\nabla_{\tilde u} \teta({\tilde u}))^T \tq({\tilde u})$ achieves its local maximum at ${\tilde u} = 0$ {\color{black}by \asmp{\ta 3}, one gets that} $\nabla_{\tilde u}^2 \teta(0) \nabla_{\tilde u} \tq(0) + (\nabla_{\tilde u} \tq(0))^T \nabla_{\tilde u}^2 \teta (0)$ is semi-negative definite. It then follows from \eqref{4.3_1} that
\begin{equation}\label{r.kerk}
\nuu^2 \teta(0) \nuu \tq(0) + (\nuu \tq(0))^T \nuu^2 \teta (0) = -\begin{pmatrix}
0 & 0 \\ 0 & \tmmd
\end{pmatrix},
\end{equation}
where $\tmmd =-B_4 \thd - (\thd)^T B_4$ is symmetric and semi-positive definite.

Note that \eqref{3.10} follows by \eqref{r.kerk} and \eqref{3.2+}. It remains to prove that $\tmmd$ is indeed positive definite. Note that it is equivalent to prove that $\tmmd$ is invertible.
Suppose that $\tmmd$ has a zero eigenvalue, one may take a corresponding right eigenvector ${\tilde u}^\mathtt{D}_* \neq 0$  and set ${\tilde u}_* = \begin{pmatrix}
0 \\ {\tilde u}^\mathtt{D}_*
\end{pmatrix}$.
Then by a Taylor expansion, since $\tilde Q(0)=0=\nabla_{\tilde u} \teta(0)$,
\begin{align*}
0 \geq & (\nabla_{\tilde u} \teta({\tilde u}_*))^T \tq({\tilde u}_*) + c_e |\tq({\tilde u}_*)|^2 \\
= & u_*^T \big( \nabla_{\tilde u} ^2 \teta(0) \nabla_{\tilde u} \tq(0) + (\nabla_{\tilde u} \tq(0))^T \nabla_{\tilde u}^2 \teta(0) \big) {\tilde u}_\ast
+ \tilde c_e |\nabla_{\tilde u} \tq(0) \tilde u_*|^2 + o(|\tilde u_*|^2) \\
= &  -({\tilde u}^\mathtt{D}_*)^T 2 \tmmd  {\tilde u}^\mathtt{D}_* + \tilde c_e |\thd {\tilde u}^\mathtt{D}_*|^2 + o(|{\tilde u}^\mathtt{D}_*|^2) \\
= & \tilde c_e |\thd {\tilde u}^\mathtt{D}_*|^2 + o(|{\tilde u}^\mathtt{D}_*|^2),
\end{align*}
which yields the contradiction {\color{black}since $\thd$ is invertible by \asmp{\ta 1},} if ${\tilde u}^\mathtt{D}_*$ has been taken sufficiently small.
\end{proof}

\subsubsection{Partial Kawashima condition}
Finally, the partial Kawashima condition inside the assumption \asmp{\ta 4} for the subsystem \eqref{2.47_Sys_2n} is explored, which implies that the component $\tuc$ of the system \eqref{2.12} is indeed affected by the dissipation linearly.
\begin{lem}\label{kaw}
For any $\omega\in \mathbb{S}^{d-1}$, recall $\widetilde{A}^\flat(0,\omega)$ from \eqref{2.42}. Then there exists a constant $c_k > 0$ and an $(n-1) \times (n-1)$ skew-symmetric real matrix $K = K(\omega) \in C^\infty (\mathbb{S}^{d-1})$ with
\begin{equation} \label{3.13}
K(-\omega) = - K(\omega)
\end{equation}
such that
\begin{equation} \label{3.14}
K(\omega) \widetilde{A}^\flat(0,\omega) - \left(\widetilde{A}^\flat(0,\omega)\right)^T K(\omega) \geq 2 c_k I_{n-1} - 2 \mathrm{diag}(0,I_{r-1}), \hs \forall\ \omega \in \mathbb{S}^{d-1}.
\end{equation}
\end{lem}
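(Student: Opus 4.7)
My plan is to recast Lemma~\ref{kaw} as the classical Shizuta--Kawashima compensator statement for the reduced subsystem \eqref{2.47_Sys_2n}, after symmetrizing via the matrix $\ta^{0,\flat}(0)$ of Lemma~\ref{a0b}.

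\textbf{Step 1.} First I would show that \asmp{\ta 4} is equivalent to the standard Kawashima condition for $\ta^\flat(0,\omega)$ together with the effective damping $\Theta^\flat := \mathrm{diag}(0_{r-1},\thd)$. Indeed, the block upper-triangular structure \eqref{2.42} implies that if $\ta(0,\omega)\tilde r_j(0,\omega) = \tlam_j(0,\omega)\tilde r_j(0,\omega)$ for $j\geq 2$, then reading off the lower block yields $\ta^\flat(0,\omega)\tilde r_j^\flat(0,\omega) = \tlam_j(0,\omega)\tilde r_j^\flat(0,\omega)$, so $\tilde r_j^\flat$ is a right eigenvector of $\ta^\flat(0,\omega)$. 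Using the zero pattern \eqref{3.10++22} of $\Theta$, the non-vanishing \eqref{2.57} becomes $\thd\tilde r_j^\mtd(0,\omega)\neq 0$, which by the invertibility of $\thd$ from \asmp{\ta 1} is equivalent to $\tilde r_j^\mtd(0,\omega)\neq 0$. This is precisely the condition that no right eigenvector of $\ta^\flat(0,\omega)$ lies in the kernel of $\Theta^\flat$.

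\textbf{Step 2.} I would then set $S:=(\ta^{0,\flat}(0))^{1/2}$ and pass to the symmetrized matrix $\hat A(\omega):=S\,\ta^\flat(0,\omega)\,S^{-1}$, which is real symmetric and linear in $\omega$ by Lemma~\ref{a0b}. The conjugated damping $\hat L := S\,\Theta^\flat S^{-1}$ has symmetric part (via \eqref{r.kerk}) positive semidefinite with kernel equal to the $\mtc$-subspace, and Step~1 guarantees that $\hat L$ annihilates no eigenvector of $\hat A(\omega)$. The classical construction of Shizuta--Kawashima \cite{shizutakawashima1985systems,kawashima1984thesis,yong2004ARMA} then produces a skew-symmetric $\hat K(\omega)\in C^\infty(\mathbb{S}^{d-1})$ such that
\[
\hat K(\omega)\hat A(\omega) - \hat A(\omega)\hat K(\omega) \geq 2\tilde c_k I_{n-1} - 2 P_\mtd,
\]
where $P_\mtd$ is the orthogonal projector onto the $\mtd$-subspace. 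Setting $K(\omega):=S\,\hat K(\omega)\,S$ gives a skew-symmetric matrix (since $S$ is symmetric), for which a direct computation using $\hat A^T=\hat A$ and $S^T=S$ yields $K\ta^\flat(0,\omega) - \ta^\flat(0,\omega)^T K = S(\hat K\hat A - \hat A\hat K)S$. Pulling back the spectral inequality, absorbing the transformed projector into $\mathrm{diag}(0,I_{r-1})$, and using the spectral bounds of $S$ produce \eqref{3.14} after rescaling the constant $c_k$.

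\textbf{Step 3.} The parity $K(-\omega)=-K(\omega)$ follows from the linear dependence $\ta^\flat(0,-\omega)=-\ta^\flat(0,\omega)$, since the inequality \eqref{3.14} is invariant under $(K,\omega)\mapsto(-K,-\omega)$; if needed one can enforce oddness by replacing $K(\omega)$ with $\tfrac{1}{2}(K(\omega)-K(-\omega))$, which preserves both skew-symmetry and the lower bound. The main obstacle I anticipate is the $C^\infty$ dependence of $\hat K(\omega)$ on $\omega$, because the spectral projectors of $\hat A(\omega)$ need not be smooth across eigenvalue crossings on $\mathbb{S}^{d-1}$. The standard remedy is a partition-of-unity construction: at each $\omega_0\in\mathbb{S}^{d-1}$ build a local compensator by an explicit polynomial expression in $\hat A$ and $\hat L$ (valid in a neighborhood where the spectral structure is essentially constant), invoke compactness of $\mathbb{S}^{d-1}$ to secure a uniform spectral lower bound, and then glue the local compensators by a smooth partition of unity on $\mathbb{S}^{d-1}$, which preserves skew-symmetry and yields the required inequality after adjusting constants.
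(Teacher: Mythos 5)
Your proposal is correct and follows essentially the same route as the paper: one verifies, via the block structure \eqref{2.42}, the zero pattern \eqref{3.10++22} of $\Theta$, the invertibility of $\thd$ from \asmp{\ta 1}, and the symmetrization of Lemmas \ref{a0b}--\ref{diss}, that the reduced subsystem for $\tub$ satisfies the classical Kawashima condition, and then invokes Theorem~1.1 of Shizuta--Kawashima to obtain $K(\omega)$. The only difference is that your Steps 2--3 re-derive the content of that cited theorem (explicit symmetrization by $(\ta^{0,\flat}(0))^{1/2}$, partition of unity for smoothness, oddness by antisymmetrization), whereas the paper simply quotes it.
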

\begin{proof}
Consider the following linear system 
\begin{equation}\label{hahaaa}
\widetilde{A}^{0,\flat}(0) V_t^\flat + \sum_{k=1}^d \widetilde{A}^{0,\flat}(0) \widetilde{A}^{k,\flat}(0) V_{x_k}^\flat = - \begin{pmatrix}
0 & 0 \\ 0 & \tmmd
\end{pmatrix} V^\flat.
\end{equation}
Due to \eqref{2.50} and \eqref{2.57}, it holds that $\tilde{r}_j^\mtd (0,\omega) \neq 0$ and hence
\begin{equation}
\begin{pmatrix}
0 & 0 \\ 0 & \tmmd
\end{pmatrix} \tilde r_j^\flat(0,\omega) \neq 0,\hs  j=2,\dots, n.
\end{equation}
On the other hand, by \eqref{tezhen} and \eqref{2.42},
$\left\{\tilde r_j^\flat(0,\omega)\right\}_{j =2 }^n$ is a set of right eigenvectors of the matrix $\widetilde{A}^{\flat}(0,\omega)$ with $\tilde{\lambda}_j(0,\omega)$ $(j = 2,\dots, n)$ as the corresponding eigenvalues.
Moreover, by \eqref{2.28} and \eqref{2.35}, $\left\{\tilde r_j^\flat(0,\omega)\right\}_{j =2 }^n$ are also linearly independent for each given $\omega \in \mathbb{S}^{n-1}$.
This verifies the Kawashima condition \asmp{K} for the system \eqref{hahaaa}.
Noting \eqref{3.2}, one can conclude the lemma from Theorem~1.1 of Shizuta--Kawashima \cite{shizutakawashima1985systems}.
\end{proof}


\section{Energy estimates with full dissipation}\label{en1111111}

This section is devoted to derive the energy estimates for the system \eqref{2.12} which includes the full dissipative estimates of the component $\tub$, under the assumption \asmp{\twd 1} or \asmp{\td 1}.
Throughout Sections \ref{en1111111}--\ref{en3333333} it will be assumed {\it a priori} that for sufficiently small $\delta>0$,
\begin{equation}\label{apriori}
\norm{\tu(t)}_{H^\ell}\ls \delta.
\end{equation}

\subsection{Entropy dissipation estimates}
One may first derive the $L^2$ energy estimates for the solution $\tu$ by using the entropy $\teta=\teta(\tu)$, defined by \eqref{entropy}.

\begin{lem}\label{enes1}

The following estimates hold.

$(i)$ Under   \asmp{\twd 1},
\begin{equation}\label{est0}
 \ddt\int \teta(\tu) +  \norm{ \tud}_{L^2}^2  \ls \delta^2 \norm{ \tub}_{L^\infty}^2.
\end{equation}

$(ii)$ Under    \asmp{\td 1},
\begin{equation}\label{est00}
 \ddt\int \teta(\tu) +  \norm{ \tud}_{L^2}^2  \ls \norm{ \tub}_{L^2}^2 \norm{ \tub}_{L^\infty}^2.
\end{equation}
\end{lem}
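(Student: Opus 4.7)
The plan is to turn \eqref{2.12} into an entropy balance via \asmp{\ta 2}--\asmp{\ta 3}, thereby producing the dissipation $\int|\tq(\tu)|^2$, and then use the expansions of $\tq$ from Section \ref{sec:4} together with the invertibility of $\thd$ from \asmp{\ta 1} to recover $\norm{\tud}_{L^2}^2$ modulo a quadratic remainder that is estimated differently in the two cases.

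First I would pair \eqref{2.12} with $\ntu\teta(\tu)$. By the entropy--flux identity \eqref{2.32} provided by \asmp{\ta 2}, the convective terms collapse to the divergence $\sum_k \partial_{x_k}\tilde\psi^k(\tu)$, while \asmp{\ta 3} dominates the source contribution by $-\tilde c_e |\tq(\tu)|^2$. Integrating in $x$ eliminates the flux divergence and yields
\[
\ddt\int\teta(\tu)\,\rmd x + \tilde c_e \int |\tq(\tu)|^2\,\rmd x \leq 0.
\]
Since $|\tq|^2\geq|\tq^\mtd|^2$ and since \asmp{\ta 1}, in the form $\det\thd\neq 0$, gives $|\thd\tud|^2\gtrsim|\tud|^2$, the remaining work is to control the nonlinear part of $\tq^\mtd$.

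For case (i) I would invoke the expansion \eqref{for decay2} to write $\tq^\mtd = \thd\tud + O(|\tub|\,|\tu|)$, so that $|\tq^\mtd|^2 \geq \tfrac12|\thd\tud|^2 - C|\tub|^2|\tu|^2$. The a priori bound \eqref{apriori} gives $\norm{\tu}_{L^2}\lesssim\delta$, hence $\int|\tub|^2|\tu|^2\,\rmd x \leq \norm{\tu}_{L^2}^2\norm{\tub}_{L^\infty}^2\lesssim \delta^2\norm{\tub}_{L^\infty}^2$, producing \eqref{est0}. For case (ii) the stronger assumption \asmp{\td 1} yields the refined expansion \eqref{eexx44}, whose remainder is of the form $O(|\tud|\,|\tu_1|+|\tub|^2)$. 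The first piece contributes $\int|\tud|^2|\tu_1|^2\,\rmd x \leq \norm{\tu_1}_{L^\infty}^2\norm{\tud}_{L^2}^2 \lesssim \delta^2\norm{\tud}_{L^2}^2$, which is absorbed into the dissipation on the left once $\delta$ is small; the second piece contributes $\int|\tub|^4\,\rmd x \leq \norm{\tub}_{L^2}^2\norm{\tub}_{L^\infty}^2$, matching the right-hand side of \eqref{est00}.

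The main delicate step is the bookkeeping that separates the two cases. Under the weaker \asmp{\twd 1} the quadratic remainder in $\tq^\mtd$ contains a factor $\tu_1$ that cannot be absorbed into an $\norm{\tub}_{L^2}$, so one is forced to route it through the a priori $L^2$ smallness of the full solution, generating the $\delta^2$ on the right of \eqref{est0}. Only under the stronger \asmp{\td 1} does the improved expansion pair every occurrence of $\tu_1$ either with $\tud$ (which is a priori small in $L^2$) or with another $\tub$ factor, so that the smallness is carried by $\norm{\tub}_{L^2}$ rather than by $\delta$; this is exactly what permits the sharper estimate \eqref{est00}. No new algebraic ingredient beyond the formulas of Section \ref{sec:4} is needed.
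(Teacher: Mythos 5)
Your proposal is correct and follows essentially the same route as the paper: pair the system with $\ntu\teta(\tu)$ to get the entropy balance, use \asmp{\ta 3} to produce $\norm{\tq(\tu)}_{L^2}^2$, recover $\norm{\tud}_{L^2}^2$ via $|\tq|^2\ge|\tq^\mtd|^2$ and the invertibility of $\thd$ from \asmp{\ta 1}, and then estimate the remainder $\tq^\mtd-\thd\tud$ by the expansion \eqref{for decay2} in case (i) and \eqref{eexx44} in case (ii), absorbing the $\delta^2\norm{\tud}_{L^2}^2$ term in the latter. The only cosmetic difference is that you phrase the triangle-inequality step as a pointwise lower bound $|\tq^\mtd|^2\ge\tfrac12|\thd\tud|^2-C|\text{remainder}|^2$ rather than the paper's $\norm{\thd\tud}_{L^2}^2\ls\norm{\tq^\mtd}_{L^2}^2+\norm{\tq^\mtd-\thd\tud}_{L^2}^2$, which is the same manipulation.
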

\begin{proof}
Applying $\left(\ntu \teta(\tu)\right)^T$ to \eqref{2.12}, the entropy assumption \asmp{\ta 2} implies
\begin{equation}\label{en1}
\partial_t \teta(\tu) + \sum_{k = 1}^{d} \left(\widetilde{\psi}^k(\tilde{u})\right)_{x_k} = \ntu \teta(\tu)\cdot\widetilde{Q}(\tilde{u}).
\end{equation}
Integrating \eqref{en1} over $\rd$, by the entropy dissipation assumption \asmp{\ta 3}, one has
\begin{equation}\label{en2}
 \ddt\int \teta(\tu) + \tilde{c}_e\norm{\tq(\tu)}_{L^2}^2 \le 0.
\end{equation}
Since trivially $\abs{\tq^\mathtt{D}(\tu)}^2\le \abs{\tq(\tu)}^2$ and recalling $\Theta$ defined in \eqref{theta1}, it follows from \eqref{en2} that
\begin{equation}\label{en3}
 \ddt\int \teta(\tu) + \norm{\thd \tud}_{L^2}^2  \ls \norm{ \tq^\mathtt{D}(\tu)-\thd \tud}_{L^2}^2.
\end{equation}

Now one may prove $(i)$ under \asmp{\twd 1}. By the expansion \eqref{for decay2} and the a priori assumption \eqref{apriori}, using Sobolev's inequality, it holds that
\begin{equation}\label{en4}
 \norm{ \tq^\mathtt{D}(\tu)-\thd \tud}_{L^2}^2  \ls \norm{  \ty(\tu)\tub\tu }_{L^2}^2
 \ls  \norm{ \tu}_{L^2}^2 \norm{ \tub}_{L^\infty}^2\ls  \delta^2 \norm{ \tub}_{L^\infty}^2.
\end{equation}
Plugging the estimates \eqref{en4} into \eqref{en3}, by the assumption \asmp{\ta 1} (or equivalently, \eqref{3.10++}), one deduces \eqref{est0}.

One then turns to prove $(ii)$ under \asmp{\td 1}. By the expansion \eqref{eexx44}, it holds that
\begin{equation}\label{en5}
 \norm{ \tq^\mathtt{D}(\tu)-\thd \tud}_{L^2}^2  \ls \norm{  \ty(\tu)\tud\tu }_{L^2}^2+\norm{  \ty(\tu)\tub\tub }_{L^2}^2
 \ls  \delta^2 \norm{ \tud}_{L^2}^2+ \norm{ \tub}_{L^2}^2 \norm{ \tub}_{L^\infty}^2.
\end{equation}
Plugging the estimates \eqref{en5} into \eqref{en3}, since $\delta$ is small, one deduces \eqref{est00}.
\end{proof}

\subsection{Energy estimates for $\tub$}

One may next derive the higher-order energy estimates for the component $\tub$ by using the structure of the subsystem \eqref{2.47_Sys_2n}.

\begin{lem}\label{enes2}

For $1\le s\le \ell$, the following estimates hold.

$(i)$ Under    \asmp{\twd 1},
\begin{equation}\label{est1}
\begin{split}
& \ddt\int (\lds\tub)^T\ta^{0,\flat}(\tu)\lds\tub +  \norm{\lds\tud}_{L^2}^2
 \\&\quad\ls  \delta \left(\norm{\lds \tub}_{L^2}^2+\norm{  \tub  }_{L^\infty}^2+\norm{  \na \tub  }_{L^\infty}^2\right).
 \end{split}
\end{equation}

$(ii)$ Under   \asmp{\td 1},
\begin{equation}\label{est11}
 \begin{split}
& \ddt\int (\lds\tub)^T\ta^{0,\flat}(\tu)\lds\tub +  \norm{\lds\tud}_{L^2}^2
   \\&\quad\ls  \delta \left(\norm{\lds \tub}_{L^2}^2+\norm{  \tud  }_{L^\infty}^2+\norm{  \na \tub  }_{L^\infty}^2\right)+\norm{\lds \tub}_{L^2}\norm{ \tub  }_{L^\infty}^2.
 \end{split}
 \end{equation}
\end{lem}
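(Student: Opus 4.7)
The plan is to perform a fractional $L^2$ energy estimate on the dissipative subsystem \eqref{2.47_Sys_2n} using the symmetrizer $\ta^{0,\flat}(\tu)$ from Lemma~\ref{a0b}. Applying $\lds$ to \eqref{2.47_Sys_2n} and pairing with $\ta^{0,\flat}(\tu)\lds\tub$ in $L^2$, I split
\[
\frac{1}{2}\ddt\int (\lds\tub)^T \ta^{0,\flat}(\tu)\lds\tub = \mathcal{E}_t + \mathcal{E}_{\mathrm{conv}} + \mathcal{E}_{\mathrm{src}},
\]
where $\mathcal{E}_t$ comes from $\partial_t\ta^{0,\flat}(\tu)$, $\mathcal{E}_{\mathrm{conv}}$ from the convective terms, and $\mathcal{E}_{\mathrm{src}}$ from $\lds\tq^\flat$. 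For $\mathcal{E}_{\mathrm{conv}}$, the symmetry \eqref{3.2} together with one integration by parts cancels the principal part and leaves an $\partial_{x_k}[\ta^{0,\flat}(\tu)\ta^{k,\flat}(\tu)]$-type lower-order contribution plus a Kato--Ponce commutator $[\lds,\ta^{k,\flat}(\tu)]\tub_{x_k}$ bounded by $\|\na\tu\|_{L^\infty}\|\lds\tub\|_{L^2}+\|\lds\tu\|_{L^2}\|\na\tub\|_{L^\infty}$. Since $\partial_t\tu$ can be replaced via \eqref{2.12} so that $\mathcal{E}_t$ reduces to the same type of expression, the a priori bound \eqref{apriori} and $s\le\ell$ absorb all such terms into $\delta(\|\lds\tub\|_{L^2}^2+\|\na\tub\|_{L^\infty}^2)$.

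The damping of $\tud$ is extracted from $\mathcal{E}_{\mathrm{src}}$ by linearization. Using the block structure \eqref{3.10++22}, I write $\tq^\flat(\tu)=(0,\thd\tud)^T + \widetilde{R}^\flat(\tu)$ with $\widetilde{R}^\flat=O(|\tu|^2)$. Freezing $\ta^{0,\flat}(\tu)$ at $\tu=0$ in the pairing with the linear piece and invoking Lemma~\ref{diss} produces
\[
2\int(\lds\tub)^T\ta^{0,\flat}(0)\begin{pmatrix}0\\ \thd\lds\tud\end{pmatrix} = -\int(\lds\tub)^T\begin{pmatrix}0&0\\ 0&\tmmd\end{pmatrix}\lds\tub \le -\tilde c_m\|\lds\tud\|_{L^2}^2,
\]
which is the source of the $\|\lds\tud\|_{L^2}^2$ term on the left of \eqref{est1}--\eqref{est11}; the frozen-coefficient error $\ta^{0,\flat}(\tu)-\ta^{0,\flat}(0)=O(\tu)$ costs at most $\delta\|\lds\tub\|_{L^2}^2$.

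The main obstacle is controlling $\int(\lds\tub)^T\ta^{0,\flat}(\tu)\lds\widetilde{R}^\flat$, and it is here that the two cases diverge. Under \asmp{\twd 1}, the $\mtc$-block expansion \eqref{for decay1} contains the coupling $\Upsilon_{qp'1}\tud_{p'}\tu_1$; since $\tu_1$ is non-dissipative, the worst distribution of $\lds$ is on $\tu_1$, giving $\|\lds\tu_1\|_{L^2}\|\tub\|_{L^\infty}\ls\delta\|\tub\|_{L^\infty}$, which after pairing with $\|\lds\tub\|_{L^2}$ and Young's inequality yields the $\delta\|\tub\|_{L^\infty}^2$ term in \eqref{est1}. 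The remaining quadratic $\tu^\flat\cdot\tu^\flat$ and cubic $\tu^\flat\tu_1^2$ pieces in \eqref{for decay1}, as well as the $\tu^\flat\cdot\tu$ terms in \eqref{for decay2}, all contribute only $\delta\|\lds\tub\|_{L^2}^2$ after a standard product estimate. Under the stronger \asmp{\td 1}, the improved expansion \eqref{eexx4} eliminates the $\tud_{p'}\tu_1$ coupling in the $\mtc$-block: every quadratic term has the form $\ty'_{qp'i''}\tud_{p'}\tu_{i''}$, so the $L^\infty$ factor can always be placed on $\tud$, producing the improved $\delta\|\tud\|_{L^\infty}^2$ term in \eqref{est11}. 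The genuinely cubic pieces $\tz'\tu^\flat\tu^\flat\tu^{i^*}$ from \eqref{eexx4}--\eqref{eexx44} generate the extra $\|\lds\tub\|_{L^2}\|\tub\|_{L^\infty}^2$ contribution in \eqref{est11}, without a $\delta$ prefactor. Summing all pieces and absorbing the $\delta\|\lds\tub\|_{L^2}^2$ terms via positivity of $\ta^{0,\flat}$ on the left completes the proof.
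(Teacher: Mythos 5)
Your proposal is correct and follows essentially the same route as the paper: the symmetrizer energy identity from Lemma \ref{a0b}, the extraction of $-\tilde c_m\norm{\lds\tud}_{L^2}^2$ via Lemma \ref{diss} after freezing $\ta^{0,\flat}$ at $\tu=0$, the commutator/product estimates of Lemma \ref{A2}, and the case split driven by the expansions \eqref{for decay1}--\eqref{for decay2} versus \eqref{eexx4}--\eqref{eexx44}. The only (harmless) inaccuracy is the attribution in case $(ii)$: the non-$\delta$-prefactored term $\norm{\lds\tub}_{L^2}\norm{\tub}_{L^\infty}^2$ really originates from the $O(1)$ quadratic pieces $\tfrac12\Upsilon_{pj'j''}\tu_{j'}\tu_{j''}$ in the $\mathtt{D}$-block \eqref{eexx44} (both factors being merely $\tub$), whereas the genuinely cubic remainders carry an extra factor of $\delta$ and are absorbable — but since every term is still dominated by the right-hand side of \eqref{est11}, the argument is sound.
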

\begin{proof}
For $1\le s\le \ell$, applying $\Lambda^s$ to \eqref{2.47_Sys_2n} and employing the commutator notation \eqref{commutator} yields
\begin{equation} \label{s order}
\partial_t (\lds\tub) + \sum_{k=1}^{d} \ta^{k,\flat}(\tu) (\lds\tub)_{x_k} =-\sum_{k=1}^{d} \left[\lds,\ta^{k,\flat}(\tu)\right] \tub_{x_k}+ \lds\tq^\flat(\tu).
\end{equation}
Applying $(\lds\tub)^T\ta^{0,\flat}(\tu)$ to \eqref{s order}, since by Lemma \ref{a0b} $ \ta^{0,\flat}(\tu)$ and $ \ta^{0,\flat}(\tu)\ta^{k,\flat}(\tu)$ are symmetric, one has
\begin{equation} \label{s order1}
\begin{split}
&\partial_t \left((\lds\tub)^T\ta^{0,\flat}(\tu)\lds\tub\right) +
\sum_{k=1}^{d} \left((\lds\tub)^T\ta^{0,\flat}(\tu)\ta^{k,\flat}(\tu)\lds\tub\right) _{x_k}
\\&\quad= (\lds\tub)^T\left(\partial_t\left(\ta^{0,\flat}(\tu)\right)+\sum_{k=1}^{d} \left( \ta^{0,\flat}(\tu)\ta^{k,\flat}(\tu) \right) _{x_k} \right)\lds\tub
\\&\qquad-2 \re \left( \sum_{k=1}^{d} (\lds\tub)^T\ta^{0,\flat}(\tu) \left[\lds,\ta^{k,\flat}(\tu)\right] \tub_{x_k} \right) + 2 \re \left((\lds\tub)^T\ta^{0,\flat}(\tu)\lds\tq^\flat(\tu) \right).
\end{split}
\end{equation}
Here $\re f=(f + f^T)/2$.

Now one may estimate the integration of the right hand side of \eqref{s order1} over $\rd$. Replacing $\partial_t\tu$ by using the system \eqref{2.12}, sine $\ell>d/2+1$ and by Sobolev's inequality, the first term is easily bounded by
\begin{equation}\label{en11}
 \left(\norm{\dt \tu}_{L^\infty}+\norm{\nabla \tu}_{L^\infty}\right)\norm{\lds \tub}_{L^2}^2\ls\delta\norm{\lds \tub}_{L^2}^2.
\end{equation}
By the commutator estimates \eqref{commutator estimate} of Lemma \ref{A2}, using Cauchy's inequality, one may bound the second term by
\begin{equation}
\begin{split}
& \norm{\lds \tub}_{L^2}\sum_{k=1}^{d}\norm{\left[\lds,\ta^{k,\flat}(\tu)\right] \tub_{x_k}}_{L^2}.
\\&\quad\ls \norm{\lds \tub}_{L^2}\sum_{k=1}^{d}\left(\norm{\nabla(\ta^{k,\flat}(\tu))  }_{L^\infty}\norm{\lds \tub  }_{L^2}+\norm{\lds(\ta^{k,\flat}(\tu))  }_{L^2}\norm{\na \tub  }_{L^\infty} \right)
\\&\quad\ls \delta\left(\norm{\lds \tub}_{L^2}^2+\norm{\na \tub  }_{L^\infty}^2\right).
\end{split}
\end{equation}
For the last term, one can rewrite
\begin{equation}
\begin{split}
&  (\lds\tub)^T\ta^{0,\flat}(\tu)\lds\tq^\flat(\tu)
 \\&\quad= (\lds\tub)^T\ta^{0,\flat}(\tu)\Theta^\flat\lds\tub+  (\lds\tub)^T\ta^{0,\flat}(\tu)\lds\left(\tq^\flat(\tu)-\Theta^\flat\tub\right)
 \\&\quad= (\lds\tub)^T\ta^{0,\flat}(0)\Theta^\flat\lds\tub+ (\lds\tub)^T \left(\ta^{0,\flat}(\tu)-\ta^{0,\flat}(0)\right)\Theta^\flat\lds\tub
 \\&\qquad+  (\lds\tub)^T\ta^{0,\flat}(\tu)\lds\left(\tq^\flat(\tu)-\Theta^\flat\tub\right).
 \end{split}
\end{equation}
By \eqref{3.10}--\eqref{mm} from Lemma \ref{diss}, it holds that
\begin{equation}
  \int 2\re \left( (\lds\tub)^T\ta^{0,\flat}(0)\Theta^\flat\lds\tub \right)=-\int  (\lds\tud)^T \tmmd \lds\tud \le - \tilde c_m\norm{\lds\tud}_{L^2}^2.
\end{equation}
On the other hand, it is easy to bound
\begin{equation}\label{en15}
\int 2 \re \left( (\lds\tub)^T \left(\ta^{0,\flat}(\tu)-\ta^{0,\flat}(0)\right)\Theta^\flat\lds\tub \right)
\ls \norm{\tu}_{L^\infty} \norm{\lds\tub}_{L^2}^2\ls \delta\norm{\lds\tub}_{L^2}^2.
\end{equation}
Hence, integrating \eqref{s order1} over $\rd$, it follows from \eqref{en11}--\eqref{en15} that
\begin{equation}\label{en16}
\begin{split}
 &\ddt\int (\lds\tub)^T\ta^{0,\flat}(\tu)\lds\tub +  \norm{\lds\tud}_{L^2}^2
\\&\quad \ls  \delta \left(\norm{\lds \tub}_{L^2}^2 +\norm{  \na \tub  }_{L^\infty}^2\right)+\norm{\lds\tub}_{L^2}\norm{\lds\left(\tq^\flat(\tu)-\Theta^\flat\tub\right)}_{L^2}.
 \end{split}
\end{equation}

Now one may prove $(i)$ under \asmp{\twd 1}. By the expansions \eqref{for decay1}--\eqref{for decay2} and using the product estimates \eqref{product estimate} of Lemma \ref{A2}, one may estimate
\begin{equation}\label{en41}
\begin{split}
&
\norm{\lds\left(\tq^\flat(\tu)-\Theta^\flat\tub\right)}_{L^2}   \ls  \norm{\lds\left(\ty(\tu)\tub \tu \right)}_{L^2}
\\&\quad \ls \norm{ \ty(\tu)\tu  }_{L^\infty}\norm{\lds\tub}_{L^2}+\norm{\lds\left(\ty(\tu)\tu \right)}_{L^2}\norm{\tub}_{L^\infty}
\\&\quad\ls \delta\left(\norm{\lds \tub}_{L^2} +\norm{  \tub  }_{L^\infty}  \right).
\end{split}
\end{equation}
Plugging the estimates \eqref{en41} into \eqref{en16} yields \eqref{est1}.

One then turns to prove $(ii)$ under \asmp{\td 1}.
By the expansions \eqref{eexx4}--\eqref{eexx44} and using the product estimates \eqref{product estimate} of Lemma \ref{A2}, one may estimate
\begin{equation}\label{en51}
\begin{split}
&
\norm{\lds\left(\tq^\flat(\tu)-\Theta^\flat\tub\right)}_{L^2}   \ls  \norm{\lds\left(\ty'(\tu)\tud \tu \right)}_{L^2}+\norm{\lds\left(\ty'(\tu)\tub \tub \right)}_{L^2}
\\&\quad \ls \norm{ \ty'(\tu)\tu  }_{L^\infty}\norm{\lds\tud}_{L^2}+\norm{\lds\left(\ty'(\tu)\tu \right)}_{L^2}\norm{\tud}_{L^\infty}
\\&\qquad+\norm{ \ty'(\tu)\tub  }_{L^\infty}\norm{\lds\tub}_{L^2}+\norm{\lds\left(\ty'(\tu)\tub \right)}_{L^2}\norm{\tub}_{L^\infty}
\\&\quad\ls \delta\left(\norm{\lds \tub}_{L^2} +\norm{  \tud  }_{L^\infty} \right) +\norm{ \tub  }_{L^\infty}^2 .
\end{split}
\end{equation}
Plugging the estimates \eqref{en51} into \eqref{en16} yields \eqref{est11}.
\end{proof}

\subsection{Recovering the full dissipation}

Noting that Lemmas \ref{enes1}--\ref{enes2} only contain the dissipation estimates for the component $\tud$,
one may then use the partial Kawashima condition of the subsystem \eqref{2.47_Sys_2n} to recover the dissipation estimates for the component $\tuc$ (and hence $\tub$).
\begin{lem}\label{enes3}

For $0\le s\le \ell-1$, the following estimates hold.

$(i)$ Under   \asmp{\twd 1} and that $d\ge 3$,
\begin{equation}\label{est22}
\begin{split}
&\ddt\int-i \left(\abs{\xi}^{ s }\widehat{\tub}\right)^T K \left(\abs{\xi}^{ s+1 }\widehat{\tub}\right) \rmd\xi  +  \norm{\Lambda^{s+1} \tub}_{L^2}^2
\\&\quad\ls \norm{\Lambda^{s } \tud}_{L^2}^2 +\norm{\Lambda^{s+1} \tud}_{L^2}^2 +\delta \left( \norm{  \tub  }_{L^\infty}^2+\norm{\na \tub  }_{L^\infty}^2\right).
\end{split}
\end{equation}

$(ii)$ Under   \asmp{\td 1},
\begin{equation}\label{est222}
\begin{split}
&\ddt\int-i \left(\abs{\xi}^{ s }\widehat{\tub}\right)^T K \left(\abs{\xi}^{ s+1 }\widehat{\tub}\right) \rmd\xi  +  \norm{\Lambda^{s+1} \tub}_{L^2}^2
\\&\quad\ls \norm{\Lambda^{s } \tud}_{L^2}^2 +\norm{\Lambda^{s+1} \tud}_{L^2}^2 +\delta \left( \norm{  \tud  }_{L^\infty}^2+\norm{\na \tub  }_{L^\infty}^2 + \norm{ \tub  }_{L^\infty}^4 \right) \\&\qquad +\norm{\lds\tub}_{L^2}^2\norm{ \tub  }_{L^\infty}^2.
\end{split}
\end{equation}
Here $K = K(\xi / |\xi|)$ is defined in Lemma \ref{kaw}.
\end{lem}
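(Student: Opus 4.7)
The plan is to produce a Kawashima-type interaction functional in Fourier space whose time derivative yields the dissipation $\norm{\Lambda^{s+1}\tub}_{L^2}^2$ modulo terms in $\tud$ that are either already dissipated in Lemma \ref{enes2} or absorbed by smallness. First I would freeze the coefficients of \eqref{2.47_Sys_2n} by writing
\begin{equation*}
\partial_t\tub+\sum_{k=1}^d \ta^{k,\flat}(0)\tub_{x_k}=\tq^\flat(\tu)-\sum_{k=1}^d\bigl(\ta^{k,\flat}(\tu)-\ta^{k,\flat}(0)\bigr)\tub_{x_k},
\end{equation*}
apply $\lds$, and take the Fourier transform, so that on the left the principal symbol is $i|\xi|\ta^\flat(0,\omega)$ with $\omega=\xi/|\xi|$. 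Since $K(\omega)$ is skew with $K(-\omega)=-K(\omega)$ and $\tub$ is real, the integrand in $E_s(t)$ is conjugate-symmetric in $\xi$, so $E_s$ is real and $|E_s(t)|\ls\norm{\lds\tub}_{L^2}\norm{\Lambda^{s+1}\tub}_{L^2}$; thus $E_s$ will be only a small perturbation when added to the energies of Lemmas \ref{enes1}--\ref{enes2}.

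Differentiating $E_s$ in time and plugging in the evolution equation, the purely quadratic symbol contribution is
\begin{equation*}
\int|\xi|^{2s+2}\,\overline{\widehat{\tub}}^{\,T}\bigl(K(\omega)\ta^\flat(0,\omega)-(\ta^\flat(0,\omega))^TK(\omega)\bigr)\widehat{\tub}\,\rmd\xi,
\end{equation*}
which by Lemma \ref{kaw} bounds $2c_k\norm{\Lambda^{s+1}\tub}_{L^2}^2$ from below up to a loss of order $\norm{\Lambda^{s+1}\tud}_{L^2}^2$ coming from the block $\mathrm{diag}(0,I_{r-1})$ supported on the damped coordinates. The loss is harmless because that norm is already dissipated through Lemma \ref{enes2}, and it is the source of the $\norm{\Lambda^{s+1}\tud}_{L^2}^2$ on the right of \eqref{est22}--\eqref{est222}. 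Next, the constant-coefficient linear part $\Theta^\flat\tub=(0,\thd\tud)^T$ of $\tq^\flat$ produces an integral which, after placing the weight $|\xi|^s$ on $\widehat{\tq^\flat}$ and $|\xi|^{s+1}$ on $\widehat{\tub}$, is bounded by $\norm{\lds\tud}_{L^2}\norm{\Lambda^{s+1}\tub}_{L^2}$ and absorbed by Young's inequality; this is the reason for the asymmetric form of $E_s$ and is the source of the $\norm{\lds\tud}_{L^2}^2$ on the right.

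The variable-coefficient remainder $(\ta^{k,\flat}(\tu)-\ta^{k,\flat}(0))\tub_{x_k}$ is handled exactly as in the proof of Lemma \ref{enes2}, via \eqref{commutator estimate}--\eqref{product estimate}, contributing $\delta(\norm{\Lambda^{s+1}\tub}_{L^2}+\norm{\na\tub}_{L^\infty})\norm{\lds\tub}_{L^2}$, which is absorbed by the dissipation together with smallness. The nonlinear part of $\tq^\flat$ bifurcates according to the hypothesis: under \asmp{\twd 1} every remainder is of the form $\ty(\tu)\tu\tub$ by \eqref{for decay1}--\eqref{for decay2}, and the product estimate yields the $\delta(\norm{\tub}_{L^\infty}^2+\norm{\na\tub}_{L^\infty}^2)$ contribution of \eqref{est22}; under \asmp{\td 1} the stronger expansions \eqref{eexx4}--\eqref{eexx44} upgrade every remainder to $\ty'(\tu)\tu\tud$ or $\ty'(\tu)\tub\tub$, giving the $\delta\norm{\tud}_{L^\infty}^2$, $\delta\norm{\tub}_{L^\infty}^4$, and $\norm{\lds\tub}_{L^2}^2\norm{\tub}_{L^\infty}^2$ contributions of \eqref{est222}.

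The main technical obstacle is that $E_s$ is mixed-order and does not by itself control $\norm{\lds\tub}_{L^2}^2$: each derivative on $\widehat{\tub}$ in the source and commutator integrals must be routed so that no $\norm{\Lambda^{s+1}\tuc}_{L^2}$ appears on the right-hand side, while the tolerable $\tud$-norms never exceed order $s+1$. This routing is exactly what the asymmetric factors $|\xi|^s$ and $|\xi|^{s+1}$ in the definition of $E_s$ are designed to enforce, and it is the key to keeping the $\tuc$ dissipation on the left while the right-hand side reduces to quantities dissipated in Lemma \ref{enes2} or summable by smallness.
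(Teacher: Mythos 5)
Your proposal follows essentially the same route as the paper's proof: the identical skew-symmetric Fourier functional, the Kawashima inequality of Lemma \ref{kaw} to extract $\norm{\Lambda^{s+1}\tub}_{L^2}^2$ at the cost of $\norm{\Lambda^{s+1}\tud}_{L^2}^2$, the asymmetric $|\xi|^{s}$/$|\xi|^{s+1}$ split so that $\Lambda^{s}h^\flat$ is paired against $\Lambda^{s+1}\tub$, and the same bifurcation of the source expansion under \asmp{\twd 1} (terms $\ty(\tu)\tu\tub$, estimated via $L^d$--$L^{2d/(d-2)}$, whence $d\ge3$) versus \asmp{\td 1}. One slip in your middle paragraph: the variable-coefficient remainder must be paired with $\norm{\Lambda^{s+1}\tub}_{L^2}$, not $\norm{\lds\tub}_{L^2}$ (for $s=0$ the latter would leave an unabsorbable $\delta\norm{\tub}_{L^2}^2$), but your closing paragraph states the correct routing, so this is a write-up inconsistency rather than a gap.
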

\begin{proof}
The subsystem \eqref{2.47_Sys_2n} can be rewritten in the following form
\begin{equation} \label{perturbed form}
\partial_t \tub  + \sum_{k=1}^{d} \ta^{k,\flat}(0)   \tub_{x_k}
= -\sum_{k=1}^{d} \left(\ta^{k,\flat}(\tu)- \ta^{k,\flat}(0)  \right) \tub_{x_k}+ \tq^\flat(\tu):=h^\flat .
\end{equation}
Taking the Fourier transform yields
\begin{equation} \label{perturbed form1}
\partial_t \widehat{\tub}  + i\sum_{k=1}^{d} \xi_k\ta^{k,\flat}(0)\widehat{\tub} = \widehat{h^\flat}.
\end{equation}
Let $K=K(\xi/|\xi|)$ be the skew-symmetric real matrix from Lemma \ref{kaw}, then
\begin{equation} \label{kk1}
K\sum_{k=1}^{d}  \xi_k\ta^{k,\flat}(0)-\sum_{k=1}^{d}  \xi_k\left(\ta^{k,\flat}(0)\right)^TK \ge 2\abs{\xi}\left(c_k I_{n-1}- {\mathrm{diag}}(0,I_{n-r})\right).
\end{equation}
Applying $-i(\widehat{\tub})^T K$ to \eqref{perturbed form1} yields
\begin{equation} \label{perturbed form2}
-i(\widehat{\tub})^T K\partial_t \widehat{\tub}  + (\widehat{\tub})^T K\sum_{k=1}^{d}\xi_k\ta^{k,\flat}(0)\widehat{\tub} =-i(\widehat{\tub})^T K \widehat{h^\flat}.
\end{equation}
It then follows from \eqref{kk1} that
\begin{equation} \label{perturbed form3}
-i\partial_t\left((\widehat{\tub})^T K \widehat{\tub}\right)  + 2 c_k\abs{\xi}\abs{\widehat{\tub}}^2 \le 2\abs{\xi}\abs{\widehat{\tud}}^2 + 2 \abs{(\widehat{\tub})^T K \widehat{h^\flat}}.
\end{equation}

For $0\le s\le \ell-1$, multiplying \eqref{perturbed form3} by $\abs{\xi}^{2s+1}$ and then integrating over the frequency space $\rd_\xi$ yields, by the Plancherel theorem,
\begin{equation} \label{perturbed form4}
\begin{split}
\ddt\int-i \abs{\xi}^{2s+1} (\widehat{\tub})^T K \widehat{\tub} \rmd\xi  + \norm{\Lambda^{s+1}  \tub}_{L^2}^2 \ls \norm{\Lambda^{s+1}  \tud}_{L^2}^2 +\norm{\Lambda^{s+1} \tub}_{L^2}\norm{\Lambda^{s} h^\flat}_{L^2}.
\end{split}
\end{equation}
To estimate $\norm{\Lambda^{s } h^\flat}_{L^2}$, one may use the product estimates \eqref{product estimate} of Lemma \ref{A2} to get
\begin{equation} \label{perturbed form8}
\begin{split}
&\norm{\Lambda^{s } \left(\left(\ta^{k,\flat}(\tu)- \ta^{k,\flat}(0)  \right) \tub_{x_k}\right)}_{L^2}
  \\&\quad \ls    \norm{\ta^{k,\flat}(\tu)- \ta^{k,\flat}(0)  }_{L^\infty}\norm{\Lambda^{s }\na \tub}_{L^{2 }}+\norm{\Lambda^{s }\left(\ta^{k,\flat}(\tu)- \ta^{k,\flat}(0)  \right)}_{L^2}\norm{\na\tub}_{L^\infty}
\\&\quad \ls \delta\left(\norm{\Lambda^{s+1} \tub}_{L^2} +\norm{ \na \tub  }_{L^\infty}\right).
\end{split}
\end{equation}
On the other hand
\begin{equation} \label{perturbed form88}
\begin{split}
\norm{\Lambda^{s } \left(\tq^\flat(\tu)\right)}_{L^2}\le \norm{ \Theta^\mathtt{D} \Lambda^{s }\tud }_{L^2}+\norm{\lds\left(\tq^\flat(\tu)-\Theta^\flat\tub\right)}_{L^2},
\end{split}
\end{equation}
Hence, by \eqref{perturbed form8}--\eqref{perturbed form88}, it follows from \eqref{perturbed form4} that
\begin{equation} \label{en23}
\begin{split}
&\ddt\int-i \left(\abs{\xi}^{ s }\widehat{\tub}\right)^T K \left(\abs{\xi}^{ s+1 }\widehat{\tub}\right) d\xi  +  \norm{\Lambda^{s+1} \tub}_{L^2}^2
\\&\quad\ls \norm{\Lambda^{s } \tud}_{L^2}^2 +\norm{\Lambda^{s+1} \tud}_{L^2}^2 +\delta \norm{\na \tub  }_{L^\infty}^2+ \norm{\Lambda^{s+1}\tub}_{L^2}\norm{\lds\left(\tq^\flat(\tu)-\Theta^\flat\tub\right)}_{L^2} .
\end{split}
\end{equation}

Now one may prove $(i)$ under \asmp{\twd 1} and that $d\ge3$. By the expansions \eqref{for decay1}--\eqref{for decay2} and the product estimates \eqref{product estimate} of Lemma \ref{A2}, it holds that
\begin{equation}\label{en42}
\begin{split}
&
\norm{\lds\left(\tq^\flat(\tu)-\Theta^\flat\tub\right)}_{L^2}   \ls  \norm{\lds\left(\ty(\tu)\tub \tu \right)}_{L^2}
\\&\quad \ls \norm{ \ty(\tu)\tu  }_{L^d}\norm{\lds\tub}_{L^ \frac{2d}{d-2}}+\norm{\lds\left(\ty(\tu)\tu \right)}_{L^2}\norm{\tub}_{L^\infty}
\\&\quad\ls \delta\left(\norm{\Lambda^{s+1}\tub}_{L^2} +\norm{  \tub  }_{L^\infty}  \right).
\end{split}
\end{equation}
Plugging the estimates \eqref{en42} into \eqref{en23}, one deduces \eqref{est22}.

One then turns to prove $(ii)$ under \asmp{\td 1}. By the expansions \eqref{eexx4}--\eqref{eexx44} and the product estimates \eqref{product estimate} of Lemma \ref{A2}, it holds that
\begin{equation}\label{en52}
\begin{split}
&
\norm{\lds\left(\tq^\flat(\tu)-\Theta^\flat\tub\right)}_{L^2}   \ls  \norm{\lds\left(\ty'(\tu)\tud \tu \right)}_{L^2}+\norm{\lds\left(\ty'(\tu)\tub \tub \right)}_{L^2}
\\&\quad \ls \norm{ \ty'(\tu)\tu  }_{L^\infty}\norm{\lds\tud}_{L^2}+\norm{\lds\left(\ty'(\tu)\tu \right)}_{L^2}\norm{\tud}_{L^\infty}
+\norm{ \ty'(\tu)\tub  }_{L^\infty}\norm{\lds\tub}_{L^2}
\\&\qquad + \norm{\lds\left( \left( \ty'(\tu) - \ty'(0) \right) \tub \right)}_{L^2}\norm{\tub}_{L^\infty}
+ \norm{\lds\left( \ty'(0) \tub \right)}_{L^2}\norm{\tub}_{L^\infty}
\\&\quad\ls \delta\left(\norm{\lds \tud}_{L^2} +\norm{  \tud  }_{L^\infty} \right) +\delta \norm{ \tub  }_{L^\infty}^2+\norm{\lds\tub}_{L^2}\norm{ \tub  }_{L^\infty}  .
\end{split}
\end{equation}
Plugging the estimates \eqref{en52} into \eqref{en23}, one deduces \eqref{est222}.
\end{proof}

\subsection{Synthesis}
Now one can combine Lemmas \ref{enes1}--\ref{enes3} to arrive the following propositions.

\begin{prop}\label{enes5}
For $1\le s\le \ell-1$, there exists an energy functional $\mathcal{E}_s(\tub)$ equivalent to $\norm{\Lambda^{s} \tub}_{H^{\ell-s}}^2$ such that the following estimates hold.

$(i)$ Under   \asmp{\twd 1} and that $d\ge 3$,
\begin{equation}\label{energy es for decay}
\begin{split}
 &\ddt\mathcal{E}_s(\tub)+  \norm{\Lambda^{s } \tud}_{H^{\ell-s }}^2 +  \norm{\Lambda^{s+1} \tuc}_{H^{\ell-s-1}}^2
\\&\quad \ls \delta \left(\pnorm{\lds \tuc }{2}^2+ \norm{  \tub  }_{L^\infty}^2+\norm{\na \tub  }_{L^\infty}^2  \right).
 \end{split}
\end{equation}

$(ii)$ Under   \asmp{\td 1},
\begin{equation}\label{energy es for decay1}
\begin{split}
 &\ddt\mathcal{E}_s(\tub)+  \norm{\Lambda^{s } \tud}_{H^{\ell-s }}^2 +  \norm{\Lambda^{s+1} \tuc}_{H^{\ell-s-1}}^2
\\&\quad \ls \delta \left(\pnorm{\lds \tuc }{2}^2+ \norm{  \tud  }_{L^\infty}^2+\norm{\na \tub  }_{L^\infty}^2 +\norm{ \tub  }_{L^\infty}^4 \right)+\norm{\lds \tub}_{L^2}\norm{ \tub  }_{L^\infty}^2.
 \end{split}
\end{equation}
\end{prop}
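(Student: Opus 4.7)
The plan is to build $\mathcal{E}_s(\tub)$ as a weighted linear combination of the four quadratic functionals produced by Lemmas \ref{enes2} and \ref{enes3}, evaluated at the two extremal Sobolev levels $s$ and $\ell$ (respectively $s$ and $\ell-1$ for the Kawashima functional), which suffice by the standard Bessel-potential equivalence $\norm{\Lambda^s f}_{H^{\ell-s}}^{2}\sim\norm{\Lambda^s f}_{L^{2}}^{2}+\norm{\Lambda^{\ell} f}_{L^{2}}^{2}$. Concretely, I would set
\begin{equation*}
\mathcal{E}_s(\tub) := M\!\!\sum_{s'\in\{s,\ell\}} \int (\Lambda^{s'}\tub)^{T} \widetilde{A}^{0,\flat}(\tu) \Lambda^{s'}\tub\,\rmd x \;-\; \sum_{s'\in\{s,\ell-1\}} i\int |\xi|^{2s'+1} (\widehat{\tub})^{T} K(\xi/|\xi|)\widehat{\tub}\,\rmd\xi,
\end{equation*}
with a large constant $M$ to be fixed. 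By Lemma \ref{a0b}, $\widetilde{A}^{0,\flat}(\tu)$ is uniformly positive definite under \eqref{apriori}, so the first sum is equivalent to $\norm{\Lambda^{s}\tub}_{L^{2}}^{2}+\norm{\Lambda^{\ell}\tub}_{L^{2}}^{2}\sim\norm{\Lambda^{s}\tub}_{H^{\ell-s}}^{2}$; skew-symmetry and uniform boundedness of $K$ make each Kawashima cross-term bounded by $\norm{\Lambda^{s'}\tub}_{L^{2}}\norm{\Lambda^{s'+1}\tub}_{L^{2}}$, which is absorbed into the first sum by Young's inequality once $M$ is sufficiently large. This gives the claimed norm equivalence.

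Differentiating in time, I would add up \eqref{est1}/\eqref{est11} at levels $s$ and $\ell$ (weighted by $M$) with \eqref{est22}/\eqref{est222} at levels $s$ and $\ell-1$. The left-hand dissipations assemble into
\begin{equation*}
M\bigl(\norm{\Lambda^{s}\tud}_{L^{2}}^{2}+\norm{\Lambda^{\ell}\tud}_{L^{2}}^{2}\bigr)+\norm{\Lambda^{s+1}\tub}_{L^{2}}^{2}+\norm{\Lambda^{\ell}\tub}_{L^{2}}^{2},
\end{equation*}
which, after splitting $\tub=(\tuc,\tud)$ and using the same Bessel equivalence, is comparable to $M\norm{\Lambda^{s}\tud}_{H^{\ell-s}}^{2}+\norm{\Lambda^{s+1}\tuc}_{H^{\ell-s-1}}^{2}$. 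The intermediate $\tud$-dissipation remainders appearing on the right of Lemma \ref{enes3}, namely $\norm{\Lambda^{s}\tud}_{L^{2}}^{2}$, $\norm{\Lambda^{s+1}\tud}_{L^{2}}^{2}$, $\norm{\Lambda^{\ell-1}\tud}_{L^{2}}^{2}$, $\norm{\Lambda^{\ell}\tud}_{L^{2}}^{2}$, all fit inside $\norm{\Lambda^{s}\tud}_{H^{\ell-s}}^{2}$ by Fourier interpolation and are absorbed into the $M$-weighted dissipation. The remaining $\delta$-small term $\delta\norm{\Lambda^{s}\tub}_{L^{2}}^{2}$ from Lemma \ref{enes2} splits as $\delta\norm{\Lambda^{s}\tuc}_{L^{2}}^{2}+\delta\norm{\Lambda^{s}\tud}_{L^{2}}^{2}$; the second half is absorbed, leaving precisely the $\delta\norm{\Lambda^{s}\tuc}_{L^{2}}^{2}$ term in the proposition. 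The $L^{\infty}$ remainders propagate verbatim from Lemmas \ref{enes2}--\ref{enes3} and yield the stated right-hand sides. In case $(ii)$, the extra nonlinear contributions $\norm{\Lambda^{\ell}\tub}_{L^{2}}\norm{\tub}_{L^{\infty}}^{2}$ from Lemma \ref{enes2}(ii) at level $\ell$ and $\norm{\Lambda^{\ell-1}\tub}_{L^{2}}^{2}\norm{\tub}_{L^{\infty}}^{2}$ from Lemma \ref{enes3}(ii) at level $\ell-1$ are handled by Young's inequality combined with Fourier interpolation $\norm{\Lambda^{\ell-1}\tub}_{L^{2}}^{2}\lesssim\norm{\Lambda^{s}\tub}_{L^{2}}^{2}+\norm{\Lambda^{\ell}\tub}_{L^{2}}^{2}$ and the a priori bound \eqref{apriori}; the outcome is $\varepsilon\norm{\Lambda^{\ell}\tub}_{L^{2}}^{2}$ (absorbed) plus $\norm{\tub}_{L^{\infty}}^{4}$ and $\delta\norm{\Lambda^{s}\tub}_{L^{2}}\norm{\tub}_{L^{\infty}}^{2}$, matching the right-hand side of \eqref{energy es for decay1}.

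The main obstacle is the simultaneous choice of the weight $M$: it must be large enough to absorb, in one stroke, both the Kawashima cross-terms hidden inside the definition of $\mathcal{E}_s$ \emph{and} the full range of intermediate $\tud$-dissipation-type remainders generated on the right-hand sides of Lemma \ref{enes3} at the two levels, while still leaving a strictly positive share of the $\tud$-dissipation on the left to reconstitute $\norm{\Lambda^{s}\tud}_{H^{\ell-s}}^{2}$. Once $M$ is fixed independently of $\delta$, the smallness from \eqref{apriori} renders every remaining quadratic $\delta$-term harmless, the surviving cubic/quartic nonlinearities in case $(ii)$ are precisely those listed on the right of \eqref{energy es for decay1}, and the proof reduces to routine bookkeeping.
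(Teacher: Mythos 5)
Your proposal is correct and is essentially the paper's own argument: the paper defines the same functional with a small weight $\epsilon$ on the Kawashima cross-terms rather than a large weight $M$ on the symmetrizer part, which is the same device up to an overall constant, and the combination of Lemmas \ref{enes2}--\ref{enes3} at the levels $\{s,\ell\}$ and $\{s,\ell-1\}$ together with the absorption of the intermediate $\tud$-terms is identical. The only cosmetic discrepancy is the prefactor on the quartic term in case $(ii)$ (you honestly obtain $C\norm{\tub}_{L^\infty}^4$ rather than $\delta\norm{\tub}_{L^\infty}^4$), which matches what the paper's terse ``similarly'' actually yields and is harmless in the subsequent decay analysis.
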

\begin{proof}
One may first prove $(i)$ under \asmp{\twd 1} and that $d\ge 3$ by using the $(i)$ assertions of Lemmas \ref{enes2}--\ref{enes3}.
Taking first $1\le s\le \ell-1$ and then $s=\ell$ in the estimates \eqref{est1} of Lemma \ref{enes2}, noting that $\norm{f}_{H^k}$ is equivalent to the sum $\norm{f}_{L^2}+\norm{\Lambda^k f}_{L^2}$, one has
\begin{equation}\label{conclusion1}
\begin{split}
 &\ddt\int\left( (\lds\tub)^T\ta^{0,\flat}(\tu)\lds\tub+(\Lambda^\ell\tub)^T\ta^{0,\flat}(\tu)\Lambda^\ell\tub  \right)+  \norm{\lds\tud}_{H^{\ell-s}}^2
 \\&\quad \ls  \delta \left(\norm{\lds \tub}_{H^{\ell-s}}^2+\norm{  \tub  }_{L^\infty}^2+\norm{\na \tub  }_{L^\infty}^2\right).
 \end{split}
\end{equation}
Next, taking first $1\le s\le \ell-1$ and then $s=\ell-1$ in the estimates \eqref{est22} of Lemma \ref{enes3}, one obtains
\begin{equation} \label{conclusion2}
\begin{split}
&\ddt\int-i \left( \left(\abs{\xi}^{ s }\widehat{\tub}\right)^T K \left(\abs{\xi}^{ s+1 }\widehat{\tub}\right)
+\left(\abs{\xi}^{\ell-1 }\widehat{\tub}\right)^T K \left(\abs{\xi}^{ \ell}\widehat{\tub}\right)\right)d\xi  +  \norm{\Lambda^{s+1} \tub}_{H^{\ell-s-1}}^2
\\&\quad\ls \norm{\Lambda^{s} \tud}_{H^{\ell-s}}^2   +\delta \left( \norm{  \tub  }_{L^\infty}^2+\norm{\na \tub  }_{L^\infty}^2\right).
\end{split}
\end{equation}
For sufficiently small constant $\epsilon>0$, if one defines the instant energy functional
\begin{equation}
\begin{split}
 \mathcal{E}_s(\tub)=&\int\left( (\lds\tub)^T\ta^{0,\flat}(\tu)\lds\tub+(\Lambda^\ell\tub)^T\ta^{0,\flat}(\tu)\Lambda^\ell\tub  \right)
 \\&+\epsilon\int-i \left( \left(\abs{\xi}^{ s }\widehat{\tub}\right)^T K \left(\abs{\xi}^{ s+1 }\widehat{\tub}\right)
+\left(\abs{\xi}^{\ell-1 }\widehat{\tub}\right)^T K \left(\abs{\xi}^{ \ell}\widehat{\tub}\right)\right) \rmd\xi
\end{split}
\end{equation}
for $1\le s\le \ell-1$, since $\ta^{0,\flat}(\tu)$ is positive definite near $\tu=0$ from Lemma \ref{a0b}, then it follows that $\mathcal{E}_s(\tub)$ is equivalent to $\norm{\Lambda^{s} \tub}_{H^{\ell-s}}^2$. Moreover, since both $\epsilon$ and $\delta$ are small, one can deduce \eqref{energy es for decay} from \eqref{conclusion1}--\eqref{conclusion2}.

Similarly, to prove $(ii)$ under \asmp{\td 1}, one needs only to use instead the $(ii)$ assertions of Lemmas \ref{enes2}--\ref{enes3} in the arguments above to deduce \eqref{energy es for decay1}.
\end{proof}

\begin{prop}\label{enes6}
The following estimates hold.

$(i)$ Under   \asmp{\twd 1} and that $d\ge 3$,
\begin{equation}\label{energy es for global}
\begin{split}
 &
 \norm{\tu(t)}_{L^2}^2+\norm{\Lambda  \tub(t)}_{H^{\ell-1}}^2+ \int_0^t \left( \norm{\tud(\tau)}_{H^\ell}^2+\norm{\Lambda  \tuc(\tau)}_{H^{\ell-1}}^2 \right) \rmd\tau
 \\&\quad \ls \norm{\tu_0}_{L^2}^2+\norm{\Lambda  \tub_0}_{H^{\ell-1}}^2.
 \end{split}
\end{equation}

$(ii)$ Under   \asmp{\td 1},
\begin{equation}\label{energy es for global+}
\begin{split}
 &
 \norm{\tu(t)}_{L^2}^2+\norm{\Lambda  \tub(t)}_{H^{\ell-1}}^2+ \int_0^t \left( \norm{\tud(\tau)}_{H^\ell}^2+\norm{\Lambda  \tuc(\tau)}_{H^{\ell-1}}^2 \right) \rmd\tau
 \\&\quad \ls \norm{\tu_0}_{L^2}^2+\norm{\Lambda  \tub_0}_{H^{\ell-1}}^2 + \int_0^t \left( \norm{\tub(\tau)}_{L^2}^2 \norm{\tub(\tau)}_{L^\infty}^2 + \norm{\tub(\tau)}_{L^\infty}^4 \right) \rmd \tau.
 \end{split}
\end{equation}
 \end{prop}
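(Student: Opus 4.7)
The plan is to synthesize three differential inequalities---Lemma \ref{enes1} (entropy dissipation of $\tu$), Proposition \ref{enes5} with $s=1$ (high-order dissipation of $\tub$), and Lemma \ref{enes3} with $s=0$ (low-frequency $\tuc$-dissipation via the partial Kawashima condition)---into a single Lyapunov functional and then close the inequality by absorbing the admissible right-hand-side terms into the dissipation using Sobolev embedding; the final estimate is obtained by integrating in time.

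Concretely I would introduce
\begin{equation*}
\mathcal{E}(t) := \int \teta(\tu)\,\rmd x + \mathcal{E}_1(\tub) + \epsilon \int -i\, \widehat{\tub}^{\,T} K\bigl(\xi/|\xi|\bigr)\,|\xi|\, \widehat{\tub}\,\rmd\xi
\end{equation*}
for a small parameter $\epsilon>0$. By the strict convexity of $\teta$ near zero (see \eqref{2.55}) together with $\teta(0)=0=\ntu\teta(0)$, and by the equivalence $\mathcal{E}_1(\tub)\sim\norm{\Lambda\tub}_{H^{\ell-1}}^2$ from Proposition \ref{enes5}, the functional $\mathcal{E}(t)$ is equivalent to $\norm{\tu}_{L^2}^2+\norm{\Lambda\tub}_{H^{\ell-1}}^2$ under the a priori assumption \eqref{apriori}; in particular $\mathcal{E}(0)\ls \norm{\tu_0}_{L^2}^2+\norm{\Lambda\tub_0}_{H^{\ell-1}}^2$. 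Summing the three ingredients with this $\epsilon$-weighting yields, under \asmp{\twd 1},
\begin{equation*}
\ddt\mathcal{E}(t) + \norm{\tud}_{H^\ell}^2 + \norm{\Lambda\tuc}_{H^{\ell-1}}^2 \ls \delta\bigl(\norm{\tub}_{L^\infty}^2 + \norm{\na\tub}_{L^\infty}^2 + \norm{\Lambda\tuc}_{L^2}^2\bigr).
\end{equation*}
Since $\ell>d/2+1$, direct Sobolev embedding gives $\norm{\tud}_{L^\infty}^2+\norm{\na\tud}_{L^\infty}^2\ls\norm{\tud}_{H^\ell}^2$. For the $\tuc$-component the dissipation only controls $\Lambda$-and-higher derivatives, so I would apply Gagliardo--Nirenberg interpolation between $\Lambda\tuc\in L^2$ and $\Lambda^\ell\tuc\in L^2$ to obtain $\norm{\tuc}_{L^\infty}^2+\norm{\na\tuc}_{L^\infty}^2\ls\norm{\Lambda\tuc}_{H^{\ell-1}}^2$. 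This is exactly where $d\ge 3$ is needed, so that $d/2$ falls strictly inside the effective interpolation interval $[1,\ell]$. With $\delta$ small, every right-hand-side term is absorbed, and integration in $t$ yields assertion $(i)$.

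For $(ii)$ the same scheme applies using the $(ii)$-branches of the three ingredients. All terms carrying a $\delta$-prefactor---including $\delta\norm{\tud}_{L^\infty}^2$, $\delta\norm{\na\tub}_{L^\infty}^2$ and the $\delta\norm{\tub}_{L^\infty}^4$ piece from Proposition \ref{enes5}(ii)---are absorbed exactly as above. The only genuinely new contribution is $\norm{\Lambda\tub}_{L^2}\norm{\tub}_{L^\infty}^2$, which Young's inequality splits as $\eta\norm{\Lambda\tub}_{L^2}^2+C_\eta\norm{\tub}_{L^\infty}^4$; the first piece is swallowed by $\norm{\tud}_{H^\ell}^2+\norm{\Lambda\tuc}_{H^{\ell-1}}^2$ and the second merges into the surviving quartic. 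What cannot be absorbed is $\norm{\tub}_{L^2}^2\norm{\tub}_{L^\infty}^2+\norm{\tub}_{L^\infty}^4$: the dissipation controls $\tuc$ only from the $\Lambda$-level upward, so $\norm{\tuc}_{L^2}^2$ is unavailable and smallness cannot be used to close these cubic/quartic pieces. They are retained under the time integral, giving \eqref{energy es for global+}.

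The main obstacle is the bookkeeping of Sobolev indices required to match the inhomogeneous dissipation $\norm{\tud}_{H^\ell}^2+\norm{\Lambda\tuc}_{H^{\ell-1}}^2$: because $\tuc$ is controlled only away from zero frequency, absorbing $\norm{\tuc}_{L^\infty}$ forces the restriction $d\ge 3$ in $(i)$, and the same asymmetry is responsible for the appearance of the two residual nonlinear integrals in $(ii)$, which will later be closed only through the refined decay estimates of Section \ref{en3333333}.
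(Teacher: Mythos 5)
Your proposal is correct and follows essentially the same route as the paper: summing Lemma \ref{enes1}, Proposition \ref{enes5} with $s=1$, and Lemma \ref{enes3} with $s=0$, using the interpolation $\norm{\tub}_{L^\infty}^2+\norm{\na\tub}_{L^\infty}^2\ls\norm{\Lambda\tub}_{H^{\ell-1}}^2$ (where $d\ge3$ enters exactly as you identify) to absorb the $\delta$-weighted terms, and integrating in time, with the residual cubic/quartic terms retained in case $(ii)$. Packaging the three pieces into an explicit Lyapunov functional $\mathcal{E}(t)$ is only a cosmetic difference from the paper's direct summation of the three differential inequalities.
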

\begin{proof}
One may first prove $(i)$ under \asmp{\twd 1} and that $d\ge 3$.
Taking $s=1$ in the estimates \eqref{energy es for decay} of Proposition \ref{enes5}, one obtains
\begin{equation}\label{hih1}
\begin{split}
 &\ddt\mathcal{E}_1(\tub)+  \norm{\Lambda  \tud}_{H^{\ell-1 }}^2 +  \norm{\Lambda^2 \tuc}_{H^{\ell-2}}^2
\\&\quad \ls \delta \left(\pnorm{\Lambda \tub }{2}^2+ \norm{  \tub  }_{L^\infty}^2+\norm{\na \tub  }_{L^\infty}^2  \right).
 \end{split}
\end{equation}
Taking $s=0$ in the estimates \eqref{est22} of Lemma \ref{enes3} yields
\begin{equation}\label{hih2}
\begin{split}
&\ddt\int-i   (  \widehat{\tub} )^T K \left(\abs{\xi} \widehat{\tub} \right) \rmd\xi  +  \norm{\Lambda \tub}_{L^2}^2
\\&\quad\ls \norm{ \tud}_{L^2}^2 +\norm{\Lambda \tud}_{L^2}^2 +\delta \left( \norm{  \tub  }_{L^\infty}^2+\norm{\na \tub  }_{L^\infty}^2\right).
\end{split}
\end{equation}
Since $\ell>d/2+1$ and  $d\ge3$, by the interpolation estimates of Lemma \ref{A1}, it holds that
\begin{equation}
 \norm{  \tub  }_{L^\infty}^2+\norm{\na \tub  }_{L^\infty}^2\ls \norm{\Lambda  \tub }_{H^{\ell-1}}^2
 .
\end{equation}
By this, integrating in time the sum of the estimates \eqref{hih1}--\eqref{hih2} and the entropy estimates \eqref{est0} of Lemma \ref{enes1}, since $\delta$ is small, one can deduce \eqref{energy es for global}.

Similarly, to prove $(ii)$ under \asmp{\td 1}, one needs only to use instead the estimates \eqref{energy es for decay1} of Proposition \ref{enes5}, the estimates \eqref{est222} of Lemma \ref{enes3} and the estimates \eqref{est00} of Lemma \ref{enes1} to deduce \eqref{energy es for global+}.
\end{proof}

%
%
%

\section{Full energy estimates}\label{en2222222}
In view of the energy estimates \eqref{energy es for global}--\eqref{energy es for global+} of Proposition \ref{enes6} in Section \ref{en1111111}, it appears natural to derive the energy estimates for the derivatives of the remaining component $\tu_1$.
However, the evolution equation \eqref{2.45_Sys_1} of $\tu_1$ is apparently not suitable for this goal since there would be a loss of derivatives if one directly performs the energy estimate.
To get around this obstacle, despite that the component $\tub$ has already been controlled, one should still return back to derive the estimates for the derivatives of the whole $\tu$ by using the original system \eqref{2.12}.

\begin{prop}\label{energy energy}

The following estimates hold.

$(i)$ Under   \asmp{\twd 1}--\asmp{\twd 2}  and that $d\ge 3$,
\begin{equation}\label{energy es for global1212}
  \norm{\tu(t)}_{H^\ell}
 \ls \norm{\tu_0}_{H^\ell} +\int_0^t \left(\norm{     \tub  }_{L^\infty}+\norm{ \nabla   \tub  }_{L^\infty}+\norm{ \Lambda^{\ell-1}   \tub  }_{L^2}+\norm{ \Lambda^{\ell}   \tub  }_{L^2}\right) \rmd\tau.
\end{equation}

$(ii)$ Under   \asmp{\td 1}--\asmp{\td 2} and that $d\ge 2$,
\begin{equation}\label{energy es for global112}
  \norm{\tu(t)}_{H^\ell}
 \ls \norm{\tu_0}_{H^\ell} +\int_0^t \left(\norm{     \tub  }_{L^\infty}^2+\norm{     \tud  }_{L^\infty}+\norm{ \nabla   \tub  }_{L^\infty}+\norm{ \Lambda^{\ell-1}   \tub  }_{L^2}+\norm{ \Lambda^{\ell}   \tub  }_{L^2}\right) \rmd\tau.
\end{equation}
\end{prop}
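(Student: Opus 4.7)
Since Proposition~\ref{enes6} already controls the low/mid-order part $\norm{\tu}_{L^2}^2 + \norm{\Lambda\tub}_{H^{\ell-1}}^2$, the only remaining quantity to bound is the top-order non-dissipative mode $\norm{\Lambda^\ell \tu_1}_{L^2}$. A direct energy estimate on \eqref{2.45_Sys_1} loses a derivative in the convection term $\tilde\lambda_1(\tu,\mathtt{e}_k)\partial_{x_k}\tu_1$, so the plan is to return to the full system \eqref{2.12} and perform a wave decomposition. I would apply $\Lambda^\ell$ to \eqref{2.12} and then dot with the direction-independent left eigenvector $\tilde{l}_1(\tu)$ provided by \asmp{\tb}; setting
\[
\tilde v_1 := \tilde{l}_1(\tu)\cdot \Lambda^\ell \tu,
\]
the left eigenvalue relation $\tilde{l}_1\widetilde A^k = \tilde\lambda_1(\tu,\mathtt{e}_k)\tilde{l}_1$ produces a scalar transport-type equation
\[
\partial_t \tilde v_1 + \sum_{k=1}^d \tilde\lambda_1(\tu,\mathtt{e}_k)\partial_{x_k}\tilde v_1 = \mathcal R(\tu),
\]
where $\mathcal R(\tu)$ gathers (a) the drift $\big((\partial_t + \sum_k \tilde\lambda_1\partial_{x_k})\tilde{l}_1(\tu)\big)\cdot \Lambda^\ell \tu$, (b) the commutator $-\sum_k \tilde{l}_1\cdot[\Lambda^\ell,\widetilde A^k(\tu)]\partial_{x_k}\tu$, and (c) the forcing $\tilde{l}_1\cdot \Lambda^\ell \widetilde Q(\tu)$.

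Next I would run the $L^2$ energy identity
\[
\frac{1}{2}\frac{d}{dt}\norm{\tilde v_1}_{L^2}^2 + \frac{1}{2}\sum_k\int \partial_{x_k}\tilde\lambda_1(\tu,\mathtt{e}_k)\tilde v_1^2\,dx = \int \tilde v_1 \cdot \mathcal R(\tu)\,dx,
\]
and exploit the degeneracy assumptions so that every derivative-heavy term is tied to a $\tub$ factor. Precisely, \asmp{\twd 2} reads $\partial_{\tu_1}\tilde\lambda_1(\tu_1 e_1,\cdot) \equiv 0$, so $\tilde\lambda_1(\tu,\cdot)-\tilde\lambda_1(0,\cdot)=O(|\tub|)$ and every spatial derivative of $\tilde\lambda_1$ has the structure $O(|\tub|)\partial_{x_k}\tu_1 + O(1)\partial_{x_k}\tub$ (the first piece being absorbed via the a priori smallness \eqref{apriori}); similarly, \asmp{\twd 1} combined with the Taylor expansions \eqref{for decay1}--\eqref{for decay2} forces every term of $\widetilde Q(\tu)$ to carry at least one $\tub$-factor. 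Applying the Kato--Ponce-type commutator and product estimates of Lemma~\ref{A2} to (a)--(c) then yields
\[
\norm{\mathcal R(\tu)}_{L^2} \ls \big(\norm{\tub}_{L^\infty} + \norm{\nabla\tub}_{L^\infty} + \norm{\Lambda^{\ell-1}\tub}_{L^2} + \norm{\Lambda^\ell\tub}_{L^2}\big)\norm{\tu}_{H^\ell},
\]
and the transport-drift term is controlled likewise. Since $\tilde{l}_{11}(\tu)\equiv 1$ by \eqref{2.35}, inversion gives $\Lambda^\ell\tu_1 = \tilde v_1 - \sum_{j\ge 2}\tilde{l}_{1j}(\tu)\Lambda^\ell\tu_j + (\text{commutator of }\tilde{l}_1\text{ past }\Lambda^\ell)$, so $\norm{\Lambda^\ell \tu_1}_{L^2}$ is bounded by $\norm{\tilde v_1}_{L^2} + \norm{\Lambda^\ell\tub}_{L^2}$ plus lower-order terms already controlled by Proposition~\ref{enes6}. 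Combining this with case $(i)$ of Proposition~\ref{enes6}, integrating in time, and dividing by $\norm{\tu(t)}_{H^\ell}$ yields \eqref{energy es for global1212}.

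For part $(ii)$, the same scheme runs under the stronger \asmp{\td 1}--\asmp{\td 2}, using the sharper Taylor expansions \eqref{eexx4}--\eqref{eexx44}: every nonlinear contribution of $\widetilde Q_1$ either carries a $\tud$-factor at top order (producing $\norm{\tud}_{L^\infty}$) or is quadratic in $\tub$ (producing $\norm{\tub}_{L^\infty}^2$), which explains the different RHS in \eqref{energy es for global112}. Moreover, case $(i)$ needed $d\ge 3$ only through the Sobolev bound on $\norm{\tub}_{L^\infty}$ exploited in Lemma~\ref{enes3}, which is bypassed in case $(ii)$ via the $\tud$-structure; this extends the result to $d\geq 2$. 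The main technical obstacle throughout is the precise analysis of the commutator (b) after being dotted with $\tilde{l}_1(\tu)$: the top-order piece $\tilde{l}_1\cdot(\Lambda^\ell\widetilde A^k(\tu))\partial_{x_k}\tu$ formally threatens a loss of derivative through its $(1,1)$-entry $\Lambda^\ell\tilde\lambda_1\cdot \partial_{x_k}\tu_1$, and only the (weak) linear degeneracy of the first eigen-family provides the saving $\tub$-factor that closes the estimate.
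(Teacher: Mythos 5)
Your proposal is correct and follows essentially the same strategy as the paper: return to the full system \eqref{2.12}, project onto the isotropic left eigenvector $\tilde{l}_1(\tu)$ to obtain a transport equation for a top-order scalar quantity, use \asmp{\twd 1}--\asmp{\twd 2} (resp.\ \asmp{\td 1}--\asmp{\td 2}) together with the triangular structure \eqref{2.42} so that every source term carries a $\tub$-factor, and then combine with Proposition \ref{enes6}. The only organizational difference is the order of operations: the paper first forms the wave strengths $\wik=\tilde{l}_i(\tu,\sek)\tu_{x_k}$ and then applies $\Lambda^{\ell-1}$, which forces it to restore the transport structure via the curl-type identity \eqref{hhhh5}, whereas you apply $\Lambda^\ell$ first and then project, which avoids that identity at the price of the commutator $\sum_k\tilde{l}_1(\tu)[\Lambda^\ell,\ta^k(\tu)]\tu_{x_k}$ --- harmless, as you correctly note, precisely because by \eqref{2.42} only the $(1,1)$-entry $\tlam_1$ multiplies $\partial_{x_k}\tu_1$ and that entry is saved by the (weak) linear degeneracy.
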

\begin{proof}
Thanks to the entropy estimates in Lemma \ref{enes1}, by the Sobolev interpolation, it suffices to derive the energy estimates for  $\ldll \tu$. Note that this is equivalent to derive the energy estimates of $\ldl1 \nabla \tu $.
In the spirit of one dimensional study {\color{black}\cite{john1974blowup,li1994bookglobal,lizhoukong1994global},} one may do the wave decomposition
\begin{equation}  \label{4.32++}
  \tu_{x_k} = \sum_{i=1}^{n} \wik \tilde{r}_i (\tu,\sek),\quad k=1,\dots,d,
\end{equation}
where
\begin{equation}\label{wii}
 \wik = \tilde{l}_i(\tu, \sek) \tu_{x_k},\hs  i=1,\dots,n;\ k=1,\dots,d.
\end{equation}
Using the product estimate \eqref{product estimate} of Lemma \ref{A2}, one has
\begin{align}
\norm{\ldl1 \tu_{x_k}}_{L^2} \ls & \sum_{i=1}^{n} \norm{\wik}_{L^\infty} \norm{\ldl1 ( \tilde{r}_i(\tu,\sek) - \tilde{r}_i(0,\sek) )}_{L^2}  + \sum_{i=1}^{n} \norm{\ldl1 \wik}_{L^2} \norm{\tilde{r}_i(\tu,\sek)}_{L^\infty}\nonumber \\
\ls & \delta \norm{\tu}_{H^\ell} + \sum_{i=1}^{n} \norm{\ldl1 \wik}_{L^2},\hs k = 1,\dots,d.
\end{align}
It then reduces to derive the energy estimates for $\ldl1 \wik$. But by \eqref{2.35}, it holds that
\begin{equation}\label{wjj}
 \tilde{w}_{j,k} = \tilde{l}_j(\tu, \sek) \tu_{x_k}=\sum_{j'=2}^n \tilde{l}_{jj'}(\tu, \sek)\tu_{j'\!,x_k},\hs j=2,\dots,n;\ k = 1,\dots,d.
\end{equation}
This together with the estimates \eqref{energy es for global}--\eqref{energy es for global+} of Proposition \ref{enes6} implies that
it suffices to derive the energy estimates of $\ldl1 \wokk$ for each $k'=1,\dots,d$. Note that for the notational convenience the index notation $k$ is changed into $k'$.

Now it is crucial to derive the equation of $ \wokk$. Taking $\p_{x_{k'}}$ to the system \eqref{2.12} yields
\begin{equation}\label{hmh1}
 \p_t   \tu_{x_{k'}}  + \sum_{k=1}^{d}  \left( \ta^k(\tu)  \tu_{x_k}  \right)_{x_{k'}}  = \ntu \tq (\tu)  \tu _{x_{k'}}.
\end{equation}
Substituting the decomposition \eqref{4.32++} into \eqref{hmh1}, by \eqref{tezhen} and the system \eqref{2.12}, one can deduce
\begin{align}\label{hh1}
& \sum_{i=1}^n \p_t \wikk \tilde{r}_i(\tu,\sekk ) + \sum_{i=1}^{n} \sum_{k=1}^{d} \tilde{\lambda}_i(\tu, \sek )  \left( \wik\right)_{x_{k'}} \tilde{r}_i (\tu, \sek)\nonumber\\
& \quad =\sum_{i=1}^{n}   \wikk \ntu \tq (\tu) \tilde{r}_i(\tu, \sekk)\nonumber\\& \qquad+  \sum_{i=1}^{n} \wikk \ntu \tilde{r}_i(\tu,\sekk )
\left( \sum_{i'=1}^{n}\sum_{k=1}^{d} \tilde\lambda_{i'}(\tu,\sek) \wiik \tilde{r}_{i'}(\tu,\sek) - \tq(\tu) \right) \\
 & \qquad - \sum_{i=1}^{n} \sum_{k=1}^{d} \wik \tilde{\lambda}_i(\tu,\sek) \ntu \tilde{r}_i(\tu, \sek ) \left( \sum_{i'=1}^{n} \wiikk \tilde{r}_{i'}(\tu,\sekk ) \right)\nonumber \\
& \qquad - \sum_{i=1}^{n} \sum_{k=1}^{d}\wik \left( \ntu \tilde{\lambda}_i(\tu,\sek) \cdot \tilde{r}_{i'}(\tu,\sekk)  \right) \wiikk \tilde{r}_i(\tu,\sek ). \nonumber
\end{align}
Multiplying \eqref{hh1} by $\tilde{l}_1(\tu )$ (owing to {\asmp{\tb}}) and using \eqref{2.28}, one has
\begin{align}\label{hh2}
&   \p_t \wokk   +   \sum_{k=1}^{d} \tilde{\lambda}_1(\tu, \sek )  \left( \wok\right)_{x_{k'}} \nonumber\\
& \quad =\sum_{i=1}^{n}   \tilde{l}_1(\tu )   \ntu \tq (\tu) \tilde{r}_i(\tu, \sekk)  \wikk
 -\sum_{i=1}^{n}   \tilde{l}_1(\tu ) \ntu \tilde{r}_{i} (\tu, \sekk ) \tq(\tu )  \wikk \nonumber\\& \qquad+  \sum_{i,i'=1}^{n} \sum_{k=1}^{d}  \tilde{l}_1(\tu )  \ntu \tilde{r}_i(\tu,\sekk ) \tilde{r}_{i'}(\tu,\sek)
 \tilde\lambda_{i'}(\tu,\sek) \wikk\wiik   \\
 & \qquad - \sum_{i,i'=1}^{n} \sum_{k=1}^{d} \tilde{l}_1(\tu )   \ntu \tilde{r}_i(\tu, \sek )\tilde{r}_{i'}(\tu,\sekk ) \tilde{\lambda}_i(\tu,\sek)  \wik \wiikk \nonumber\\
& \qquad - \sum_{i=1}^{n} \sum_{k=1}^{d} \left( \ntu \tilde{\lambda}_1(\tu,\sek) \cdot \tilde{r}_{i}(\tu,\sekk)  \right)\wok \wikk:=\sum_{m=1}^5 I_m.\nonumber
\end{align}
One next needs to reexamine each term in \eqref{hh2}.
Indeed, by {\asmp{\tb}}, it holds that
\begin{equation}\label{hhhh4}
I_1 =\tilde{l}_1(\tu )    \frac{\partial\tq }{\partial \tu_1}(\tu)   \tilde{w}_{1,k'}+\sum_{j=2}^{n}   \tilde{l}_1(\tu )   \ntu \tq (\tu) \tilde{r}_j(\tu, \sekk) \tilde{w}_{j,k'} ,
\end{equation}
\begin{equation}
I_2 =-\sum_{j=2}^{n}   \tilde{l}_1(\tu ) \ntu \tilde{r}_{j} (\tu, \sekk ) \tq(\tu )  \tilde{w}_{j,k'},
\end{equation}
\begin{equation}
I_3=  \sum_{j=2}^{n}\sum_{i'=1}^{n} \sum_{k=1}^{d}  \tilde{l}_1(\tu )  \ntu \tilde{r}_j(\tu,\sekk ) \tilde{r}_{i'}(\tu,\sek)
 \tilde\lambda_{i'}(\tu,\sek) \tilde{w}_{j,k'}\wiik,
\end{equation}
 \begin{equation}
 I_4=- \sum_{j=2}^{n}\sum_{i'=1}^{n} \sum_{k=1}^{d} \tilde{l}_1(\tu )   \ntu \tilde{r}_j(\tu, \sek )\tilde{r}_{i'}(\tu,\sekk ) \tilde{\lambda}_j(\tu,\sek)  \tilde{w}_{j,k} \wiikk
\end{equation}
and
\begin{equation}\label{hhhh1}
 I_5=-  \sum_{k=1}^{d}   \frac{\partial \tilde{\lambda}_1}{\partial \tu_1}(\tu,\sek)  \wok \tilde{w}_{1,k'} - \sum_{j=2}^{n} \sum_{k=1}^{d} \left( \ntu \tilde{\lambda}_1(\tu,\sek) \cdot \tilde{r}_{j}(\tu,\sekk)  \right)\wok \tilde{w}_{j,k'} .
\end{equation}
Lastly, notice that the left hand side of \eqref{hh2} is not a transported one,
and it causes the inconvenience in doing the energy estimates.
One may artificially recover this with the following:
\begin{equation}
\begin{split}
&\left( \wokk\right)_{x_{k}} - \left( \wok\right)_{x_{k'}} = \left(\tilde{l}_1(\tu)  \tu_{x_{k'}} \right) _{x_k} - \left(\tilde{l}_1(\tu)  \tu_{x_{k}} \right) _{x_{k'}}
\\&\quad= \sum_{i,i'=1}^{n} \tilde{r}_i^T(\tu,\sek) \left( \ntu \tilde{l}_1(\tu) - \left(\ntu \tilde{l}_1(\tu )\right)^T \right) \tilde{r}_{i'}(\tu,\sekk ) \wik\wiikk.
\end{split}
\end{equation}
Moreover, for $i=i'=1$ in the summation, by {\asmp{\tb}},
\begin{equation}
\tilde{r}_1^T(\tu,\sek) \left( \ntu \tilde{l}_1(\tu) - \left(\ntu \tilde{l}_1(\tu )\right)^T \right) \tilde{r}_{1}(\tu,\sekk )
= e_1^T \left( \ntu \tilde{l}_1(\tu) - \left(\ntu \tilde{l}_1(\tu )\right)^T \right) e_1=0.
\end{equation}
Hence,
\begin{equation}\label{hhhh5}
 \left( \wok\right)_{x_{k'}}
  = \left( \wokk\right)_{x_{k}} -\sum_{i+i'\ge 3}^{n} \tilde{r}_i^T(\tu,\sek) \left( \ntu \tilde{l}_1(\tu) - \left(\ntu \tilde{l}_1(\tu )\right)^T \right) \tilde{r}_{i'}(\tu,\sekk ) \wik\wiikk
\end{equation}
Collecting these facts \eqref{hhhh4}--\eqref{hhhh1} and \eqref{hhhh5} together with \eqref{wjj}, one may thus deduce from \eqref{hh2}
that
\begin{align}\label{hhhhh111}
& \p_t \wokk + \sum_{k=1}^{d} \tilde{\lambda}_1(\tu, \sek )  \left( \wokk\right)_{x_{k}}\nonumber  \\
&\quad =\tilde{l}_1(\tu )    \frac{\partial\tq }{\partial \tu_1}(\tu)   \tilde{w}_{1,k'}-  \sum_{k=1}^{d}   \frac{\partial \tilde{\lambda}_1}{\partial \tu_1}(\tu,\sek)  \wok \tilde{w}_{1,k'}
+ \sum_{j=2}^{n}   \ts_{j,k'}(\tu) \tu_{j,x_{k'}}  \\
&\qquad+ \sum_{j=2}^{n}\sum_{i'=1}^{n} \sum_{k,k^*=1}^{d} \ttt_{ji'\!,kk^*}(\tu) \tu_{j,x_{k}}\tu_{i'\!,x_{k^*}}.\nonumber
 \end{align}

Now one may prove $(i)$ under \asmp{\twd 1}--\asmp{\twd 2} and that $d\ge3$. Assume \asmp{\twd 1}--\asmp{\twd 2}, then it holds that $\frac{\partial\tq}{\partial \tu_1}(\tilde{u}_1 e_1) \equiv 0$ and $\frac{\partial\tilde\lambda_1}{\partial \tu_1}(\tilde{u}_1 e_1,\sek) \equiv 0$.
Using this fact, one may fix $\tu_1$ and view $\frac{\partial\tq}{\partial \tu_1} (\tilde{u} )$ and $\frac{\partial \tilde\lambda_1}{\partial \tu_1} (\tilde{u},\sek)$ as functions of $\tub=(\tu_2,\dots,\tu_n)^T$.
By the mean value theorem, it holds that
\begin{equation}
\frac{\partial\tq_i}{\partial \tu_1} (\tilde{u} ) =\frac{\partial\tq_i}{\partial \tu_1} (\tilde{u}_1e_1 )+\sum_{j=2}^n\tv_{ij}(\tu )\tu_j\equiv\sum_{j=2}^n\tv_{ij}(\tu )\tu_j
\end{equation}
and
\begin{equation}
\frac{\partial\tilde\lambda_1}{\partial \tu_1} (\tilde{u},\sek) =\frac{\partial \tilde\lambda_1}{\partial \tu_1} (\tilde{u}_1e_1,\sek)+\sum_{j=2}^n\tw_{j,k}(\tu )\tu_j\equiv\sum_{j=2}^n\tw_{j,k}(\tu)\tu_j.
\end{equation}
Also,
\begin{equation}\label{mn1}
 \tilde\lambda_1 (\tilde{u},\sek) =\tilde\lambda_1 (\tilde{u}_1 e_1,\sek)+ \sum_{j=2}^n\tw^*_{j,k}(\tu )\tu_j\equiv\tilde\lambda_1 (0,\sek)+ \sum_{j=2}^n\tw^*_{j,k}(\tu )\tu_j.
\end{equation}
Hence, under \asmp{\twd 1}--\asmp{\twd 2}, it holds that
\begin{equation}\label{glgl}
 \tilde{l}_1(\tu )    \frac{\partial\tq }{\partial \tu_1}(\tu)   \tilde{w}_{1,k'} =\sum_{i=1}^n\sum_{j=2}^n  \tilde{l}_{1i}(\tu ) \tv_{ij}(\tu )\tu_j \tilde{w}_{1,k'}  ,
\end{equation}
and
\begin{equation}\label{glgl2}
 \sum_{k=1}^{d}   \frac{\partial \tilde{\lambda}_1}{\partial \tu_1}(\tu,\sek)  \wok \tilde{w}_{1,k'}=\sum_{k=1}^{d}\sum_{j=2}^n\tw_{j,k}(\tu)\tu_j\wok \tilde{w}_{1,k'}  .
\end{equation}
Substituting \eqref{glgl}--\eqref{glgl2} into \eqref{hhhhh111}, one can get
\begin{equation}\label{hhhhh11122}
\begin{split}
& \p_t \wokk + \sum_{k=1}^{d} \tilde{\lambda}_1(\tu, \sek )  \left( \wokk\right)_{x_{k}}  \\
&\quad =\sum_{j=2}^n\tx_j(\tu,\nabla\tu)\tu_j
+ \sum_{j=2}^{n}   \ts_{j,k'}(\tu) \tu_{j,x_{k'}}  + \sum_{j=2}^{n}\sum_{i'=1}^{n} \sum_{k,k^*=1}^{d} \ttt_{ji'\!,kk^*}(\tu) \tu_{j,x_{k}}\tu_{i'\!,x_{k^*}}
:=\tnf.
 \end{split}
\end{equation}
The $\ldl1 $ energy estimates on \eqref{hhhhh11122} yields
\begin{equation}\label{hhhhhhh0}
\begin{split}
& \hal\ddt\norm{\ldl1 \wokk}_{L^2}^2 + \sum_{k=1}^{d} \int \tilde{\lambda}_1(\tu, \sek )  \left(\ldl1  \wokk\right)_{x_{k}}\ldl1 \wokk  \\
&\quad = \int \ldl1  \tnf \ldl1 \wokk -\sum_{k=1}^{d}\int \left[\ldl1, \tilde{\lambda}_1(\tu, \sek )\right]  \left(\wokk\right)_{x_{k}}\ldl1 \wokk.
 \end{split}
\end{equation}
Integrating by parts, by \eqref{mn1}, one has
\begin{equation}\label{hhhhhhh1}
\begin{split}
& \left| \int \tilde{\lambda}_1(\tu, \sek )  \left(\ldl1  \wokk\right)_{x_{k}}\ldl1 \wokk \right|
=\hal \left| \int \tilde{\lambda}_1(\tu, \sek )  \left(\abs{\ldl1  \wokk}^2\right)_{x_{k}} \right| \\
&\quad = \hal \left|  \int  (\tw^*(\tu)\tub)_{x_k}  \abs{\ldl1  \wokk}^2 \right|
\\&\quad\ls \left(\norm{ \tub}_{L^\infty}+\norm{\nabla\tub}_{L^\infty} \right)\norm{\tu}_{H^\ell}^2.
 \end{split}
\end{equation}
By the commutator estimates \eqref{commutator estimate}, the product estimates \eqref{product estimate} from Lemma \ref{A2}, the expansion \eqref{mn1} and since $d \ge 3$, $\ell>d/2+1 > 2$, it yields
\begin{equation}
\begin{split}
& \norm{ \left[\ldl1, \tilde{\lambda}_1(\tu, \sek )\right]  \left(\wokk\right)_{x_{k}}}_{L^2}
\\&\quad \ls \norm{ \nabla \tilde{\lambda}_1(\tu, \sek ) }_{L^\infty}\norm{ \Lambda^{\ell-2} \left( \wokk\right)_{x_{k}}}_{L^2}+\norm{ \ldl1 \left( \tilde{\lambda}_1(\tu, \sek )\right) }_{L^\frac{2d}{d-2}}\norm{ \left(  \wokk\right)_{x_{k}}}_{L^d}
\\&\quad \ls  \norm{\nabla\left(\tw^*(\tu)\tub\right) }_{L^\infty}\norm{ \Lambda^{\ell-1}  \wokk }_{L^2}+\norm{ \Lambda^{\ell-1} \left(\tw^*(\tu)\tub\right) }_{L^\frac{2d}{d-2}}\norm{\nabla  \wokk}_{L^d}
\\&\quad\ls \left(\norm{\tub  }_{L^\infty}+ \norm{ \nabla   \tub  }_{L^\infty}+
\norm{\ldll \tub}_{L^2} \right) \norm{\tu}_{H^\ell}.
 \end{split}
\end{equation}
Lastly, one may apply the product estimates \eqref{product estimate} from Lemma \ref{A2} to obtain, by the expression of $\tnf$ in \eqref{hhhhh11122},
\begin{equation}\label{hhhhhhh5}
\begin{split}
&  \norm{ \ldl1  \tnf}_{L^2} \ls  \norm{     \tub  }_{L^\infty}+\norm{ \nabla   \tub  }_{L^\infty}+\norm{ \Lambda^{\ell-1}   \tub  }_{L^2}+\norm{ \Lambda^{\ell}   \tub  }_{L^2} .
 \end{split}
\end{equation}
In view of these estimates \eqref{hhhhhhh1}--\eqref{hhhhhhh5}, it follows from \eqref{hhhhhhh0} that
\begin{equation}\label{hhhhhhh6}
 \ddt\norm{\ldl1 \wokk}_{L^2}^2\ls \left(\norm{     \tub  }_{L^\infty}+\norm{ \nabla   \tub  }_{L^\infty}+\norm{ \Lambda^{\ell-1}   \tub  }_{L^2}+\norm{ \Lambda^{\ell}   \tub  }_{L^2}\right)\norm{\tu}_{H^\ell}.
\end{equation}

Finally, integrating the inequality \eqref{hhhhhhh6} directly in time, by \eqref{4.32++}--\eqref{wjj} together with the estimates \eqref{energy es for global}, one has
\begin{equation}
  \norm{\tu(t)}_{H^\ell}^2
 \ls \norm{\tu_0}_{H^\ell}^2 +\int_0^t \left(\norm{     \tub  }_{L^\infty}+\norm{ \nabla   \tub  }_{L^\infty}+\norm{ \Lambda^{\ell-1}   \tub  }_{L^2}+\norm{ \Lambda^{\ell}   \tub  }_{L^2}\right)\norm{\tu}_{H^\ell} \rmd\tau.
\end{equation}
By Cauchy's inequality, this implies \eqref{energy es for global1212}.

One then turns to prove $(ii)$ under \asmp{\td 1}--\asmp{\td 2} and that $d\ge2$. Assume \asmp{\td 1}--\asmp{\td 2}, then it holds that $ \frac{\partial( J\tq) }{\partial \tu_1}(\tu) \equiv 0$ and $\frac{\partial \tilde\lambda_1}{\partial \tu_1} (\tilde{u},\sek) \equiv 0$.
By this fact, it follows from \eqref{hhhhh111} that
\begin{equation}\label{hhhhh111333}
\begin{split}
& \p_t \wokk + \sum_{k=1}^{d} \tilde{\lambda}_1(\tu, \sek )  \left( \wokk\right)_{x_{k}}  \\
&\quad =\tilde{l}_1(\tu ) \frac{\partial(J^{-1}) }{\partial \tu_1}(\tu)J(\tu)\tq(\tu)  \tilde{w}_{1,k'}
+ \sum_{j=2}^{n}   \ts_{j,k'}(\tu) \tu_{j,x_{k'}}  \\
&\qquad+ \sum_{j=2}^{n}\sum_{i'=1}^{n} \sum_{k,k^*=1}^{d} \ttt_{ji'\!,kk^*}(\tu) \tu_{j,x_{k}}\tu_{i'\!,x_{k^*}}
:=\tng.
 \end{split}
\end{equation}
The $\ldl1 $ energy estimates on \eqref{hhhhh111333} yields
\begin{equation}\label{hhhhhhh02}
\begin{split}
& \hal\ddt\norm{\ldl1 \wokk}_{L^2}^2 + \sum_{k=1}^{d} \int \tilde{\lambda}_1(\tu, \sek )  \left(\ldl1  \wokk\right)_{x_{k}}\ldl1 \wokk  \\
&\quad = \int \ldl1  \tng \ldl1 \wokk -\sum_{k=1}^{d}\int \left[\ldl1, \tilde{\lambda}_1(\tu, \sek )\right]  \left(\wokk\right)_{x_{k}}\ldl1 \wokk.
 \end{split}
\end{equation}
Integrating by parts, by the linear degeneracy \asmp{\td 2}, one has
\begin{equation}\label{hhhhhhh12}
\begin{split}
& \left| \int \tilde{\lambda}_1(\tu, \sek )  \left(\ldl1  \wokk\right)_{x_{k}}\ldl1 \wokk \right|
=\hal \left| \int \tilde{\lambda}_1(\tu, \sek )  \left(\abs{\ldl1  \wokk}^2\right)_{x_{k}}  \right|  \\
&\quad = \hal \left| \sum_{j=2}^{n} \int \nabla_{\!\tu_j}\tilde{\lambda}_1(\tu, \sek )\left(\tu_j\right)_{x_{k}}  \abs{\ldl1  \wokk}^2 \right|
\\
&\quad \ls \norm{\nabla\tub}_{L^\infty} \norm{\ldl1  \wokk}_{L^2}^2.
 \end{split}
\end{equation}
By the commutator estimates \eqref{commutator estimate} from Lemma \ref{A2}, by again \asmp{\td 2}, it holds that
\begin{equation}
\begin{split}
& \norm{ \left[\ldl1, \tilde{\lambda}_1(\tu, \sek )\right]  \left(\wokk\right)_{x_{k}}}_{L^2}
\\&\quad \ls \norm{ \nabla \tilde{\lambda}_1(\tu, \sek ) }_{L^\infty}\norm{ \Lambda^{\ell-2} \left( \wokk\right)_{x_{k}}}_{L^2}+\norm{ \ldl1 \left( \tilde{\lambda}_1(\tu, \sek )\right) }_{L^\frac{2\pd}{\pd-2}}\norm{ \left(  \wokk\right)_{x_{k}}}_{L^{\pd}}
\\&\quad \ls \sum_{j=2}^{n}\norm{ \nabla_{\!\tu_j} \tilde{\lambda}_1(\tu, \sek )\nabla \tu_j }_{L^\infty}\norm{ \Lambda^{\ell-1}  \wokk }_{L^2}
\\&\qquad +\sum_{j=2}^{n} \norm{ \Lambda^{\ell-2}\left(\nabla_{\!\tu_j} \tilde{\lambda}_1(\tu, \sek )\nabla \tu_j\right) }_{L^\frac{2\pd}{\pd-2}}\norm{\nabla  \wokk}_{L^\pd}
\\&\quad\ls  \left(\norm{ \nabla   \tub  }_{L^\infty}+ \norm{ \Lambda^{\ell-1}   \tub  }_{L^2} +\norm{ \Lambda^{\ell}   \tub  }_{L^2} \right)\norm{\tu}_{H^\ell},
 \end{split}
\end{equation}
where $\pd =  d$ for $d \geq 3$; $\pd =  4$ for $d = 2$ and $l \geq  {5}/{2}$; $\pd =   {2}/({3-l})$ for $d = 2$ and $2 < l < {5}/{2}.$

Lastly, one may apply the product estimates \eqref{product estimate} from Lemma \ref{A2} to obtain, by the expressions \eqref{eexx4}--\eqref{eexx44} of $\tq$,
\begin{equation}\label{hhhhhhh52}
\begin{split}
&  \norm{ \ldl1  \tng}_{L^2} \ls  \norm{     \tub  }_{L^\infty}^2+\norm{     \tud  }_{L^\infty}+\norm{ \nabla   \tub  }_{L^\infty}+\norm{ \Lambda^{\ell-1}   \tub  }_{L^2}+\norm{ \Lambda^{\ell}   \tub  }_{L^2} .
 \end{split}
\end{equation}
In view of these estimates \eqref{hhhhhhh12}--\eqref{hhhhhhh52}, it follows from \eqref{hhhhhhh02} that
\begin{equation}\label{hhhhhhh62}
 \ddt\norm{\ldl1 \wokk}_{L^2}^2\ls \left(\norm{     \tub  }_{L^\infty}^2+\norm{     \tud  }_{L^\infty}+\norm{ \nabla   \tub  }_{L^\infty}+\norm{ \Lambda^{\ell-1}   \tub  }_{L^2}+\norm{ \Lambda^{\ell}   \tub  }_{L^2}\right)\norm{\tu}_{H^\ell} .
\end{equation}
This is an improvement of \eqref{hhhhhhh6} which then, together with the estimates \eqref{energy es for global+}, leads to \eqref{energy es for global112} in this case.
\end{proof}

\section{$L^q$ estimates under \asmp{\td 3}}\label{sec:6+}

This section is devoted to derive an $L^q$ estimates of the solution in preparation to prove Theorem \ref{thm:1.3} under \asmp{\td 1}--\asmp{\td 3} and the initial condition  $\tu_{0,1}\in L^q$ with $1\le q\le 2$.
However, one may not be able to derive the $L^q$ estimates of $\tu_1$ by directly using the subsystem \eqref{2.45_Sys_1}.
The essential difficulty lies in the presence of the second summation involving $\tub$ on the left hand side.
One feasible way is going back to the original system \eqref{2.12} of $\tu$ and then resorting to the wave decomposition.

In the spirit of the one dimensional study \cite{john1974blowup,li1994bookglobal},
one may consider, owing to \asmp{\tb},
\begin{equation} \label{6+.2}
\tvv_1 = \tll_1(\tu) \tu .
\end{equation}
But by \eqref{2.35}, it holds that
\begin{equation} \label{6+.4}
 \tvv_1= \tu_1  + \sum_{j=2}^{n}\tll_{1j}(\tu)  \tu_j.
\end{equation}
The key point is to derive the $L^q$ estimates of $\tvv_1$, and the result is stated as the following.
\begin{prop} \label{prop:6+.1}
Under \asmp{\td 1}--\asmp{\td 3} and that $d \ge 2$ and $\tvv_{1,0}\in L^q$ with $1\le q\le 2$, it holds that
\begin{equation}\label{lpp}
\begin{split}
\norm{\tvv_1}_{L^{q}} & \ls \exp \left( \int_0^t \left( \norm{\tud }_{L^\infty} + \norm{\nabla \tub }_{L^\infty} + \norm{\tub}_{L^\infty}^2 \right) \rmd\tau\right)  \\
&\quad \times \left( \norm{\tvv_{1,0}}_{L^{q}} + \int_0^t \norm{\tub }_{L^{2q}} \left( \norm{\tud }_{L^{2q}} + \norm{\nabla \tub }_{L^{2q}} + \norm{\tub }_{L^{4q}}^2 \right) \rmd\tau\right).
\end{split}
\end{equation}
\end{prop}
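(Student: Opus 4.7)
The plan is to derive a quasilinear transport equation for $\tvv_1 = \tll_1(\tu)\tu$ and then run a direct $L^q$ energy estimate closed by Grönwall's inequality. Left-multiplying the system \eqref{2.12} by $\tll_1(\tu)$ and using the eigenvector identity $\tll_1\ta^k = \tlam_1(\tu,\sek)\tll_1$ to eliminate the convective derivative of $\tu$ through $\tll_1$ gives
\begin{equation*}
\p_t \tvv_1 + \sum_{k=1}^{d} \tlam_1(\tu,\sek)\,\p_{x_k}\tvv_1 = \tll_1(\tu)\tq(\tu) + \mathcal{R}, \quad \mathcal{R} := \Bigl(\p_t\tll_1(\tu) + \sum_{k}\tlam_1\,\p_{x_k}\tll_1(\tu)\Bigr)\tu.
\end{equation*}
The commutator $\mathcal{R}$ is then controlled by combining three structural facts: $\tll_{11}\equiv 1$ by \eqref{2.35} (so $\p_{\tu_i}\tll_{11}=0$ and the sum defining $\mathcal{R}$ effectively ranges over $j\ge 2$, producing an overall $\tub$-prefactor); the first column of $\ta^k-\tlam_1 I$ vanishes by \eqref{2.40} (so $(\ta^k-\tlam_1 I)\tu_{x_k}$ depends only on $\na\tub$); and under \asmp{\td 1} the expansions \eqref{eexx4}--\eqref{eexx44} yield $|\tq|\ls |\tud|+|\tub|^2$. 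Together these produce the clean bound $|\mathcal{R}|\ls |\tub|\bigl(|\tud|+|\na\tub|+|\tub|^2\bigr)$.

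The next step is to analyze $\tll_1\tq$, which is where \asmp{\td 3} enters. Combined with \eqref{3.10++22}, the identity $\tll_1(0)\Theta=0$ and the invertibility of $\thd$ force $\tll_{1p}(0)=0$ for $p\ge r+1$, so that $\tll_{1p}(\tu) = O(|\tu|)$. Splitting $\tll_1\tq = \tq_1 + \sum_{q=2}^{r}\tll_{1q}(\tu)\tq_q + \sum_{p=r+1}^{n}\tll_{1p}(\tu)\tq_p$ and applying \eqref{eexx4}--\eqref{eexx44} to each piece gives, modulo higher-order terms,
\begin{equation*}
|\tll_1\tq| \ls |\tud||\tu_1| + |\tub||\tud| + |\tub|^2|\tu_1| + |\tub|^3.
\end{equation*}
Finally, the identity $\tu_1 = \tvv_1 - \sum_{j\ge 2}\tll_{1j}(\tu)\tu_j = \tvv_1 + O(|\tub|)$ removes every bare $\tu_1$ in favor of $\tvv_1$ plus a $\tub$-correction, after which the full right hand side of the transport equation decomposes as
\begin{equation*}
\tll_1\tq + \mathcal{R} = O\bigl((|\tud|+|\tub|^2)|\tvv_1|\bigr) + O\bigl(|\tub|(|\tud|+|\na\tub|+|\tub|^2)\bigr),
\end{equation*}
the first group being absorbable into a Grönwall exponential and the second being a genuine $L^q$ source.

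With this decomposition in hand, multiply the transport equation by $|\tvv_1|^{q-2}\tvv_1$ (regularized as $(\tvv_1^2+\ep)^{(q-2)/2}\tvv_1$ with $\ep\to 0^+$ when $q=1$) and integrate over $\rd$. After an integration by parts, the drift contribution equals $-\frac{1}{q}\int\sum_k\p_{x_k}\tlam_1(\tu,\sek)\,|\tvv_1|^q\rmd x$; invoking \asmp{\td 2} ($\p_{\tu_1}\tlam_1\equiv 0$) gives $\p_{x_k}\tlam_1 = O(|\na\tub|)$ and hence a bound of $\|\na\tub\|_{L^\infty}\|\tvv_1\|_{L^q}^q$. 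The Grönwall-absorbable portion of the source contributes $(\|\tud\|_{L^\infty}+\|\tub\|_{L^\infty}^2)\|\tvv_1\|_{L^q}^q$. The remaining piece is estimated in $L^q$ by Hölder with exponent pair $(2q,2q)$ and the factorization $|\tub|^3 = |\tub|\cdot|\tub|^2$, yielding the bound $\|\tub\|_{L^{2q}}\bigl(\|\tud\|_{L^{2q}}+\|\na\tub\|_{L^{2q}}+\|\tub\|_{L^{4q}}^2\bigr)$. Dividing by $\|\tvv_1\|_{L^q}^{q-1}$ produces a linear ODE for $\|\tvv_1\|_{L^q}$, and a standard Grönwall inequality delivers \eqref{lpp}.

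The hard part is the second paragraph: it is the simultaneous use of all three degeneracy conditions \asmp{\td 1}, \asmp{\td 2}, \asmp{\td 3} together with the isotropy \asmp{\tb} and the structural identities $\tll_{11}\equiv 1$, $\ta^k_{j1}=0$ for $j\ge 2$ of the partially normalized coordinates that guarantees every appearance of $\tu_1$ in $\tll_1\tq + \mathcal{R}$ is either multiplied by $\tub$ (yielding a source term) or else appears as $\tvv_1$ modulo $O(|\tub|)$ (yielding a Grönwall factor). Without this structural cancellation, a term of the form $|\tud||\tu_1|$ could not be split as needed and the Grönwall argument would not close with the asserted right hand side.
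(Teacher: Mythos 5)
Your argument is correct and follows essentially the same route as the paper: the same transport equation for $\tvv_1$, the same use of \asmp{\td 3} (via $\tll_1(0)\Theta=0$ and the invertibility of $\thd$) to get $\tll_{1p}(0)=0$, the same expansions under \asmp{\td 1}, the same treatment of the drift term via \asmp{\td 2}, and the same H\"older/Gr\"onwall closing in $L^q$. The only cosmetic difference is that you control the commutator $\mathcal{R}$ through the block structure \eqref{2.40}--\eqref{2.42} of $\ta^k-\tlam_1 I$, while the paper reaches the identical conclusion via the wave decomposition \eqref{4.32++} and the vanishing of $\tlam_1-\tlam_{j'}$ for $j'=1$.
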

\begin{proof}
First of all, one needs to derive the equation of $\tvv_1$. Multiplying the system \eqref{2.12} by $\tll_1(\tu)$,
by \eqref{2.330}, \eqref{2.35} and an use of \eqref{2.12} to substitute $\partial_t\tu$, one can get
\begin{align} \label{6+.5}
& \p_t \tvv_1 + \sum_{k=1}^{d} \tlam_1(\tu,\sek) (\tvv_1)_{x_k}\nonumber \\
&\quad = \tll_1(\tu) \tq(\tu) + \partial_t\tu^T \ntu \tll_1(\tu) \tu + \sum_{k=1}^{d} \tlam_1(\tu,\sek) \tu_{x_k}^T \ntu \tll_1(\tu) \tu\nonumber \\
&\quad = \tll_1(\tu) \tq(\tu) + \sum_{j=2}^{n} \sum_{i=1}^{n} \tq_i(\tu) \frac{\p \tll_{1j}(\tu)}{\p \tu_i} \tu_{j}  \\
& \qquad  + \sum_{k=1}^{d} \sum_{j,j'=2}^{n} \sum_{i=1}^{n} \left(\tlam_1(\tu,\sek) - \tlam_{j'}(\tu,\sek) \right) \wjjk \trr_{ij'}(\tu) \frac{\p \tll_{1j}(\tu)}{\p \tu_i} \tu_j. \nonumber
\end{align}
Here in the second equality the wave composition \eqref{4.32++} for $\tu_{x_k}$ is used.

By \asmp{\td 3}, \eqref{theta1}, \eqref{3.10++22} and \eqref{3.10++}, it holds that
\begin{equation} \label{C3eq}
\tll_{1p}(0) = 0, \hs p = r+1, \dots, n,
\end{equation}
which implies that
\begin{equation} \label{6+.6}
\tll_1(\tu) \tq(\tu) = \sum_{q=1}^{r} \tll_{1q}(\tu) \tq_q(\tu) + \sum_{p=r+1}^{n} \left( \tll_{1p}(\tu) - \tll_{1p}(0) \right)  \tq_p(\tu).
\end{equation}
Substituting \eqref{6+.6} and the expansions \eqref{eexx4}--\eqref{eexx44} (due to \asmp{\td 1}) into \eqref{6+.5}, using the expression \eqref{wjj} for $\wjjk$ and \eqref{6+.4} for $\tu_1$, one may deduce
\begin{equation} \label{6+.7}
\begin{split}
&\p_t \tvv_1 + \sum_{k=1}^{d} \tlam_1(\tu,\sek) (\tvv_1)_{x_k}
 \\ &\quad= \sum_{p=r+1}^{n} \tr_{1p}(\tu) \tvv_1 \tu_p + \sum_{p=r+1}^{n} \sum_{j=2}^{n} \tr_{jp}(\tu) \tu_j \tu_p+\sum_{k=1}^{d} \sum_{j,j'=2}^{n} \tH_{jj'\!,k}(\tu) \tu_j  (\tu_{j'}) _{x_k}\\
& \qquad+ \sum_{j,j'=2}^{n} \tga_{1jj'}(\tu) \tvv_1 \tu_j \tu_{j'}   + \sum_{j,j'\!,j''=2}^{n} \tga_{jj'j''}(\tu) \tu_j \tu_{j'} \tu_{j''}.
\end{split}
\end{equation}
The key point lying in \eqref{6+.7} is that unlike \eqref{2.45_Sys_1} there is no term involving $\tub$ on the left hand side,
which makes it possible to derive the $L^q$ estimate for $\tvv_1$.

Now multiplying the equation by $|v_1|^{q-2}\, v_1$ and then integrating over $\mathbb{R}^d$, one has
\begin{equation}\label{thth}
\begin{split}
  \frac{\rmd}{\rmd t} \norm{\tvv_1}_{L^{q}}^{q} & \ls \sum_{k=1}^{d} \abs{\int  \tlam_1(\tu,\sek) ( |\tvv_1|^{q})_{x_k} } \\&\quad+ \int \left(  |\tvv_1|^{q} |\tud| +   |\tvv_1|^{q-1} |\tub|( |\tud|+|\nabla \tub|)   +  |\tvv_1|^{q} |\tub|^2 + |\tvv_1|^{q-1} |\tub|^3\right).
\end{split}
\end{equation}
By the linear degeneracy \asmp{\td 2}, it holds that
\begin{equation}\label{tt1}
\abs{  \int  \tlam_1(\tu,\sek) ( |\tvv_1|^{q})_{x_k} }= \abs{  \sum_{j=2}^{n}   \frac{\p \tlam_1}{\p \tu_j} (\tu,\sek)( \tu_j) _{x_k} |\tvv_1|^{q} } \ls \norm{\tvv_1}_{L^{q}}^{q} \norm{\nabla \tub}_{L^\infty}.
\end{equation}
By H\"older's inequality, the second integral in the right hand side of \eqref{thth} is easily bounded by
\begin{equation}\label{tt2}
\begin{split}
 & \norm{\tvv_1}_{L^{q}}^{q} \norm{\tud}_{L^\infty} + \norm{\tvv_1}_{L^{q}}^{q-1} \norm{\tub}_{L^{2q}} \left( \norm{\tud}_{L^{2q}} + \norm{\nabla \tub}_{L^{2q}} \right) \\
& \quad+ \norm{\tvv_1}_{L^{q}}^{q} \norm{\tub}_{L^\infty}^2 + \norm{\tvv_1}_{L^{q}}^{q-1} \norm{\tub}_{L^{2q}}\norm{\tub}_{L^{4q}}^2.
\end{split}
\end{equation}
Plugging the estimates \eqref{tt1}--\eqref{tt2} into \eqref{thth} yields 
\begin{equation}\label{tt3}
\begin{split}
\frac{\rmd}{\rmd t} \norm{\tvv_1}_{L^{q}} \ls& \norm{\tvv_1}_{L^{q}} \left(  \norm{\tud}_{L^\infty} + \norm{\nabla \tub}_{L^\infty} + \norm{\tub}_{L^\infty}^2 \right)
 \\&+ \norm{\tub}_{L^{2q}} \left( \norm{\tud}_{L^{2q}} + \norm{\nabla \tub}_{L^{2q}}+ \norm{\tub}_{L^{4q}}^2 \right).
\end{split}
\end{equation}
Applying the Gronwall lemma to \eqref{tt3}, one can deduce \eqref{lpp}.
\end{proof}
\begin{remark}\label{v10data}
In the statement of Proposition \ref{prop:6+.1}, it is required that $\tvv_{1,0}\in L^q$ with $1\le q\le 2$. However, this can be guaranteed under the assumptions of Theorem \ref{thm:1.3}. Indeed, by \eqref{6+.4} and \eqref{C3eq}, it holds that
\begin{equation}
\tvv_1 =\tu_1+\sum_{q=2}^r \tll_{1q}(\tu) \tu_q+\sum_{p=r+1}^{n} \left( \tll_{1p}(\tu) - \tll_{1p}(0) \right)\tu_p,
\end{equation}
which implies, since $1\le p\le q\le 2$,
\begin{equation}
\norm{\tvv_{1,0}}_{L^q}\ls \norm{\tu_{1,0}}_{L^q}+\norm{\tuc_{0}}_{L^q}+\norm{\tud_{0}}_{L^{2q}}^2\ls \norm{\tu_{1,0}}_{L^q}+\norm{\tuc_{0}}_{L^p}+\norm{\tu_{0}}_{H^{\ell}}^2.
\end{equation}
\end{remark}

\section{Decay estimates}\label{en3333333}

In order to close the estimates stated in Proposition \ref{energy energy} and Proposition \ref{prop:6+.1},
this section is devoted to derive the decay estimates for the dissipative component $\tub$.
\subsection{Bootstrap estimates}
In this subsection, one may derive the bootstrap energy estimates based on the family of scaled energy estimates with {\it minimum fractional} derivative counts derived in Section  \ref{en1111111}, which implies that the decay of the higher-order norms of $\tub(t)$ can be deduced from the decay of the lower-order norms.
\begin{lem}\label{good}

There exists a positive constant $\lambda>0$ such that the following estimates hold.

$(i)$ Under   \asmp{\twd 1} and that $d\ge 3$, for $1\le s< d/2$,
\begin{equation}\label{00000}
 \norm{\lds \tub(t)}_{H^{\ell-s}}^2\ls e^{-\lambda t}\norm{\lds \tub_0}_{H^{\ell-s}}^2 +\int_0^te^{-\lambda(t-\tau)} \norm{\lds\tuc(\tau)}_{L^2}^2 \rmd\tau.
\end{equation}

$(ii)$ Under   \asmp{\td 1}, for $1\le s\le \ell-1$ and $1\le s<d/2+1$,
\begin{equation}
\begin{split}
\norm{\lds \tub(t)}_{H^{\ell-s}}^2\ls & e^{-\lambda t}\norm{\lds \tub_0}_{H^{\ell-s}}^2  \\
& +\int_0^te^{-\lambda(t-\tau)} \left(\norm{\lds\tuc(\tau)}_{L^2}^2 + \delta  \norm{\Lambda^{s-1}  \tud (\tau)}_{L^2}^2+ \delta \norm{ \tub(\tau)  }_{L^\infty}^4 \right)  \rmd \tau. \label{000001}
\end{split}
\end{equation}
\end{lem}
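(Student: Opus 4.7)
The plan is to promote the synthesis inequalities of Proposition~\ref{enes5} into coercive linear ODEs for $\mathcal{E}_s(\tub)$ and then to apply a Duhamel/Gronwall argument. The basic observation is that the dissipation appearing on the left of \eqref{energy es for decay} (resp.~\eqref{energy es for decay1}) controls $\mathcal{E}_s(\tub)$ up to a single lower-order term $\norm{\lds\tuc}_{L^2}^2$, and that the restrictions placed on $s$ are exactly those for which Sobolev interpolation allows the remaining $L^\infty$ nonlinearities to be absorbed.

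First I would establish the coercivity step. Since $\norm{f}_{H^{\ell-s}}^2 \sim \norm{f}_{L^2}^2 + \norm{\Lambda^{\ell-s}f}_{L^2}^2$, the decomposition $\tub=(\tuc,\tud)$ yields
\[
\mathcal{E}_s(\tub)\;\sim\;\norm{\lds\tub}_{H^{\ell-s}}^2 \;\ls\; \norm{\lds\tud}_{H^{\ell-s}}^2 + \norm{\Lambda^{s+1}\tuc}_{H^{\ell-s-1}}^2 + \norm{\lds\tuc}_{L^2}^2,
\]
so that $\lambda\mathcal{E}_s(\tub)$ can be bought off from the dissipation at the cost of a single $C\norm{\lds\tuc}_{L^2}^2$. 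The restrictions on $s$ enter at the next step, where the nonlinear right-hand sides of Proposition~\ref{enes5} are absorbed via the interpolation $\norm{f}_{L^\infty}^2 \ls \norm{\Lambda^{s_1}f}_{L^2}^2 + \norm{\Lambda^{s_2}f}_{L^2}^2$, valid whenever $s_1<d/2<s_2$ (Lemma~\ref{A1}). In case~(i) the hypothesis $s<d/2$ together with $\ell-1>d/2$ gives $\norm{\tub}_{L^\infty}^2 + \norm{\na\tub}_{L^\infty}^2 \ls \mathcal{E}_s(\tub)$, so the entire right-hand side of \eqref{energy es for decay} is bounded by $\delta\mathcal{E}_s(\tub) + \delta\norm{\lds\tuc}_{L^2}^2$. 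In case~(ii) the widened hypothesis $s<d/2+1$ still yields $\norm{\na\tub}_{L^\infty}^2 \ls \mathcal{E}_s(\tub)$, while $s-1<d/2<\ell$ produces $\norm{\tud}_{L^\infty}^2 \ls \norm{\Lambda^{s-1}\tud}_{L^2}^2 + \norm{\Lambda^\ell\tud}_{L^2}^2$ whose high-frequency piece is absorbed by $\norm{\lds\tud}_{H^{\ell-s}}^2$; the cross term $\norm{\lds\tub}_{L^2}\norm{\tub}_{L^\infty}^2$ is split via Young's inequality combined with the a-priori smallness $\norm{\tu}_{H^\ell}\ls\delta$ into an absorbable $\ep\mathcal{E}_s(\tub)$ plus a forcing term of the form $C\delta\,\norm{\tub}_{L^\infty}^4$.

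Assembling these two steps and choosing $\delta$ small enough that every $\delta\mathcal{E}_s(\tub)$ contribution is absorbed on the left, one obtains in case~(i)
\[
\ddt\mathcal{E}_s(\tub) + \lambda\,\mathcal{E}_s(\tub) \;\ls\; \norm{\lds\tuc}_{L^2}^2,
\]
and in case~(ii) the same inequality with the additional forcing $\delta\norm{\Lambda^{s-1}\tud}_{L^2}^2 + \delta\norm{\tub}_{L^\infty}^4$. Integrating against the weight $e^{\lambda t}$ and invoking $\mathcal{E}_s(\tub)\sim\norm{\lds\tub}_{H^{\ell-s}}^2$ then delivers \eqref{00000}--\eqref{000001}. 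The main technical obstacle is the bookkeeping in Step~2: the asymmetric thresholds on $s$ must be shown to be exactly the sharp values at which every $L^\infty$ nonlinearity is either absorbable into the dissipation or matches precisely one of the forcing terms listed on the right of \eqref{00000}--\eqref{000001}. The relaxation from $s<d/2$ in (i) to $s<d/2+1$ in (ii) mirrors the improvement in Proposition~\ref{enes5} brought by \asmp{\td 1}: the expansion \eqref{eexx4}--\eqref{eexx44} puts the slowest-decaying $L^\infty$ nonlinearity onto $\tud$ (hence $\Lambda^{s-1}\tud$) rather than the whole of $\tub$, shifting the interpolation threshold by one derivative.
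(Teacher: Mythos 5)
Your proposal is correct and follows essentially the same route as the paper: apply Proposition~\ref{enes5}, use Sobolev interpolation under the stated restrictions on $s$ to absorb the $L^\infty$ terms (with the high-frequency part of $\norm{\Lambda^{s-1}\tud}_{H^{\ell-s}}$ absorbed by the dissipation in case (ii)), add $\norm{\lds\tuc}_{L^2}^2$ to both sides to recover coercivity of $\mathcal{E}_s(\tub)$, and conclude by Gronwall. The only cosmetic difference is in how the cross term $\norm{\lds\tub}_{L^2}\norm{\tub}_{L^\infty}^2$ is split by Young's inequality, which is immaterial for the way the lemma is later used.
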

\begin{proof}
One may first prove $(i)$ under \asmp{\twd 1} and that $d\ge 3$.
Recall the estimates \eqref{energy es for decay} of Proposition \ref{enes5} and then fix $1\le s< d/2$ (and hence $s<\ell-1$). Since $\ell>d/2+1$,
\begin{equation}\label{gga}
\norm{  \tub  }_{L^\infty}^2+\norm{\na \tub  }_{L^\infty}^2\ls \norm{\Lambda^s  \tub }_{H^{\ell-s}}^2,
 \end{equation}
then by the smallness of $\delta$, it follows from \eqref{energy es for decay} of Proposition \ref{enes5} that
\begin{equation}\label{ggg}
 \ddt\mathcal{E}_s(\tub)+  \norm{\Lambda^{s } \tud}_{H^{\ell-s }}^2 +  \norm{\Lambda^{s+1} \tuc}_{H^{\ell-s-1}}^2
 \ls \delta  \norm{\lds\tuc}_{L^2}^2 .
\end{equation}
Adding $\norm{\lds\tuc}_{L^2}^2$ to both sides of \eqref{ggg} yields
\begin{equation}
 \ddt\mathcal{E}_s(\tub) +  \norm{\Lambda^{s} \tub}_{H^{\ell-s}}^2
 \ls  \norm{\lds\tuc}_{L^2}^2  .
\end{equation}
Since $\mathcal{E}_s(\tub)$ is equivalent to $\norm{\lds \tub}_{H^{\ell-s}}^2$, it then holds that
\begin{equation}\label{jj1}
 \ddt\mathcal{E}_s(\tub)+ \lambda \mathcal{E}_s(\tub)
 \ls \norm{\lds\tuc}_{L^2}^2
\end{equation}
for some constant $\lambda>0$. Applying the Gronwall lemma to \eqref{jj1}, one concludes \eqref{00000}.

One then turns to prove $(ii)$ under \asmp{\td 1}.
Recall the estimates \eqref{energy es for decay1} of Proposition \ref{enes5} and then fix $s$ so that $1\le s\le \ell-1$ and $1\le s<d/2+1$.
Then it holds that
\[
\norm{\nabla \tub}_{L^\infty}^2 \ls \norm{\Lambda^s \tub}_{H^{\ell-s}}^2 \  \text{and} \  \norm{ \tud}_{L^\infty}^2 \ls \norm{\Lambda^{s-1} \tud}_{H^{\ell-s}}^2.
\]
It thus follows from \eqref{energy es for decay1} that
 \begin{equation} \label{ggg1}
\begin{split}
 &\ddt\mathcal{E}_s(\tub)+  \norm{\Lambda^{s } \tud}_{H^{\ell-s }}^2 +  \norm{\Lambda^{s+1} \tuc}_{H^{\ell-s-1}}^2
\\&\quad \ls \delta \left(\pnorm{\lds \tuc }{2}^2+ \norm{ \Lambda^{s-1} \tud  }_{L^2}^2 +\norm{ \tub  }_{L^\infty}^4 \right)+\norm{\lds \tub}_{L^2}\norm{ \tub  }_{L^\infty}^2.
 \end{split}
\end{equation}
Adding $\norm{\lds\tuc}_{L^2}^2$ to both sides of \eqref{ggg1}, by Cauchy's inequality, one can obtain
\begin{equation}
 \ddt\mathcal{E}_s(\tub) +  \norm{\Lambda^{s} \tub}_{H^{\ell-s}}^2
 \ls  \norm{\lds\tuc}_{L^2}^2+\delta \norm{ \Lambda^{s-1} \tud  }_{L^2}^2 + \delta \norm{ \tub  }_{L^\infty}^4,
\end{equation}
which implies
\begin{equation}\label{jj11}
 \ddt\mathcal{E}_s(\tub)+ \lambda \mathcal{E}_s(\tub)
 \ls   \norm{\lds\tuc}_{L^2}^2+ \delta\norm{ \Lambda^{s-1} \tud  }_{L^2}^2+ \delta \norm{ \tub  }_{L^\infty}^4  .
\end{equation}
Applying the Gronwall lemma to \eqref{jj11} yields \eqref{000001}.
\end{proof}

\subsection{Linear decay estimates}
In order to derive the decay of the lower-order norms of $\tub(t)$ appearing in Lemma \ref{good}, in this subsection one may consider the linear decay estimates for the linearized system of \eqref{2.47_Sys_2n}:
\begin{equation}\label{operator form l}
\begin{cases}
\partial_t \tub  + \tl \tub
=0,
\\ \tub\mid_{t=0}=\tub_0,
\end{cases}
\end{equation}
where the linear operator $\tl$ is defined by
\begin{equation}\label{lll}
\tl \tub =\sum_{k=1}^{d} \ta^{k,\flat}(0)   \tub_{x_k}-\Theta^\flat \tub.
\end{equation}

The solution operator ${ e}^{-t\tl}$ of the system \eqref{operator form l} has the following time-decay property:
\begin{lem}\label{linear decay lemma}
Let $d\ge 2$.
For $1\le p\le2$, $\al\ge 0$ and denote $p^\ast=\min\{2,dp/(d-p)\}$. There exists a constant $\lambda > 0$ such that
\begin{equation}\label{linear decay 1}
\norm{\Lambda^\al  \pic { e}^{-t\tl} \tub_0 }_{L^2}\ls (1+t)^{-  \frac{d}{2}\left( \frac{1}{p}-\frac{1}{2}\right)- \frac{\alpha}{2}}
\left(\norm{\pic \tub_0}_{L^p}+\norm{\Pi^\mathtt{D}\tub_0}_{L^{p^\ast}} \right) + e^{-\lambda t} \norm{\Lambda^\al \tub_0 }_{L^2},
\end{equation}
\begin{equation}\label{linear decay 1'}
\norm{\Lambda^\al  \pic { e}^{-t\tl} \tub_0 }_{L^2}\ls (1+t)^{-  \frac{d}{2}\left( \frac{1}{p}-\frac{1}{2}\right)- \frac{\alpha}{2}}
\left(\norm{\pic \tub_0}_{L^p}+\norm{\Lambda\Pi^\mathtt{D}\tub_0}_{L^{p}} \right) + e^{-\lambda t} \norm{\Lambda^\al \tub_0 }_{L^2},
\end{equation}
and
\begin{equation}\label{linear decay 2}
\norm{\Lambda^\al   \Pi^\mathtt{D}{ e}^{-t\tl} \tub_0 }_{L^2}\ls  (1+t)^{-  \frac{d}{2}\left( \frac{1}{p}-\frac{1}{2}\right)- \frac{\alpha}{2}-\hal}
\left(\norm{\pic \tub_0}_{L^p}+\norm{\Pi^\mathtt{D}\tub_0}_{L^{p^\ast}} \right) + e^{-\lambda t} \norm{\Lambda^\al \tub_0 }_{L^2},
\end{equation}
\begin{equation}\label{linear decay 2'}
\norm{\Lambda^\al  \Pi^\mathtt{D}{ e}^{-t\tl} \tub_0 }_{L^2}\ls (1+t)^{-  \frac{d}{2}\left( \frac{1}{p}-\frac{1}{2}\right)- \frac{\alpha}{2}-\hal}
\left(\norm{\pic \tub_0}_{L^p}+\norm{\Lambda\Pi^\mathtt{D}\tub_0}_{L^{p}} \right) + e^{-\lambda t} \norm{\Lambda^\al \tub_0 }_{L^2},
\end{equation}
where $\pic $ and $\Pi^\mathtt{D}$ are the projection operators defined by
\begin{equation}
 \pic a^\flat=(a_{2},\dots,a_r)^T,\ \Pi^\mathtt{D}a^\flat=(a_{r+1},\dots,a_n)^T,\quad \text{for} \ a^\flat=(a_2,\dots,a_n)^T\in \mathbb{R}^{n-1}.
\end{equation}
\end{lem}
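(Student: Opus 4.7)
The plan is to Fourier-transform \eqref{operator form l} and derive a pointwise-in-$\xi$ decay estimate for $\widehat{\tub}$, which will then be converted into the claimed $L^p$--$L^2$ bounds by standard heat-kernel computations combined with Hausdorff--Young. Taking the Fourier transform of \eqref{operator form l} gives $\dt\widehat{\tub}(t,\xi)+\widehat{\tl}(\xi)\widehat{\tub}=0$ with symbol $\widehat{\tl}(\xi)=i\sum_{k=1}^d\xi_k\widetilde{A}^{k,\flat}(0)-\Theta^\flat$.

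For the basic pointwise decay I would construct the frequency-dependent Lyapunov functional
\begin{equation*}
\mathcal{E}(\xi,\widehat{\tub}):=\widehat{\tub}^{*}\,\widetilde{A}^{0,\flat}(0)\,\widehat{\tub}-\epsilon\,\frac{|\xi|}{1+|\xi|^2}\,\bigl(i\,\widehat{\tub}^{*}K(\xi/|\xi|)\widehat{\tub}\bigr),
\end{equation*}
which is equivalent to $|\widehat{\tub}|^2$ for $\epsilon>0$ small, by the positive-definiteness of $\widetilde{A}^{0,\flat}(0)$ from Lemma \ref{a0b}. Differentiating in time, the convective contribution to $\tfrac{\rmd}{\rmd t}(\widehat{\tub}^{*}\widetilde{A}^{0,\flat}(0)\widehat{\tub})$ vanishes thanks to the symmetry identity \eqref{3.2}; the source contribution produces $-c|\widehat{\pid\tub}|^2$ via \eqref{3.10}--\eqref{mm} of Lemma \ref{diss}; and the compensator piece, upon invoking the partial Kawashima estimate \eqref{3.14} weighted by $|\xi|/(1+|\xi|^2)$, furnishes $-c\rho(\xi)|\widehat{\tub}|^2$-dissipation on the $\pic$ direction up to a $|\widehat{\pid\tub}|^2$-defect absorbed by the source damping. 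With $\rho(\xi):=|\xi|^2/(1+|\xi|^2)$, one arrives at $\tfrac{\rmd}{\rmd t}\mathcal{E}(\xi,\widehat{\tub})+c_1|\widehat{\pid\tub}|^2+c_2\rho(\xi)|\widehat{\tub}|^2\le 0$, hence $|\widehat{\tub}(t,\xi)|^2\lesssim e^{-c\rho(\xi)t}|\widehat{\tub_0}(\xi)|^2$. A refined analysis of $\widehat{\tl}(\xi)$ at low frequency (which has a heat eigenvalue $\sim|\xi|^2$ primarily along $\pic$ and an exponential eigenvalue $\sim O(1)$ primarily along $\pid$, both modulo $O(|\xi|)$ perturbations, with the exponential decay of $e^{t\Theta^\mtd}$ guaranteed by Lemma \ref{diss} together with the invertibility of $\Theta^\mtd$ from \asmp{\ta 1}) then gives the channel-wise refinements
\begin{align*}
|\widehat{\pic\tub}(t,\xi)| &\lesssim e^{-c\rho(\xi)t/2}\bigl(|\widehat{\pic\tub_0}(\xi)|+|\xi|\,|\widehat{\pid\tub_0}(\xi)|\bigr)+e^{-\lambda t}|\widehat{\pid\tub_0}(\xi)|,\\
|\widehat{\pid\tub}(t,\xi)| &\lesssim e^{-c\rho(\xi)t/2}\bigl(|\xi|\,|\widehat{\pic\tub_0}(\xi)|+|\xi|^2\,|\widehat{\pid\tub_0}(\xi)|\bigr)+e^{-\lambda t}|\widehat{\pid\tub_0}(\xi)|.
\end{align*}

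With these pointwise estimates I would split $\rd_\xi=\{|\xi|\le 1\}\cup\{|\xi|>1\}$. On the high-frequency part, $\rho(\xi)\ge 1/2$ delivers the exponential remainder $e^{-\lambda t}\norm{\Lambda^\alpha\tub_0}_{L^2}$ via Plancherel. On the low-frequency part, $\rho(\xi)\sim|\xi|^2$, and the heat-type integral
\begin{equation*}
\int_{|\xi|\le 1}|\xi|^{2\alpha+2\sigma}\,e^{-c|\xi|^2 t}\,|\widehat{f}(\xi)|^2\,\rmd\xi
\end{equation*}
(with $\sigma=0,1,2$ according to the channel above) is handled by H\"older with exponents $r'=p'/2$, $1/r=1-2/p'$, together with Hausdorff--Young $\norm{\widehat{f}}_{L^{p'}}\le\norm{f}_{L^p}$, producing the rate $(1+t)^{-d(1/p-1/2)-\alpha-\sigma}\,\norm{f}_{L^p}^2$. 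The role of the $L^{p^\ast}$ norm on $\pid\tub_0$ is then transparent: the extra $|\xi|$-factor carried by $\widehat{\pid\tub_0}$ in each channel (as exhibited by the refined bounds above) is precisely balanced by the Sobolev-type identity $-d(1/p^\ast-1/2)-1=-d(1/p-1/2)$ with $p^\ast=dp/(d-p)$, so the final rate coming from $\pid\tub_0\in L^{p^\ast}$ matches the rate coming from $\pic\tub_0\in L^p$. The variant \eqref{linear decay 1'}--\eqref{linear decay 2'} absorbs the same $|\xi|$-factor into one extra spatial derivative on $\pid\tub_0$ and applies H\"older at the original exponent $p'$.

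The principal technical hurdle will be the compensator step of the Lyapunov argument: the partial Kawashima estimate \eqref{3.14} of Lemma \ref{kaw} provides strict positivity only modulo a defect on the $\pid$ direction, so closing the estimate requires choosing the weight $|\xi|/(1+|\xi|^2)$ together with the small parameter $\epsilon>0$ so that this defect is dominated uniformly in $\xi$ by the direct damping of Lemma \ref{diss}. The channel-wise refinements with $|\xi|$-powers $(0,1,1,2)$ require either a careful iteration of the Duhamel formula for the $\pid$-block equation or an explicit low-frequency spectral decomposition of $\widehat{\tl}(\xi)$, and the resulting bookkeeping is exactly what forces the asymmetric $L^p$-versus-$L^{p^\ast}$ integrability requirements through the Hardy--Littlewood--Sobolev-type relation $1/p^\ast=1/p-1/d$.
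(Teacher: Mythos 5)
Your proposal is correct in outline, and its key intermediate step coincides exactly with the paper's: the channel-wise pointwise Fourier bounds with $|\xi|$-powers $(0,1)$ for $\pic$ and $(1,2)$ for $\pid$, together with an exponentially decaying remainder, are precisely the bounds the paper writes down, and your low/high-frequency splitting with H\"older, Hausdorff--Young and the relation $1/p^\ast=1/p-1/d$ is exactly the paper's unexplained ``standard way'' of converting them into \eqref{linear decay 1}--\eqref{linear decay 2'}. The difference lies in where those pointwise bounds come from: the paper simply imports them from Section~4.2 of \cite{bianchini2007CPAM} after observing (as in Lemma~\ref{kaw}) that the linear system \eqref{operator form l} satisfies the Kawashima condition, whereas you propose to rederive them. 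Your compensated Lyapunov functional built from $\ta^{0,\flat}(0)$ (Lemmas \ref{a0b} and \ref{diss}) and the weighted compensator $\epsilon\,|\xi|(1+|\xi|^2)^{-1}K$ (Lemma \ref{kaw}, with the $\pid$-defect absorbed by the direct damping) correctly yields $|\widehat{\tub}(t,\xi)|\ls e^{-c\rho(\xi)t}|\widehat{\tub_0}(\xi)|$; this is the Fourier-side version of the physical-space computation the paper performs in Lemma \ref{enes3}. What produces the extra per-channel $|\xi|$-powers, however, is not this energy functional but the low-frequency spectral (eigenprojection) analysis of the symbol, which you correctly single out as the principal hurdle but only name rather than execute --- and that analysis is exactly the content of the cited part of \cite{bianchini2007CPAM}. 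So your route is self-contained in principle and more informative than a citation, at the cost of having to carry out the perturbative spectral decomposition (or, equivalently, a Duhamel iteration on the $\pid$-block exploiting the invertibility of $\Theta^{\mathtt{D}}$ from \asmp{\ta 1}); if the proof is meant to stand alone, that step must actually be written out.
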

\begin{proof}
These decay estimates follow by refining the initial condition in \cite{bianchini2007CPAM}. Indeed, since similar as in Lemma \ref{kaw} the linear system \eqref{operator form l} satisfies the Kawashima condition, from Section 4.2 of \cite{bianchini2007CPAM}, one can decompose the solution $\tub(t)$ of \eqref{operator form l} as
\begin{equation}
\tub(t)={e}^{-t\tl}\tub_0=K(t)\tub_0+\mathcal{K}(t)\tub_0,
\end{equation}
and the two components satisfy the following bounds:
\begin{equation}
\abs{\pic  \widehat{ K(t)\tub_0}  }\le C e^{-c|\xi|^2 t}\left(\abs{\pic \widehat{ \tub_0}}+|\xi|\abs{\Pi^\mathtt{D}\widehat{ \tub_0}}\right),
\end{equation}
\begin{equation}
\abs{\Pi^\mathtt{D} \widehat{ K(t)\tub_0}  }\le C e^{-c|\xi|^2 t}\left(|\xi|\abs{\pic \widehat{ \tub_0}}+|\xi|^2\abs{\Pi^\mathtt{D}\widehat{ \tub_0}}\right)
\end{equation}
and
\begin{equation}
\abs{\widehat{\mathcal{K}(t)\tub_0}}\le C e^{-ct}\abs{\widehat{ \tub_0}}
\end{equation}
for some constants $C,c>0$. Then the estimates \eqref{linear decay 1}--\eqref{linear decay 2'} follow in the standard way.
\end{proof}

\subsection{Nonlinear estimates}
In order to use the linear decay estimates of Lemma \ref{linear decay lemma},
one may rewrite the subsystem \eqref{2.47_Sys_2n} in the following perturbed operator form
\begin{equation}\label{operator form}
\partial_t \tub  + \tl \tub
=\tn,
\end{equation}
where the linear operator $\tl$ is defined by \eqref{lll}
and the nonlinear term
\begin{equation}\label{nnn}
\tn= -\sum_{k=1}^{d} \left(\ta^{k,\flat}(\tu)- \ta^{k,\flat}(0)  \right) \tub_{x_k}+\tq^\flat(\tu)-\Theta^\flat \tub  .
\end{equation}

For the convenience of the later analysis, one may record the following estimates of $\tn$.
\begin{lem}\label{non le}

The following estimates hold.

$(i)$ Under   \asmp{\twd 1} and that $d\ge 3$, for $  1\le s\le \ell-1,$
\begin{equation}  \label{nonn1}
\norm{\lds\tn}_{L^2} + \norm{\ldso\tn}_{L^2}
\ls\delta \left(\norm{\Lambda^{s+1} \tub }_{L^2}+\norm{\lds \tub }_{L^2}+\norm{\na \tub }_{L^\infty}+\norm{\tub }_{L^\infty}\right),
\end{equation}
for $1\le p<2,$
\begin{equation}\label{nonn55}
\norm{\tnc }_{L^p} \ls \delta\left(\norm{\Lambda^{d\left(1-\frac{1}{p}\right)+1} \tub }_{L^{2}}+\norm{\Lambda^{d\left(1-\frac{1}{p}\right)}\tud }_{L^{2}}+\norm{\tub }_{L^\infty}\right)+\norm{\Lambda^{\frac{d}{2}\left(1-\frac{1}{p}\right) }\tub }_{L^{2}}^2,
\end{equation}
for $1\le p<2d/(d+2)$,
\begin{equation}\label{nonn6}
\norm{\tn }_{L^\frac{dp}{d-p}}\ls \delta\left(\norm{\Lambda^{d\left(1-\frac{1}{p}\right)+2}\tub }_{L^2}+\norm{ \Lambda^{d\left(1-\frac{1}{p}\right)+1} \tub }_{L^2}\right),
\end{equation}
and
\begin{equation}\label{121}
\norm{\tn }_{L^2}  \ls \delta\left(\norm{\na \tub }_{L^{\infty}}+\norm{\tub }_{L^\infty}\right).
\end{equation}

$(ii)$ Under   \asmp{\td 1}, for $  0\le s\le \ell-1,$
\begin{equation} \label{nonn11}
\begin{split}
\norm{\lds\tn}_{L^2}  \ls&\delta \left(\norm{\Lambda^{s+1} \tub }_{L^2}+\norm{\lds \tud }_{L^2}+\norm{\na \tub }_{L^\infty}+\norm{\tud }_{L^\infty}\right)
\\&+ \norm{\tub }_{L^\infty}\left(\norm{\Lambda^{s } \tub }_{L^2}+\norm{\tub }_{L^\infty}\right) ,
\end{split}
\end{equation}
for $1\le p<2,$
\begin{equation}\label{nn5'}
\norm{\tnc }_{L^p} \ls \delta\left(\norm{\Lambda^{d\left(1-\frac{1}{p}\right)+1} \tub }_{L^{2}}+\norm{\Lambda^{d\left(1-\frac{1}{p}\right)}\tud }_{L^{2}}\right)+\norm{\Lambda^{d\left(1-\frac{1}{p}\right)}\tub }_{L^{2}}\norm{\tub }_{L^\infty},
\end{equation}
and
\begin{equation} \label{nn5}
\norm{\tnc }_{L^2} \ls \delta\left( \norm{\na \tub }_{L^{\infty}}+\norm{\tud }_{L^\infty}
\right)+  \norm{\tub }_{L^\infty}^2,
\end{equation}
for $1\le p<2d/(d+2)$,
\begin{equation}  \label{nonn61}
\norm{\tn }_{L^\frac{dp}{d-p}} \ls \delta\left(\norm{\Lambda^{d\left(1-\frac{1}{p}\right)+2}\tub }_{L^2}+\norm{ \Lambda^{d\left(1-\frac{1}{p}\right)+1} \tud }_{L^2} \right) +\norm{ \Lambda^{\frac{d}{2} \left(1-\frac{1}{p}\right)+ \frac{1}{2}} \tub }_{L^2}^2,
 \end{equation}
and
\begin{equation}\label{nonn612}
 \norm{\tn }_{L^2} \ls \delta\left(\norm{\na \tub }_{L^{\infty}}+\norm{\tud }_{L^{\infty}} \right) + \norm{\tub }_{L^{2}} \norm{\tub }_{L^{\infty}},
\end{equation}
for $2d/(d+2)< p< 2$,
 \begin{equation}\label{nonn6123}
 \begin{split}
\norm{\Lambda\tn }_{L^p} \ls &\delta\left(\norm{\Lambda^{\frac{d}{2} + 1} \tub }_{L^{2}}+\norm{\Lambda^{d\left(1-\frac{1}{p}\right)+1} \tub }_{L^{2}}+\norm{\Lambda^{d\left(1-\frac{1}{p}\right)}\tud }_{L^{2}} \right)\\
&+ \norm{\La^{d(1-\frac{1}{p})} \tub }_{L^2} \norm{\tub}_{L^\infty} ,
\end{split}
\end{equation}
and
\begin{equation}\label{nonn61235}
\norm{\Lambda\tn }_{L^2} \ls \delta\left(\norm{\Lambda^{\frac{d}{2}+1} \tub }_{L^{2}}+\norm{ \na \tub }_{L^{\infty}}+\norm{ \tud }_{L^\infty} \right)+ \norm{\tub}_{L^\infty}^2.
\end{equation}

$(iii)$ Moreover, under   \asmp{\td 1} and that $\norm{\tvv_1}_{L^{q}} \ls \delta$ with $1\le p\le q\le 2$, for $1\le p<2,$
\begin{equation} \label{7.30+}
\begin{split}
\norm{\tnc}_{L^p} & \ls \delta \left( \norm{\La^{d( \frac{1}{2} + \frac{1}{q} - \frac{1}{p} )+1} \tub }_{L^2} + \norm{\La^{d( \frac{1}{2} + \frac{1}{q} -\frac{1}{p} )} \tud }_{L^2} \right) \\
& \quad + \norm{\La^{d(1-\frac{1}{p})} \tub}_{L^2} \left(\norm{\tub}_{L^\infty} + \norm{\tud}_{L^2} + \norm{\La \tub}_{L^2} \right) ,\textrm{ if }p<q;
\end{split}
\end{equation}
\begin{equation} \label{7.300+}
\begin{split}
\norm{\tnc}_{L^p} & \ls \delta \left( \norm{\na  \tub }_{L^\infty} + \norm{ \tud }_{L^\infty} \right) \\
& \quad + \norm{\La^{d(1-\frac{1}{p})} \tub}_{L^2} \left(\norm{\tub}_{L^\infty} + \norm{\tud}_{L^2} + \norm{\La \tub}_{L^2} \right),\textrm{ if }p=q,
\end{split}
\end{equation}
and
for $2d/(d+2)< p< 2$,
\begin{align}  \nonumber
\norm{\La \tn}_{L^p} & \ls \delta \left( \norm{\La^{\frac{d}{2}+1} \tub}_{L^2} + \norm{\La^{d( \hal + \frac{1}{q} - \frac{1}{p} ) +1} \tub}_{L^2} + \norm{\La^{d( \hal + \frac{1}{q} -\frac{1}{p} )} \tud}_{L^2}+ \norm{\La^{d(1-\frac{1}{p})+1} \tud }_{L^2} \right)\\
&\quad + \norm{\La^{d(1-\frac{1}{p})}\tub}_{L^2}    \norm{\tub}_{L^\infty}+\norm{\La^{d(1-\frac{1}{p})} \tub}_{H^1}\norm{\Lambda \tub}_{L^2}
 ,\textrm{ if }p<q;\label{7.322+}
\end{align}
\begin{equation} \label{7.32+}
\begin{split}
\norm{\La \tn}_{L^p} & \ls \delta \left( \norm{\La^{\frac{d}{2}+1} \tub}_{L^2} + \norm{\na  \tub }_{L^\infty} + \norm{ \tud }_{L^\infty}+ \norm{\La^{d(1-\frac{1}{p})+1} \tud }_{L^2} \right)\\
&\quad + \norm{\La^{d(1-\frac{1}{p})}\tub}_{L^2}    \norm{\tub}_{L^\infty}+\norm{\La^{d(1-\frac{1}{p})} \tub}_{H^1}\norm{\Lambda \tub}_{L^2}
,\textrm{ if }p=q.
\end{split}
\end{equation}
\end{lem}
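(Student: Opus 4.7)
The plan is to write $\tn = \tn_1 + \tn_2$ with
\[
\tn_1 = -\sum_{k=1}^d \bigl(\ta^{k,\flat}(\tu)-\ta^{k,\flat}(0)\bigr)\tub_{x_k},\qquad \tn_2 = \tq^\flat(\tu)-\Theta^\flat\tub,
\]
and to treat the two pieces separately. For $\tn_1$, a Taylor expansion gives $\ta^{k,\flat}(\tu)-\ta^{k,\flat}(0) = O(1)\tu$, so $\tn_1$ has the generic form $O(1)\tu\cdot\nabla\tub$. For $\tn_2$, I invoke the structural expansions derived in Section~\ref{sec:4}: under \asmp{\twd 1} the formulas \eqref{for decay1}--\eqref{for decay2} give $\tn_2^\mathtt{C} = O(1)\tub\,\tu$ (here a factor of either $\tu_1$ or $\tub$ is always present, but no pure $\tu_1^k$ block) and $\tn_2^\mathtt{D} = O(1)\tub\,\tu$; while under \asmp{\td 1} the sharper expansions \eqref{eexx4}--\eqref{eexx44} yield $\tn_2^\mathtt{C} = O(1)\tud\,\tu + O(1)\tub\,\tub\,\tu$ and $\tn_2^\mathtt{D}-\Theta^\mathtt{D}\tud = O(1)\tud\,\tu_1 + O(1)\tub\,\tub$. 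This structural bookkeeping is the main source of the two cases $(i)$ and $(ii)$ of the lemma.

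For the $\lds L^2$ estimates \eqref{nonn1} and \eqref{nonn11}, I would apply the product estimate \eqref{product estimate} of Lemma~\ref{A2}, distributing derivatives onto the factor of lowest regularity and bounding the remaining factor in $L^\infty$, then using the a~priori smallness $\norm{\tu}_{H^\ell}\ls\delta$ together with $\ell>d/2+1$ so that $\norm{\tu}_{L^\infty}+\norm{\nabla\tu}_{L^\infty}\ls\delta$. The commutator with $\ta^{k,\flat}(\tu)-\ta^{k,\flat}(0)$ produces a derivative either on the coefficient (giving $\nabla\tub$ in $L^\infty$) or on $\tub_{x_k}$ (giving $\Lambda^{s+1}\tub$ in $L^2$), explaining why both $\Lambda^{s+1}\tub$ and $\lds\tub$ appear on the right-hand side.

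For the $L^p$ estimates with $1\le p<2$ (i.e.\ \eqref{nonn55}, \eqref{nonn6}, \eqref{nn5'}, \eqref{nonn61}, \eqref{nonn6123}, \eqref{nonn61235}), the plan is to apply Hölder's inequality $\pnorm{fg}{p}\le\pnorm{f}{2}\pnorm{g}{\frac{2p}{2-p}}$ (or a suitable trilinear variant) to each quadratic/cubic piece, and then convert the $L^{2p/(2-p)}$ norm into the fractional Sobolev norm $\norm{\lds\cdot}_{L^2}$ via Gagliardo--Nirenberg with $s=d(1-\frac{1}{p})/2$, $d(1-\frac{1}{p})$, or $d(1-\frac{1}{p})+1$ depending on whether one needs an $L^p$ bound of a product of two factors or of $\Lambda\tn$ with three factors. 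The factor absorbed into the $\delta$ prefactor is always either $\tu$, $\tub$, or $\nabla\tub$ measured in $L^\infty$ (or at worst $L^{\frac{2d}{d-2}}$, which the $H^\ell$ a~priori bound controls by $\delta$); the remaining factor produces the Sobolev-norm-squared (or cubic) remainder that cannot be absorbed by smallness. The main bookkeeping obstacle is tracking which factor is placed in which Lebesgue space so that the resulting fractional index lands exactly at $d(1-\frac{1}{p})$, $d(1-\frac{1}{p})+1$, etc., as claimed.

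For part $(iii)$ under the extra assumption $\norm{\tvv_1}_{L^q}\ls\delta$, I would exploit the identity \eqref{6+.4}, namely $\tu_1 = \tvv_1-\sum_{j\ge 2}\tll_{1j}(\tu)\tu_j$, to control $\tu_1$ in $L^q$ by $\norm{\tvv_1}_{L^q}+\norm{\tub}_{L^q}$; the last norm is further interpolated. In each quadratic/cubic piece of $\tn_2^\mathtt{C}$, whenever a $\tu_1$-factor is present (which happens in every nonlinear term produced by \asmp{\td 1}), I place that factor in $L^q$ and the companion factor $\tub$ (or $\tud$) in an $L^r$ space with $1/p = 1/q + 1/r$. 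Then Gagliardo--Nirenberg turns the $L^r$ norm into $\norm{\Lambda^{d(\frac{1}{2}+\frac{1}{q}-\frac{1}{p})}\cdot}_{L^2}$, which matches the indices in \eqref{7.30+}--\eqref{7.32+}. When $p=q$ there is no room to upgrade the embedding, so $\tub$ must be taken in $L^\infty$, yielding the special form \eqref{7.300+}, \eqref{7.32+}. The hardest bookkeeping step here is the cubic term $\tub\tub\tu$ in $\tn_2^\mathtt{C}$ and the quadratic $\tud\,\tu_1$ piece in $\tn_2^\mathtt{D}-\Theta^\mathtt{D}\tud$, which must be split so that the $\tu_1$-factor receives the $L^q$ norm while the remaining factors are interpolated; this is precisely where the extra terms $\norm{\La^{d(1-\frac{1}{p})}\tub}_{L^2}\norm{\tub}_{L^\infty}$ and $\norm{\La^{d(1-\frac{1}{p})}\tub}_{H^1}\norm{\Lambda\tub}_{L^2}$ arise.
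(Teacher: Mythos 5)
Your proposal follows essentially the same route as the paper: the same split of $\tn$ into the convection part and $\tq^\flat-\Theta^\flat\tub$, the same structural expansions \eqref{for decay1}--\eqref{for decay2} and \eqref{eexx4}--\eqref{eexx44} to classify the quadratic/cubic blocks, the product estimates of Lemma \ref{A2} plus H\"older and Gagliardo--Nirenberg for the $L^p$ bounds, and for part $(iii)$ the same substitution of $\tu_1$ by $\tvv_1$ via \eqref{6+.4} with the $\tvv_1$-factor placed in $L^q$ and the companion in $L^{qp/(q-p)}$. The index bookkeeping you describe matches the paper's, so the plan is correct as stated.
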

\begin{proof}
One may first prove $(i)$ under \asmp{\twd 1} and that $d\ge 3$. In this case, recalling from the definition \eqref{nnn} of $\tn$ and the expansion \eqref{for decay1}--\eqref{for decay2}, one has
\begin{equation}
 \tnc=\ty(\tu)(\tu \na\tub+\tud \tu +\tub\tub )+\tz(\tu) \tub\tu\tu  ,
\end{equation}
and
\begin{equation}
 \tn=\ty(\tu)(\tu \na\tub+ \tub\tu).
\end{equation}
By the product estimates \eqref{product estimate} of Lemma \ref{A2}, it holds that for $  0\le s\le \ell-1,$
\begin{equation}\label{decay ee3}
\begin{split}
 \norm{\lds\tn}_{L^2}   &\ls  \norm{\ty(\tu)\tu }_{L^\infty}\left(\norm{\Lambda^{s+1} \tub }_{L^2}+\norm{\lds \tub  }_{L^2}\right)
 \\&\quad+\norm{\lds(\ty(\tu) \tu)  }_{L^2}\left(\norm{\na \tub }_{L^\infty}+\norm{\tub }_{L^\infty}\right)
 \\&\ls\delta \left(\norm{\Lambda^{s+1} \tub }_{L^2}+\norm{\lds \tub }_{L^2}+\norm{\na \tub }_{L^\infty}+\norm{\tub }_{L^\infty}\right)
 \end{split}
\end{equation}
and that for $  1\le s\le \ell-1,$ since $d\ge 3$,
\begin{equation}\label{decay ee333}
\begin{split}
 \norm{\Lambda^{s-1}\tn}_{L^2}   &\ls  \norm{\ty(\tu)\tu }_{L^d}\left(\norm{\Lambda^{s} \tub }_{L^\frac{2d}{d-2}}+\norm{\Lambda^{s-1} \tub }_{L^\frac{2d}{d-2}}\right) \\&\quad+\norm{\La^{s-1}(\ty(\tu) \tu ) }_{L^2}\left(\norm{\na \tub }_{L^\infty}+\norm{\tub }_{L^\infty}\right)
 \\&\ls\delta \left(\norm{\Lambda^{s+1} \tub }_{L^2}+\norm{\lds \tub }_{L^2}+\norm{\na \tub }_{L^\infty}+\norm{\tub }_{L^\infty}\right).
 \end{split}
\end{equation}
For $1\le p<2$, by H\"older's and Sobolev's  inequalities, one can get
\begin{equation}\label{decay ee1}
\begin{split}
\norm{\tnc }_{L^p}&\ls \norm{\ty(\tu)\tu }_{L^2}\left(\norm{\na \tub }_{L^{\frac{2p}{2-p}}}+\norm{\tud }_{L^\frac{2p}{2-p}}\right)
\\&\quad+\norm{\ty(\tu) }_{L^\infty}\norm{\tub }_{L^{2p}}^2+\norm{\tz(\tu)\tu }_{L^{2 }}\norm{\tu }_{L^\frac{2p}{2-p}}\norm{\tub }_{L^\infty}
\\&\ls \norm{\tub }_{L^{2p}}^2+\delta\left(\norm{\na \tub }_{L^{\frac{2p}{2-p}}}+\norm{\tud }_{L^{\frac{2p}{2-p}}}+\norm{\tub }_{L^\infty}\right)
\\&\ls \norm{\Lambda^{\frac{d}{2}\left(1-\frac{1}{p}\right)}\tub }_{L^{2}}^2+\delta\left(\norm{\Lambda^{d\left(1-\frac{1}{p}\right)}\tud }_{L^{2}}+\norm{\Lambda^{d\left(1-\frac{1}{p}\right)+1} \tub }_{L^{2}}+\norm{\tub }_{L^\infty}\right) .
\end{split}
\end{equation}
For $1\le p<2d/(d+2)$ and hence ${dp}/({d-p})<2$, $d(1-1/p) + 2 < d/2+1$, it holds that
\begin{equation}\label{decay ee11122}
\begin{split}
 \norm{\tn }_{L^\frac{dp}{d-p}}& \ls \norm{\ty(\tu)\tu }_{L^2}\left(\norm{\na \tub }_{L^\frac{dp}{d-p-{dp}/{2}}}+\norm{  \tub }_{L^\frac{dp}{d-p-{dp}/{2}}}\right)
\\&\ls \delta\left(\norm{\Lambda^{d\left(1-\frac{1}{p}\right)+2}\tub }_{L^2}+\norm{ \Lambda^{d\left(1-\frac{1}{p}\right)+1} \tub }_{L^2}\right) .
\end{split}
 \end{equation}
Thus, \eqref{decay ee3}--\eqref{decay ee11122} yield \eqref{nonn1}--\eqref{nonn6} correspondingly. On the other hand, \eqref{121} can be proved with the simple modification in \eqref{decay ee11122}.

Next, one may turn to prove $(ii)$ under \asmp{\td 1}. In this case, by the expansions \eqref{eexx4}--\eqref{eexx44}, the nonlinear terms can be expressed as
\begin{equation} \label{8.42}
 \tnc=\ty(\tu)(\tu \na\tub+\tud \tu )+\tz(\tu) \tub\tub\tu
\end{equation}
and
\begin{equation} \label{8.43}
 \tn=\ty(\tu)(\tu \na\tub+\tud \tu+\tub\tub).
\end{equation}
By the product estimates \eqref{product estimate} of Lemma \ref{A2}, it holds that for $  0\le s\le \ell-1,$
\begin{equation}\label{decay ee322'}
\begin{split}
\norm{\lds\tn}_{L^2}   &\ls  \norm{\ty(\tu)\tu }_{L^\infty}\left(\norm{\Lambda^{s+1} \tub }_{L^2}+\norm{\lds \tud }_{L^2}\right)\\
&\quad+\norm{\lds (\ty(\tu)\tu)  }_{L^2}\left(\norm{\na \tub }_{L^\infty}+\norm{\tud }_{L^\infty}\right)
\\&\quad+ \norm{ \ty(\tu) }_{L^\infty} \norm{\lds\tub }_{L^2} \norm{\tub}_{L^\infty} + \norm{\lds(\ty(\tu)-\ty(0))}_{L^2}\norm{\tub }_{L^\infty}^2
\\&\ls\delta \left(\norm{\Lambda^{s+1} \tub }_{L^2}+\norm{\lds \tud }_{L^2}+\norm{\na \tub }_{L^\infty}+\norm{\tud }_{L^\infty}\right)
\\&\quad+ \norm{\Lambda^{s } \tub }_{L^2}\norm{\tub }_{L^\infty}+ \delta \norm{\tub }_{L^\infty}^2.
 \end{split}
\end{equation}
For $1\le p<2$, by H\"older's and Sobolev's  inequalities, one can get
\begin{equation}\label{decay ee1'}
\begin{split}
\norm{\tnc }_{L^p}&\ls \norm{\ty(\tu)\tu }_{L^2}\left(\norm{\na \tub }_{L^{\frac{2p}{2-p}}}+\norm{\tud }_{L^\frac{2p}{2-p}}\right)
+\norm{\tz(\tu)\tu }_{L^{2 }}\norm{  \tub }_{L^{\frac{2p}{2-p}}}\norm{\tub }_{L^\infty}
\\&\ls  \delta\left(\norm{\na \tub }_{L^{\frac{2p}{2-p}}}+\norm{\tud }_{L^{\frac{2p}{2-p}}}+\norm{  \tub }_{L^{\frac{2p}{2-p}}}\norm{\tub }_{L^\infty}\right)
\\&\ls \delta\left(\norm{\Lambda^{d\left(1-\frac{1}{p}\right)+1} \tub }_{L^{2}}+\norm{\Lambda^{d\left(1-\frac{1}{p}\right)}\tud }_{L^{2}}+\norm{\Lambda^{d\left(1-\frac{1}{p}\right)}\tub }_{L^{2}}\norm{\tub }_{L^\infty}\right).
\end{split}
\end{equation}
For $1\le p<2d/(d+2)$, it holds that
\begin{equation}\label{decay ee11122'}
\begin{split}
\norm{\tn }_{L^\frac{dp}{d-p}}  & \ls \norm{\ty(\tu)\tu }_{L^2}\left(\norm{\na \tub }_{L^\frac{dp}{d-p-{dp}/{2}}}+\norm{  \tud }_{L^\frac{dp}{d-p-{dp}/{2}}} \right) +\norm{  \tub }_{L^\frac{2dp}{d-p}}^2
\\&\ls \delta\left(\norm{\Lambda^{d\left(1-\frac{1}{p}\right)+2}\tub }_{L^2}+\norm{ \Lambda^{d\left(1-\frac{1}{p}\right)+1} \tud }_{L^2} \right) +\norm{ \Lambda^{\frac{d}{2} \left(1-\frac{1}{p}\right)+\hal} \tub }_{L^2}^2.
\end{split}
\end{equation}
For $ 2d/(d+2)\le p< 2$, by the product estimates \eqref{product estimate} of Lemma \ref{A2}, it holds that
\begin{equation}\label{decay ee1'1}
\begin{split}
\norm{\Lambda\tn }_{L^p}&\ls \norm{\ty(\tu)\tu }_{L^{\frac{dp}{d-p}}} \norm{ \na^2 \tub }_{L^d} +
\norm{\ty(\tu)\tu }_{L^2} \norm{\na\tud }_{L^\frac{2p}{2-p}}
\\&\quad+\norm{\na(\ty(\tu)\tu) }_{L^2}\left(\norm{ \na \tub }_{L^{\frac{2p}{2-p}}}+\norm{ \tud }_{L^\frac{2p}{2-p}}\right)
\\
& \quad + \norm{ \na \tub }_{L^{\frac{2p}{2-p}}}\norm{\tub }_{L^2}+ \norm{\na \ty(\tu) }_{L^2} \norm{\tub}_{L^{\frac{2p}{2-p}}}\norm{\tub}_{L^\infty}
\\&\ls \delta\left(\norm{\Lambda^{\frac{d}{2}+1} \tub }_{L^{2}}+\norm{\Lambda^{d\left(1-\frac{1}{p}\right)+1} \tub }_{L^{2}} + \norm{\Lambda^{d\left(1-\frac{1}{p}\right)}\tud }_{L^{2}} \right)\\&\quad+\delta\norm{\La^{d(1-\frac{1}{p})} \tub }_{L^2} \norm{\tub}_{L^\infty}.
\end{split}
\end{equation}
Thus, \eqref{decay ee322'}--\eqref{decay ee1'1} yields \eqref{nonn11}--\eqref{nn5'}, \eqref{nonn61}, \eqref{nonn6123}, respectively. On the other hand, \eqref{nn5}, \eqref{nonn612}, \eqref{nonn61235} can be proved with the simple modifications in \eqref{decay ee1'}--\eqref{decay ee1'1}, respectively.

Finally, one proves $(iii)$ under \asmp{\td 1} and $\norm{\tvv_1}_{L^{q}} \ls \delta$ with $1\le p\le q\le 2$.
By  \eqref{6+.4}, the expansions \eqref{8.42}--\eqref{8.43} can be rewritten as
\begin{equation} \label{8.42'}
 \tnc=\ty(\tu)\tvv_1( \na\tub+\tud   )+\ty(\tu)\tub( \na\tub+\tud)+\tz(\tu) \tub\tub\tu
\end{equation}
and
\begin{equation} \label{8.43'}
 \tn =\ty(\tu)\tvv_1( \na\tub+\tud   )+\ty(\tu)\tub( \na\tub+\tub).
\end{equation}
One may first assume $p<q$. For $1\le p< 2$, it holds that
\begin{equation}\label{t11}
\begin{split}
\norm{\tnc}_{L^p} & \ls \norm{\ty(\tu) \tvv_1}_{L^{q}} \left( \norm{\nabla \tub}_{L^{\frac{q p}{q-p}}} + \norm{\tud}_{L^{\frac{q p}{q-p}}} \right) \\
& \quad + \norm{\ty(\tu) \tub}_{L^{\frac{2p}{2-p}}} \left( \norm{\nabla \tub}_{L^2} + \norm{\tud}_{L^2} \right) + \norm{\tz (\tu) \tu}_{L^2} \norm{\tub}_{L^{\frac{2p}{2-p}}} \norm{\tub}_{L^\infty} \\
& \ls \delta \left( \norm{\La^{ d(\hal + \frac{1}{q} -\frac{1}{p}) +1 }\tub}_{L^2} + \norm{\La^{d( \hal + \frac{1}{q} - \frac{1}{p} )} \tud}_{L^2} \right)  \\
&\quad + \norm{\La^{d(1-\frac{1}{p})} \tub}_{L^2} \left( \norm{\La \tub}_{L^2} + \norm{\tud}_{L^2} + \delta \norm{\tub}_{L^\infty} \right).
\end{split}
\end{equation}
For $2d/(d+2)\le p< 2$, it holds that
\begin{align}\label{t12}
\norm{\La \tn}_{L^p} & \ls \norm{\nabla(\ty(\tu)\tvv_1)}_{L^{q}} \left(\norm{\nabla \tub}_{L^{\frac{q p}{q-p}}} +   \norm{\tud}_{L^{\frac{q p}{q-p}}} \right) \nonumber\\& \quad + \norm{\ty(\tu) \tu}_{L^{\frac{dp}{d-p}}} \norm{\na^2 \tub}_{L^d} + \norm{\ty(\tu) \tu}_{L^2} \norm{\na \tud}_{L^{\frac{2p}{2-p}}} \nonumber\\ & \quad + \norm{\ty(\tu)}_{L^\infty} \left( \norm{\na \tub}_{L^2} \norm{\na \tub}_{L^\frac{2p}{2-p}} + \norm{\na \tub}_{L^2}\norm{\tub}_{L^{\frac{2p}{2-p}}}\right)\nonumber \\
& \quad+ \norm{\na   \ty(\tu)  }_{L^\infty}\norm{\tub}_{L^{\frac{2p}{2-p}}} \norm{\nabla \tub}_{L^2} +   \norm{\na   \ty(\tu)  }_{L^2}\norm{\tub}_{L^{\frac{2p}{2-p}}}   \norm{\tub}_{L^\infty}
\\&\ls  \delta \left( \norm{\La^{\frac{d}{2}+1} \tub}_{L^2} + \norm{\La^{d( \hal + \frac{1}{q} - \frac{1}{p} ) +1} \tub}_{L^2} + \norm{\La^{d( \hal + \frac{1}{q} -\frac{1}{p} )} \tud}_{L^2}+ \norm{\La^{d(1-\frac{1}{p})+1} \tud }_{L^2} \right)\nonumber\\
&\quad+\norm{\La^{d(1-\frac{1}{p})+1} \tub}_{L^2}\norm{\na \tub}_{L^2}+ \norm{\La^{d(1-\frac{1}{p})} \tub}_{L^2}\left( \norm{\na \tub}_{L^2}+\norm{\tub}_{L^\infty}\right). \nonumber
\end{align}
Here the following bound is used
\begin{equation}
\norm{\nabla \tvv_1}_{L^{q}} \ls \norm{\tvv_1}_{L^{q}} + \norm{\nabla\tvv_1}_{L^2} \ls \delta,
\end{equation}
which follows from an use of Lemma \ref{A3}.
Thus, \eqref{t11}--\eqref{t12} yield \eqref{7.30+}, \eqref{7.322+}, respectively. On the other hand, \eqref{7.300+}, \eqref{7.32+} can be proved with the simple modifications in \eqref{t11}--\eqref{t12}, respectively.
\end{proof}

\subsection{Nonlinear decay estimates}

Now one can prove the decay estimates of $\tub$:
\begin{prop}\label{decay decay}
The following decay estimates hold.

$(i)$ Under   \asmp{\twd 1} and that $d \geq 3$ and $1\le p\le 2,\ p< d/2 $,
then for all $s\in [ 0, s_1^\ast]\cap [0,d/2)$ with $s_1^*=   d(1-1/p) + 1 $,
\begin{equation}\label{u11}
\norm{\lds\tuc(t) }_{H^{\ell-s}}\ls \ttk (1+t)^{-\frac{d}{2}\left(\frac{1}{p}-\frac{1}{2}\right)-\frac{s}{2} },
\end{equation}
and for all $s\in [ 0, s_1^\ast-1]\cap [0,d/2-1)$,
\begin{equation}\label{u12}
\norm{\lds\tud(t) }_{H^{\ell-s}}\ls \ttk (1+t)^{-\frac{d}{2}\left(\frac{1}{p}-\frac{1}{2}\right)-\frac{s}{2}-\frac{1}{2} }.
\end{equation}

$(ii)$ Under   \asmp{\td 1} and that $d \geq 3$ and $1\le p\le 2,\ p< d$; requiring further that $p < 2d/(2(3-\ell)+d) $ if $\ell\le 3$,
then for all $s\in [ 0, s_2^\ast]\cap [0,d/2+1)$ with $s_2^*= \min\{ d(1-1/p) + 1,\ell-1\} $,
\begin{equation}\label{u1112}
\norm{\lds\tuc(t) }_{H^{\ell-s}}\ls \ttk (1+t)^{-\frac{d}{2}\left(\frac{1}{p}-\frac{1}{2}\right)-\frac{s}{2} },
\end{equation}
and for all $s\in [ 0, s_2^\ast-1]\cap [0,d/2)$,
\begin{equation}\label{u1212}
\norm{\lds\tud(t) }_{H^{\ell-s}}\ls \ttk (1+t)^{-\frac{d}{2}\left(\frac{1}{p}-\frac{1}{2}\right)-\frac{s}{2}-\frac{1}{2} }.
\end{equation}

$(iii)$ Under   \asmp{\td 1}, \asmp{\td 3} and that $d \geq 2$ and $\norm{\tvv_1}_{L^{q}} \ls \delta$ with $1\le p\le q\le 2,\ q< d $; requiring further that $p < 2d/(2(3-\ell)+d) $ if $\ell\le 3$,  then for all $s\in [0,s_3^*] \cap [0,d/2+1)$ with $s_3^* = \min \{ d(1/2 + 1/q - 1/p) + 1, d(1-1/p) +2, \ell-1 \}$,
\begin{equation} \label{7.50+}
\norm{\La^s \tuc(t)}_{H^{\ell-s}} \ls \ttk (1+t)^{-\frac{d}{2}(\frac{1}{p}-\hal) -\frac{s}{2}},
\end{equation}
and for all $s \in [0,s_3^*-1] \cap [0,d/2)$,
\begin{equation} \label{7.51+}
\norm{\La^s \tud(t)}_{H^{\ell-s}} \ls \ttk (1+t)^{-\frac{d}{2}(\frac{1}{p}-\hal) -\frac{s}{2}-\hal}.
\end{equation}
Here $K_0 =\norm{u_0}_{H^\ell}+\norm{   \tuc_0 }_{L^{p }}+\norm{\tud_0 }_{L^{p^\ast}}$ is sufficiently small.
\end{prop}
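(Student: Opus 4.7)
\bigskip

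\noindent\textbf{Proof proposal.} The plan is to combine four ingredients in a bootstrap/Duhamel loop: the scaled high-order dissipation of Lemma \ref{good}, the projected linear decay of Lemma \ref{linear decay lemma} applied to the perturbed form \eqref{operator form}, the nonlinear estimates of Lemma \ref{non le}, and (for part $(iii)$) the $L^q$ bound of Proposition \ref{prop:6+.1}. For each of the three parts I would introduce a time-weighted functional that packages the conjectured rates, for instance in part $(i)$
\[
M_1(t)=\sup_{0\le\tau\le t}\Big\{\sum_{s\in\mathcal{S}_\mathtt{C}} (1+\tau)^{\frac{d}{2}(\frac{1}{p}-\frac{1}{2})+\frac{s}{2}}\norm{\Lambda^s\tuc(\tau)}_{H^{\ell-s}}+\sum_{s\in\mathcal{S}_\mathtt{D}} (1+\tau)^{\frac{d}{2}(\frac{1}{p}-\frac{1}{2})+\frac{s+1}{2}}\norm{\Lambda^s\tud(\tau)}_{H^{\ell-s}}\Big\},
\]
and show $M_1(t)\ls K_0+M_1(t)^2$ on the time of existence, so that smallness of $K_0$ closes the estimate.

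First, I would peel off the high-order part. By Lemma \ref{good}(i) the decay of $\norm{\Lambda^s\tub}_{H^{\ell-s}}$ for $1\le s<d/2$ is controlled, up to an exponentially decaying initial term, by a time integral of $\norm{\Lambda^s\tuc}_{L^2}^2$; an elementary Gronwall-type interpolation of $e^{-\lambda(t-\tau)}$ against a polynomial decay reduces the matter to bounding $\norm{\Lambda^s\tuc(t)}_{L^2}$ alone. For part $(ii)$ the same reduction via Lemma \ref{good}(ii) also absorbs the extra $\delta\norm{\Lambda^{s-1}\tud}_{L^2}^2$ and $\delta\norm{\tub}_{L^\infty}^4$ into the bootstrap since those terms already decay faster than the target rate by a factor we can spare. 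Hence we reduce to proving the claimed decay of the lowest-order quantities $\norm{\tuc}_{L^2}$ and $\norm{\tud}_{L^2}$ (with the fractional variants $\La^s$ when $s$ exceeds $\ell-1$), which in turn we obtain via Duhamel on \eqref{operator form}:
\[
\tub(t)=e^{-t\tl}\tub_0+\int_0^t e^{-(t-\tau)\tl}\tn(\tau)\,\rmd\tau.
\]

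For the linear piece, Lemma \ref{linear decay lemma} gives exactly the conjectured rates from the hypothesis $\norm{\tuc_0}_{L^p}+\norm{\tud_0}_{L^{p^*}}\ls K_0$ (using \eqref{linear decay 1}, \eqref{linear decay 2} for integer $\alpha$ and the variants \eqref{linear decay 1'}, \eqref{linear decay 2'} together with $\norm{\La\tud_0}_{L^p}\ls \norm{\tud_0}_{L^{p^*}}^{\theta}\norm{\La^{\ell}\tud_0}_{L^2}^{1-\theta}$ when the direct hypothesis is not available). For the nonlinear piece, I would project $e^{-(t-\tau)\tl}\tn$ onto $\mtc$ and $\mtd$ components and use Lemma \ref{linear decay lemma} with $\tub_0$ replaced by $\tn(\tau)$, splitting the integral at $t/2$: on $[0,t/2]$ we keep the linear decay factor $(1+t-\tau)^{-\cdots}$ outside and apply the $L^p$ estimates for $\tn$ (\eqref{nonn55}, \eqref{nn5'} or \eqref{7.30+}--\eqref{7.300+}), on $[t/2,t]$ we transfer derivatives and use the $L^2$ estimates (\eqref{nonn1}, \eqref{nonn11}, \eqref{121}). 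Each of these produces factors of $\norm{\La^\alpha\tub}$ or $\norm{\La^\alpha\tud}$ at levels that, under the a priori decay encoded by $M(t)$, yield a bound proportional to $M(t)^2$ after performing the time integral; the residual $\delta$-factors absorb using \eqref{apriori}. The role of the parameter restriction $p<d/2$ in $(i)$, $p<2d/(2(3-\ell)+d)$ in $(ii)$, and the joint $(p,q)$-restriction in $(iii)$ is precisely to guarantee that the exponents appearing in these time integrals are integrable at infinity and that the interpolated Sobolev norms do not exceed the top regularity $\ell$; I would record the worst case explicitly and check it once.

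The main obstacle is the recursive structure: the nonlinear bounds in Lemma \ref{non le} involve norms $\La^{d(1-1/p)+1}\tub$, $\La^{d(1-1/p)}\tud$, etc., at orders that in general exceed the range $[0,s_i^*]$ where the decay is being proven. I would handle this by induction on $s$ starting from the lowest order (where only the entropy estimate \eqref{est0}/\eqref{est00} and the Duhamel formula with $s=0$ are used) and stepping up, using at each step the already-proven faster decay of $\tud$ relative to $\tuc$ (the $-1/2$ gain in \eqref{u12}, \eqref{u1212}, \eqref{7.51+}) to offset the derivative loss in the nonlinear terms. For part $(iii)$, the additional ingredient is the replacement of some factors $\norm{\tub}_{L^p}$ by $\norm{\tvv_1}_{L^q}\ls\delta$ via the identity \eqref{6+.4} and Proposition \ref{prop:6+.1}, which is what upgrades $s_2^*$ to $s_3^*$ and admits the case $d=2$ via the $L^q$ estimates \eqref{7.30+}, \eqref{7.322+}. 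Once all these bounds are in hand, summing the scaled norms yields $M_i(t)\ls K_0+M_i(t)^2$, and smallness of $K_0$ gives $M_i(t)\ls K_0$, which is precisely the content of \eqref{u11}--\eqref{u1212} and \eqref{7.50+}--\eqref{7.51+}.
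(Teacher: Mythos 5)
Your overall strategy coincides with the paper's: a time-weighted bootstrap functional, Lemma \ref{good} to transfer decay from the low-order $L^2$ quantities to the full scaled norms, the Duhamel formula on \eqref{operator form} combined with the projected linear decay of Lemma \ref{linear decay lemma} and the nonlinear bounds of Lemma \ref{non le}, and Proposition \ref{prop:6+.1} in part $(iii)$; closing via $M_i(t)\ls K_0+M_i(t)^2$ is exactly how \eqref{gadkal}, \eqref{ggllad}, \eqref{ggllad22} are obtained.

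The one element of your plan that would fail if executed literally is the proposed \emph{bottom-up induction on $s$}. The nonlinear estimates do not lose to lower levels: $\norm{\lds\tn}_{L^2}$ is controlled by $\norm{\Lambda^{s+1}\tub}_{L^2}$ (one derivative \emph{more}), and the $L^p$ bounds \eqref{nonn55}, \eqref{nn5'} involve $\norm{\Lambda^{d(1-1/p)+1}\tub}_{L^2}$, which sits at the \emph{top} of the admissible range, not below the current level. So decay at level $s$ cannot be deduced from decay at levels $<s$ alone, and a sequential induction does not close. The paper's resolution — which your functional $M_1$ essentially contains but your induction remark contradicts — is to work at the \emph{single} level $\tilde{s}_i=\min\{s_i^*,\,d/2-\epsilon\}$ (resp.\ $d/2+1-\epsilon$) and exploit that the quantity $\norm{\Lambda^{\tilde{s}_i}\tub}_{H^{\ell-\tilde{s}_i}}$ supplied by Lemma \ref{good} controls \emph{all} derivative orders in $[\tilde{s}_i,\ell]$, in particular every norm appearing on the right of the nonlinear estimates (since $d(1-1/p)+2<d/2+1<\ell$ in the relevant regime). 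The functional $\m_i(t)$ therefore needs only the top-level scaled norm together with the $s=0$ norms of $\tuc$ and $\tud$; all intermediate $s$ in \eqref{u11}--\eqref{u12} etc.\ are recovered at the very end by Sobolev interpolation between these two endpoints. If you replace the induction by this simultaneous closure at the two endpoint levels (plus the final interpolation), your argument matches the paper's proof; as written, the induction step is the gap.
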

\begin{proof}
One may first prove $(i)$ under \asmp{\twd 1} and that $d\ge 3$ and $1\le p\le 2$, $p<d/2$.
Denote $\sigma_p= {d}/{2} ( {1}/{p}- {1}/{2})$ and take ${\tilde{s}_1}  = \min\{ s_1^*, d/2-\epsilon \}\equiv\min\{ d(1-1/p) + 1,d/2-\epsilon \}<d/2$ for a sufficiently small positive constant $\epsilon$. 
Then define
\begin{equation}
\begin{split}
 \m_1(t)=\sup_{0\le \tau\le t}&\left\{(1+\tau)^{2\sigma_p+\tilde{s}_1  } \left( \norm{\Lambda^{\tilde{s}_1  } \tub (\tau)}_{H^{\ell-\tilde{s}_1  }}^2+ \norm{\Lambda^{\tilde{s}_1  -1} \tud  (\tau)}_{L^2}^2 \right) \right.
 \\& \left.\quad+(1+\tau)^{2\sigma_p}   \norm{  \tub (\tau)}_{L^2}^2+(1+\tau)^{2\sigma_p+1}  \norm{ \tud(\tau)  }_{L^2}^2 \right\}  .
 \end{split}
\end{equation}

First, taking $s=\tilde{s}_1  $ in \eqref{00000} yields
\begin{equation}\label{case1}
 \norm{\Lambda^{\tilde{s}_1  } \tub(t)}_{H^{\ell-\tilde{s}_1  }}^2\ls e^{-\lambda t}\ttkk  +\int_0^te^{-\lambda(t-\tau)} \norm{\Lambda^{\tilde{s}_1  }\tuc(\tau)}_{L^2}^2   \rmd\tau.
\end{equation}
On the other hand, by the Duhamel principle, it follows from \eqref{operator form} that
\begin{equation}\label{duhamel}
\tub(t)={ e}^{-t\tl} \tub_0 +\int_0^t { e}^{-(t-\tau)\tl}  \tn(\tau) \rmd\tau.
\end{equation}
Then one may apply the linear decay estimates \eqref{linear decay 1} and \eqref{linear decay 2} of Lemma \ref{linear decay lemma} to the formula \eqref{duhamel} to obtain
\begin{align}\label{haha0}
 &\norm{\Lambda^{\tilde{s}_1  } \tuc (t)}_{L^2}+ \norm{\Lambda^{\tilde{s}_1  -1} \tud  (t)}_{L^2}
\nonumber\\&\quad\ls (1+t)^{- \sigma_p- \frac{\tilde{s}_1  }{2}}\left(\norm{\tuc_0}_{L^p}+\norm{\tud_0}_{L^{p^\ast}}+\norm{\Lambda^{\tilde{s}_1  }\tub_0}_{L^2}+\norm{\Lambda^{\tilde{s}_1  -1}\tub_0}_{L^2} \right)
\\ &\qquad +\int_0^t(1+t-\tau)^{-  \sigma_p- \frac{\tilde{s}_1  }{2} }\left(\norm{\tnc(\tau)}_{L^p}+\norm{\tnd(\tau)}_{{L^{p^\ast}}} +\norm{\Lambda^{\tilde{s}_1  }\tn(\tau)}_{L^2}+\norm{\Lambda^{\tilde{s}_1  -1}\tn(\tau)}_{L^2} \right) d\tau .\nonumber
\end{align}
By the estimates \eqref{nonn1}--\eqref{121} from Lemma \ref{non le}, it holds that
\begin{equation}\label{hhal1}
\begin{split}
 & \norm{\tnc}_{L^p}+\norm{\tnd}_{{L^{p^\ast}}} +\norm{\Lambda^{\tilde{s}_1  }\tn}_{L^2}+\norm{\Lambda^{\tilde{s}_1  -1}\tn}_{L^2} 
 \\&\quad\ls   \delta \left( \norm{\Lambda^{ {\tilde{s}_1  } }\tub  }_{H^{\ell-\tilde{s}_1  }}+\norm{\Lambda^{ {\tilde{s}_1  -1} }\tud   }_{L^2}+\norm{\nabla\tub }_{L^\infty}+\norm{\tub }_{L^\infty} \right)+\norm{\Lambda^{\frac{d}{2}\left(1-\frac{1}{p}\right)} \tub}_{L^2}^2
 \\&\quad\ls  \delta \left( \norm{\Lambda^{ {\tilde{s}_1  } }\tub  }_{H^{\ell-\tilde{s}_1  }}+\norm{\Lambda^{ {\tilde{s}_1  -1} }\tud   }_{L^2}\right)+\norm{\Lambda^{\frac{d}{2}\left(1-\frac{1}{p}\right)} \tub}_{L^2}^2.
\end{split}%
\end{equation}
Here in the first inequality it has been used that $ {\tilde{s}_1}  \le d\left(1- {1}/{p} \right)+1 $ and that $d\left(1- {1}/{p} \right)+2<d/2+1< \ell$ for $1\le p<2d/(d+2)$, and in the second inequality it has been used that, since $\tilde{s}_1   <d/2$ and $\ell> d/2 + 1 $ and by the Sobolev interpolation,
\begin{equation}\label{gga11}
\norm{\na \tub  }_{L^\infty}+\norm{  \tub  }_{L^\infty}\ls \norm{\Lambda^{\tilde{s}_1  }  \tub }_{H^{\ell-{\tilde{s}_1  }}}.
\end{equation}
Then plugging the estimates \eqref{hhal1} into \eqref{haha0} yields
\begin{equation} \label{haha}
\begin{split}
 &\norm{\Lambda^{\tilde{s}_1  } \tuc (t)}_{L^2}+ \norm{\Lambda^{\tilde{s}_1  -1} \tud  (t)}_{L^2} 
\\&\quad\ls  (1+t)^{-  \sigma_p- \frac{\tilde{s}_1  }{2} } \ttk + \int_0^t(1+t-\tau)^{-  \sigma_p- \frac{\tilde{s}_1  }{2} } \delta\left(\norm{\Lambda^{ {\tilde{s}_1  } }\tub  }_{H^{\ell-\tilde{s}_1  }} +\norm{\Lambda^{ {\tilde{s}_1  -1} }\tud  }_{L^2}\right) \rmd\tau
 \\&\qquad+ \int_0^t(1+t-\tau)^{-  \sigma_p- \frac{\tilde{s}_1  }{2} } \norm{\Lambda^{\frac{d}{2}\left(1-\frac{1}{p}\right)} \tub(\tau)}_{L^2}^2  \rmd\tau.
\end{split}%
\end{equation}
Now one may estimate the two time integrals in the right hand side of \eqref{haha}. In view of the definition of $\m_1(t)$, the first time integral is bounded by
\begin{equation}\label{eses333}
\delta\int_0^t(1+t-\tau)^{-\sigma_p- \frac{\tilde{s}_1  }{2}}(1+\tau)^{-\sigma_p- \frac{\tilde{s}_1  }{2}} \sqrt{\m_1(\tau)}\,d\tau
\ls \delta (1+t)^{-\sigma_p- \frac{\tilde{s}_1  }{2}} \sqrt{\m_1(t)}.
\end{equation}
Here it has been used that $\sigma_p+\tilde{s}_1  /{2}>1$: if $\tilde{s}_1  =d(1-1/p)+1$, then $\sigma_p+\tilde{s}_1  /2=d/4+1/2>1$ since $d\ge 3$; if $\tilde{s}_1  =d/2 - \epsilon$, then $\sigma_p+\tilde{s}_1  /{2}=d/(2p) - \epsilon/2 > 1$ since $1\le p<d/2$ and one can take $\epsilon$ sufficiently small. On the other hand, since $ {d}/{2} (1- {1}/{p} )< \tilde{s}_1$, by the interpolation the second time integral is bounded by
\begin{equation}\label{eses222}
\begin{split}
 & \int_0^t(1+t-\tau)^{-\sigma_p- \frac{\tilde{s}_1  }{2}}(1+t)^{-2\sigma_p-\frac{d}{2} \left(1-\frac{1}{p}\right)}\m_1(\tau) \rmd\tau
 \\ &\quad= \int_0^t(1+t-\tau)^{-\sigma_p- \frac{\tilde{s}_1  }{2}}(1+t)^{-\sigma_p- \frac{d}{4}}\m_1(\tau)d\tau
\ls  (1+t)^{-\sigma_p- \frac{\tilde{s}_1  }{2}} {\m_1}(t)
\end{split}%
\end{equation}
since ${\tilde{s}_1}  < d/2$.
Thus substituting the estimates \eqref{eses333}--\eqref{eses222} into \eqref{haha} yields
\begin{equation}\label{eeee111}
 \norm{\Lambda^{\tilde{s}_1  } \tuc (t)}_{L^2}+ \norm{\Lambda^{\tilde{s}_1  -1} \tud  (t)}_{L^2} \ls  (1+t)^{-\sigma_p- \frac{\tilde{s}_1  }{2}}\left(\ttk + \delta\sqrt{\m_1(t)} + \m_1(t) \right).
\end{equation}
Then by \eqref{eeee111}, it follows from \eqref{case1} that
\begin{equation}\begin{split}
 \norm{\Lambda^{\tilde{s}_1  } \tub(t)}_{H^{\ell-\tilde{s}_1  }}^2&\ls e^{-\lambda t}\ttkk  +\int_0^te^{-\lambda(t-\tau)} (1+\tau)^{-2\sigma_p-\tilde{s}_1  }\left(\ttkk +\delta^2 {\m_1(\tau)} + \m_1^2(\tau) \right)  \rmd\tau\\
 &\ls (1+t)^{-2\sigma_p-\tilde{s}_1  }\left(\ttkk +\delta^2 {\m_1(t)} + \m_1^2(t) \right)
 .\end{split}
\end{equation}

Lastly and similarly, by using in addition the estimates \eqref{121}, one can get
\begin{equation}\label{eeee4441111}
\begin{split}
 \norm{\tuc(t)}_{L^2} &\ls (1+t)^{-\sigma_p}\left(\norm{\tuc_0}_{L^p}+\norm{\tud_0}_{L^{p^\ast}}+\norm{\tub_0}_{L^2}\right)
  \\&\quad+\int_0^t(1+t-\tau)^{-\sigma_p  }\left(\norm{\tnc(\tau)}_{L^p}+\norm{\tnd(\tau)}_{{L^{p^\ast}}}+\norm{\tn(\tau)}_{L^2}\right) \rmd\tau
  \\&\ls  (1+t)^{-\sigma_p}\ttk +\int_0^t(1+t-\tau)^{-\sigma_p}(1+\tau)^{-\sigma_p-\frac{\tilde{s}_1  }{2} }\left(\delta\sqrt{ \m_1(\tau)}  + \m_1 (\tau) \right) \rmd\tau
 \\&\ls  (1+t)^{-\sigma_p}\left(\ttk+\delta\sqrt{\m_1(t)} + \m_1(t) \right)
\end{split}%
\end{equation}
and
\begin{align}\label{eeee444}
\nonumber
 \norm{\tud(t)}_{L^2} & \ls  (1+t)^{-\sigma_p-\hal} \ttk +\int_0^t(1+t-\tau)^{-\sigma_p-\hal}(1+\tau)^{-\sigma_p-\frac{\tilde{s}_1  }{2}}\left( \delta\sqrt{ \m_1(\tau)}  + \m_1 (\tau) \right) \rmd\tau
 \\&\ls  (1+t)^{-\sigma_p-\hal}\left(\ttk +\delta\sqrt{ \m_1(t)}  + \m_1 (t) \right).
\end{align}
since $\tilde{s}_1  \ge1$.
Now combining the estimates \eqref{eeee111}--\eqref{eeee444} and recalling the definition of $\m_1(t)$ yields
\begin{equation}
\m_1(t)\ls  \ttkk   +\delta^2 \m_1(t)+ \m_1^2(t),
\end{equation}
which implies, since both $\delta$ and $\m_1(0)\le \ttk$ are small,
\begin{equation}\label{gadkal}
\m_1(t)\ls  \ttkk .
\end{equation}
By again the definition of $\m_1(t)$ and by the interpolation, one can deduce \eqref{u11}--\eqref{u12} from \eqref{gadkal} since $\epsilon>0$ can be taken arbitrarily small.

 One then turns to prove $(ii)$ under \asmp{\td 1} and that $d\ge 3$,  $1\le p\le 2$, $p<d$ and further that $p < {2d}/{(2(3-\ell)+d)}$ if $\ell\le 3$. Take $\tilde{s}_2  =\min\{ s_2^*, d/2+ 1 - \epsilon \}$ for a sufficiently small positive constant $\epsilon$.
Then define
\begin{equation}
\begin{split}
 \m_2(t)=\sup_{0\le \tau\le t}&\left\{(1+\tau)^{2\sigma_p+\tilde{s}_2  } \left( \norm{\Lambda^{\tilde{s}_2  } \tub (\tau)}_{H^{\ell-\tilde{s}_2  }}^2+ \norm{\Lambda^{\tilde{s}_2  -1} \tud  (\tau)}_{L^2}^2\right)\right.
\\& \left.\quad+(1+\tau)^{2\sigma_p}   \norm{  \tub (\tau)}_{L^2}^2+(1+\tau)^{2\sigma_p+1}  \norm{ \tud(\tau)  }_{L^2}^2 \right\}  .
 \end{split}
\end{equation}

First, taking $s=\tilde{s}_2  $ in \eqref{000001} yields
\begin{equation}\label{case11212}
\norm{\Lambda^{{\tilde{s}_2} } \tub(t)}_{H^{\ell-{\tilde{s}_2} }}^2\ls e^{-\lambda t}\ttkk  +\int_0^te^{-\lambda(t-\tau)} \left(\norm{\Lambda^{{\tilde{s}_2} }\tuc(\tau)}_{L^2}^2+ \norm{  \Lambda^{{\tilde{s}_2} -1}\tud (\tau)}_{L^2}^2 +\norm{ \tub(\tau)  }_{L^\infty}^4\right)  \rmd\tau.
\end{equation}
On the other hand, if $1\le p\le 2d/(d+2)$ and hence $p^\ast\equiv dp/(d-p)\le 2$, one may apply the linear decay estimates \eqref{linear decay 1} and \eqref{linear decay 2} of Lemma \ref{linear decay lemma} to the formula \eqref{duhamel} to deduce
\begin{align} \label{lll111}
 &\norm{\Lambda^{\tilde{s}_2  } \tuc (t)}_{L^2}+ \norm{\Lambda^{\tilde{s}_2  -1} \tud  (t)}_{L^2}
\nonumber\\&\quad\ls (1+t)^{- \sigma_p- \frac{\tilde{s}_2  }{2}}\left(\norm{\tuc_0}_{L^p}+\norm{\tud_0}_{L^{\frac{dp}{d-p}}}+\norm{\Lambda^{\tilde{s}_2  }\tub_0}_{L^2}+\norm{\Lambda^{\tilde{s}_2  -1}\tub_0}_{L^2} \right)
\\&\qquad
+\int_0^t(1+t-\tau)^{-  \sigma_p- \frac{\tilde{s}_2  }{2} }\left(\norm{\tnc(\tau)}_{L^p}+\norm{\tnd(\tau)}_{{L^{\frac{dp}{d-p}}}}+\norm{\Lambda^{\tilde{s}_2  }\tn(\tau)}_{L^2}+\norm{\Lambda^{\tilde{s}_2  -1}\tn(\tau)}_{L^2}\right) \rmd\tau;\nonumber
\end{align}
if $2d/(d+2)< p\le 2$, one may apply instead the linear decay estimates \eqref{linear decay 1'} and \eqref{linear decay 2'} to deduce
\begin{align}\label{lll222}
 &\norm{\Lambda^{\tilde{s}_2  } \tuc (t)}_{L^2}+ \norm{\Lambda^{\tilde{s}_2  -1} \tud  (t)}_{L^2}
\nonumber \\&\quad\ls (1+t)^{- \sigma_p- \frac{\tilde{s}_2  }{2}}\left(\norm{\tuc_0}_{L^p}+\norm{\Lambda\tud_0}_{L^p}+\norm{\Lambda^{\tilde{s}_2  }\tub_0}_{L^{2}}+\norm{\Lambda^{\tilde{s}_2  -1}\tub_0}_{L^2} \right)
\\ &\qquad   +\int_0^t(1+t-\tau)^{-  \sigma_p- \frac{\tilde{s}_2  }{2} }\left(\norm{\tnc(\tau)}_{L^p}+\norm{\Lambda\tnd(\tau)}_{{L^{p}}} +\norm{\Lambda^{\tilde{s}_2  }\tn(\tau)}_{L^2}+\norm{\Lambda^{\tilde{s}_2  -1}\tn(\tau)}_{L^2} \right) \rmd\tau . \nonumber
\end{align}
By the estimates \eqref{nonn61} from Lemma \ref{non le}, it holds that
for $1 \leq p < 2d/(d+2)$,
\begin{equation} \label{7.74}
\norm{ \tnd}_{L^{\frac{dp}{d-p}}} \leq \delta \norm{\La^{\tilde{s}_2}  \tub}_{H^{\ell-{\tilde{s}_2} }} + \norm{\La^{\frac{{\tilde{s}_2} }{2}} \tub}_{H^{\ell-{\tilde{s}_2} /2}}^2,
\end{equation}
where it has been used that $ {\tilde{s}_2}  \le d\left(1- {1}/{p} \right)+1$ and that $d(1-1/p) + 2 < d/2 + 1 < \ell$ for $1 \leq p < 2d/(d+2)$,
and by the estimates \eqref{nonn6123}--\eqref{nonn61235} that for $2d/(d+2) < p < 2$,
\begin{equation}\label{7.7444}
\norm{\La \tnd}_{L^p} \leq \delta \left( \norm{\La^{\tilde{s}_2}  \tub}_{H^{\ell-{\tilde{s}_2} }} + \norm{\La^{{\tilde{s}_2} -1} \tud}_{L^2} \right)+ \norm{\La^{{\tilde{s}_2} -1} \tub}_{L^2} \norm{\tub}_{L^\infty}.
\end{equation}
These two estimates together with the estimates \eqref{nonn11}--\eqref{nn5} and \eqref{nonn61235} imply that the two sums inside the two time integrals in the right hand side of \eqref{lll111} and \eqref{lll222} are bounded by
 \begin{equation}\label{haha012121112}
\begin{split}
 &  \delta \left( \norm{\Lambda^{ {\tilde{s}_2  } }\tub  }_{H^{\ell-\tilde{s}_2  }}+\norm{\Lambda^{ {\tilde{s}_2  -1} }\tud  }_{L^2}
 +\norm{\nabla\tub }_{L^\infty}+\norm{\tud }_{L^\infty} \right) \\&\quad + \norm{\La^{\frac{{\tilde{s}_2} }{2}} \tub}_{H^{\ell-{\tilde{s}_2} /2}}^2 + \norm{\tub }_{L^\infty}\left(
 \norm{\Lambda^{\tilde{s}_2  -1} \tub }_{L^2}+  \norm{\tub }_{L^\infty} \right)
 \\&\quad\ls \delta \left(   \norm{\Lambda^{ {\tilde{s}_2  } }\tub  }_{H^{\ell-\tilde{s}_2  }}+\norm{\Lambda^{ {\tilde{s}_2  -1} }\tud  }_{L^2}
 \right)   + \norm{\La^{\frac{{\tilde{s}_2} }{2}} \tub}_{H^{\ell-{\tilde{s}_2} /2}}^2 + \norm{\tub }_{L^\infty}
 \norm{\Lambda^{\tilde{s}_2  -1} \tub }_{H^{\ell-\tilde{s}_2+1}} .
\end{split}%
\end{equation}
Here in the first bound it has been used that $ {\tilde{s}_2}  \le d\left(1- {1}/{p} \right)+1,\ \tilde{s}_2< d/2+1 $,
and in the second bound it has been used that, since ${\tilde{s}_2}  < d/2+1 < \ell$ and by Sobolev interpolation,
 \begin{equation}
\norm{\nabla \tub}_{L^\infty} + \norm{\tud}_{L^\infty} \ls \norm{\La^{{\tilde{s}_2} } \tub}_{H^{\ell-{\tilde{s}_2} }} + \norm{\La^{{\tilde{s}_2} -1} \tud}_{L^2}.
\end{equation}
Plugging the estimates \eqref{haha012121112} into \eqref{lll111} or \eqref{lll222},  in each case one can get
\begin{equation}\label{haha01212}
\begin{split}
 &\norm{\Lambda^{\tilde{s}_2  } \tuc (t)}_{L^2}+ \norm{\Lambda^{\tilde{s}_2  -1} \tud  (t)}_{L^2}
  \\&\quad\ls  (1+t)^{-  \sigma_p- \frac{\tilde{s}_2  }{2} } \ttk +
  \int_0^t(1+t-\tau)^{-  \sigma_p- \frac{\tilde{s}_2  }{2} }  \delta \left(\norm{\Lambda^{ {\tilde{s}_2  } }\tub  }_{H^{\ell-\tilde{s}_2  }}+\norm{\Lambda^{ {\tilde{s}_2  -1} }\tud  }_{L^2}  \right)d\tau \\
 & \qquad + \int_0^t (1+t-\tau)^{-  \sigma_p- \frac{\tilde{s}_2  }{2} } \norm{\La^{\frac{{\tilde{s}_2} }{2}} \tub }_{H^{\ell - {\tilde{s}_2} /2}}^2d\tau
 \\&\qquad+ \int_0^t(1+t-\tau)^{-  \sigma_p- \frac{\tilde{s}_2  }{2} }\norm{\tub }_{L^\infty}
 \norm{\Lambda^{\tilde{s}_2  -1} \tub }_{H^{\ell-\tilde{s}_2+1}} d\tau.
\end{split}%
\end{equation}
Now one may estimate the three time integrals in the right hand side of \eqref{haha01212}. In view of the definition of $\m_2(t)$, the first time integral is bounded by
\begin{equation}\label{eses3331212}
\delta\int_0^t(1+t-\tau)^{-\sigma_p- \frac{\tilde{s}_2  }{2}}(1+\tau)^{-\sigma_p- \frac{\tilde{s}_2  }{2}} \sqrt{\m_2(\tau)}\,d\tau
\ls \delta (1+t)^{-\sigma_p- \frac{\tilde{s}_2  }{2}} \sqrt{\m_2(t)}.
\end{equation}
Here it has been used that $\sigma_p+\tilde{s}_2  /2>1$: if $\tilde{s}_2  =d(1-1/p)+1$, then $\sigma_p+\tilde{s}_2  /2=d/4+1/2>1$ since $d\ge 3$; if $\tilde{s}_2  =d/2+1 - \epsilon$, then $\sigma_p+\tilde{s}_2  /{2}=d/(2p)+1/2 - \epsilon/2 > 1$ since $1\le p<d$ and $\epsilon$ can be taken sufficiently small; if $\tilde{s}_2  =\ell-1$, then $\sigma_p+\tilde{s}_2  /{2}= {d}/{2}( {1}/{p} - 1/2) +  {(\ell-1)}/{2}>1$, which trivially holds when $\ell>3$, and follows from $1 \leq p < {2d}/{(2(3-\ell)+d)}$ when $\ell\le 3$. On the other hand, by the interpolation the second time integral is bounded by
\begin{equation} \label{8.83+}
\int_0^t (1+t-\tau)^{-\sigma_p-\frac{{\tilde{s}_2} }{2}} (1+\tau)^{-2 \sigma_p - \frac{{\tilde{s}_2} }{2}} \m_2(\tau) \rmd \tau \ls (1+t)^{-\sigma_p-\frac{{\tilde{s}_2} }{2}} \m_2(t).
\end{equation}
To bound the third time integral one may use the interpolation to estimate
 \begin{equation}\label{8.86}
 \norm{\tub }_{L^\infty}\ls \norm{\Lambda^{\min\{\frac{d}{2}- {\epsilon},\tilde{s}_2  \}}\tub }_{H^{\ell-\min\{\frac{d}{2}- {\epsilon},\tilde{s}_2  \}}}\ls (1+t)^{- \sigma_p-\frac{\min\{\frac{d}{2}- {\epsilon},\tilde{s}_2  \}}{2}} \sqrt{\m_2(t)}.
\end{equation}
Hence, the third time integral is bounded by
 \begin{equation}\label{eses2221212}
 \begin{split}
&    \int_0^t(1+t-\tau)^{-\sigma_p- \frac{\tilde{s}_2  }{2}}
  (1+\tau)^{- 2\sigma_p-\frac{\min\{\frac{d}{2} -  {\epsilon},\tilde{s}_2  \}+\tilde{s}_2  -1}{2} }
  \m_2(\tau)d\tau
 \\ &\quad\ls  (1+t)^{-\sigma_p- \frac{\tilde{s}_2  }{2}} \m_2(t).
\end{split}
\end{equation}
Here it has been used that  ${\tilde{s}_2} \ge  1$ and
$2 \sigma_p + ( {d}/{2} +  \tilde{s}_2  - 1) /2=  \sigma_p +  {{\tilde{s}_2} }/{2}+(d/p-1)/2>\sigma_p +  {{\tilde{s}_2} }/{2}$ since $1\le p<d$ and one can take $\epsilon$ sufficiently small.
Thus substituting the estimates \eqref{eses3331212}, \eqref{8.83+} and \eqref{eses2221212} into \eqref{haha01212} yields
\begin{equation}\label{eeee1111212}
 \norm{\Lambda^{\tilde{s}_2  } \tuc (t)}_{L^2}+ \norm{\Lambda^{\tilde{s}_2  -1} \tud  (t)}_{L^2} \ls  (1+t)^{-\sigma_p- \frac{\tilde{s}_2  }{2}}\left(\ttk + \delta\sqrt{\m_2(t)} + \m_2(t) \right).
\end{equation}
Then by \eqref{eeee1111212} together with an use of the estimates \eqref{eses2221212}, the bound of the third time integral in the right hand side of \eqref{haha01212}, one may deduce from \eqref{case11212} that
\begin{equation}\label{eeee1111212+}
\begin{split}
 \norm{\Lambda^{\tilde{s}_2  } \tub(t)}_{H^{\ell-\tilde{s}_2  }}^2&\ls e^{-\lambda t} \ttkk  +\int_0^te^{-\lambda(t-\tau)} (1+\tau)^{-2\sigma_p-\tilde{s}_2  }\left(\ttkk +\delta^2 {\m_2(t)} + \m_2^2(t) \right)  \rmd\tau\\
 &\ls  (1+t)^{-2\sigma_p-\tilde{s}_2  }\left(\ttkk +\delta^2 {\m_2(t)} + \m_2^2(t) \right)
 .\end{split}
\end{equation}

Finally, note that one needs to revise the estimates \eqref{eeee4441111}--\eqref{eeee444} in the case $(i)$ since now $\sigma_p-\tilde{s}_1>1$ may not be true for $d/2\le p<d$.
Then it is crucial to exploit the exponential decay property of the high frequency part in the linear decay estimates of Lemma \ref{linear decay lemma}. Indeed, applying the linear decay estimates \eqref{linear decay 1} and \eqref{linear decay 2'} and using in addition the estimates \eqref{nonn612}, one can deduce
\begin{equation} \label{ttt3}
\begin{split}
 \norm{\tuc(t)}_{L^2} &\ls (1+t)^{-\sigma_p}\left(\norm{\tuc_0}_{L^p}+\norm{\tud_0}_{L^{p^\ast}}+\norm{\tub_0}_{L^2}\right)
  \\&\quad+\int_0^t(1+t-\tau)^{-\sigma_p  }\left(\norm{\tnc(\tau)}_{L^p}+\norm{\tnd(\tau)}_{{L^{p^\ast}}}\right)+e^{-\lambda(t-\tau)}\norm{\tn(\tau)}_{L^2} \rmd\tau
  \\&\ls  (1+t)^{-\sigma_p}\ttk+\int_0^t(1+t-\tau)^{-\sigma_p}(1+\tau)^{-\sigma_p-\frac{\tilde{s}_2  }{2} }\left(\delta\sqrt{ \m_2(\tau)}  + \m_2 (\tau) \right) \rmd\tau
 \\&\quad  +\int_0^t  e^{-\lambda(t-\tau)}(1+\tau)^{-2\sigma_p-\frac{\min\{\frac{d}{2}- {\epsilon},\tilde{s}_2  \}}{2} } { \m_2(\tau)} \rmd\tau
 \\&\ls  (1+t)^{-\sigma_p}\left(\ttk+\delta\sqrt{\m_2(t)} + \m_2(t) \right), \hs \textrm{for } 1 \leq p\le  \frac{2d}{d+2},
\end{split}%
\end{equation}
\begin{equation}
\begin{split}
\norm{\tuc(t)}_{L^2} &\ls (1+t)^{-\sigma_p}\left(\norm{\tuc_0}_{L^p}+\norm{\La \tud_0}_{L^{p}}+\norm{\tub_0}_{L^2}\right)
  \\&\quad+\int_0^t(1+t-\tau)^{-\sigma_p  }\left(\norm{\tnc(\tau)}_{L^p}+\norm{\La \tnd(\tau)}_{{L^{p}}}\right)+e^{-\lambda(t-\tau)}\norm{\tn(\tau)}_{L^2} \rmd\tau
 \\&\ls  (1+t)^{-\sigma_p}\left(\ttk+\delta\sqrt{\m_2(t)} + \m_2(t) \right), \hs \textrm{for } \frac{2d}{d+2} < p \leq 2.
\end{split}
\end{equation}
and
\begin{align} \label{ttt5}
 \nonumber\norm{\tud(t)}_{L^2} & \ls  (1+t)^{-\sigma_p-\hal}\ttk+\int_0^t(1+t-\tau)^{-\sigma_p-\hal}(1+\tau)^{-\sigma_p-\frac{\tilde{s}_2  }{2} }\left(\delta\sqrt{ \m_2(\tau)}  + \m_2 (\tau) \right) \rmd\tau
 \\&\quad  +\int_0^t  e^{-\lambda(t-\tau)}(1+\tau)^{-2\sigma_p-\frac{\min\{\frac{d}{2}- {\epsilon},\tilde{s}_2  \}}{2} } { \m_2(\tau)} \rmd\tau
 \\&\ls  (1+t)^{-\sigma_p-\hal}\left(\ttk +\delta\sqrt{ \m_2(t)} + \m_2(t) \right).\nonumber
\end{align}
Here it has been used that $\tilde{s}_2  \ge1$ and that
$\sigma_p+ {d}/{4}= {d}/({2p})>1/2$ since $1\le p<d.$
Based on these estimates \eqref{eeee1111212}--\eqref{ttt5}, one can get that for $\delta$ and $\m_2(0)\le \ttk$ small,
\begin{equation}\label{ggllad}
\m_2(t)\ls  \ttkk .
\end{equation}
This implies \eqref{u1112}--\eqref{u1212}.

Finally, one turns to prove $(iii)$ under \asmp{\td 1}, \asmp{\td 3} and that $d\ge 2$ and $\norm{\tvv_1}_{L^{q}} \ls \delta$ with $1\le p\le q\le 2$, $ q<d$ and further that $p < {2d}/{(2(3-\ell)+d)}$ if $\ell\le 3$.
Take ${\tilde{s}_3}  = \min \{ s_3^*,d/2+1-\epsilon \}$ for a sufficiently small positive constant $\epsilon$. Then define
\begin{equation} \label{7.100}
\begin{split}
\m_3 (t)= \sup_{0\leq \tau \leq t} &\left\{ (1+\tau)^{2\sigma_p+{\tilde{s}_3} } \left( \norm{\La^{{\tilde{s}_3} } \tub(\tau)}_{H^{\ell-{\tilde{s}_3} }}^2 + \norm{\La^{{\tilde{s}_3} -1} \tud(\tau)}_{L^2}^2 \right)\right. \\
&\quad   \left.+ (1+\tau)^{2\sigma_p} \norm{\tub(\tau)}_{L^2}^2 + (1+\tau)^{2\sigma_p+1} \norm{\tud(\tau)}_{L^2}^2 \right\}
\end{split}
\end{equation}
One can still use the same estimates \eqref{case11212}--\eqref{lll222} in the case $(ii)$, however, the nonlinear estimates can be improved by using the estimates \eqref{7.30+}--\eqref{7.32+} from Lemma \ref{non le}. Indeed, by the estimates \eqref{7.30+}--\eqref{7.300+}, it holds that for $1\le p<2$,
\begin{equation}
\begin{split}
 \norm{\tnc }_{L^p}  \ls &\delta \left( \norm{\La^{{\tilde{s}_3} } \tub }_{H^{\ell-{\tilde{s}_3} }} + \norm{\La^{{\tilde{s}_3} -1} \tud }_{L^2}  \right) \\
&  + \norm{\La^{d(1-\frac{1}{p})} \tub }_{L^2} \left(\norm{\tub }_{L^\infty} + \norm{\La \tub }_{L^2} + \norm{\tud }_{L^2}  \right),
\end{split}
\end{equation}
and by the estimates \eqref{7.322+}--\eqref{7.32+}, it holds that for $ 2d/(d+2)< p<2$,
\begin{equation} \label{7.102}
\begin{split}
\norm{\La \tnd }_{L^p} & \ls \delta \left( \norm{\La^{{\tilde{s}_3} } \tub }_{H^{\ell-{\tilde{s}_3} }} + \norm{\La^{{\tilde{s}_3} -1} \tud }_{L^2} \right)\\
& \quad + \norm{\La^{d(1-\frac{1}{p})}\tub}_{L^2}    \norm{\tub}_{L^\infty}+\norm{\La^{d(1-\frac{1}{p})} \tub}_{H^1}\norm{\Lambda \tub}_{L^2} .
\end{split}
\end{equation}
Hence, noting further \eqref{nn5} and \eqref{nonn61235}, one may improve the estimates \eqref{haha012121112} to be
\begin{equation} \label{7.101}
\begin{split}
&  \delta \left( \norm{\La^{{\tilde{s}_3} } \tub }_{H^{\ell-{\tilde{s}_3} }} + \norm{\La^{{\tilde{s}_3} -1} \tud }_{L^2}   \right)  \\
& \quad + \norm{\tub }_{L^\infty}
 \norm{\Lambda^{\tilde{s}_3  -1} \tub }_{H^{\ell-\tilde{s}_3+1}}+ \norm{\tub }_{L^\infty}\norm{\La^{d(1-\frac{1}{p})} \tub }_{L^2} \\
& \quad+ \norm{\La^{\frac{d}{2}(1-\frac{1}{p})+\hal} \tub(\tau)}_{L^2}^2+ \norm{\La^{d(1-\frac{1}{p})} \tub }_{H^1} \left( \norm{\La \tub }_{L^2} + \norm{\tud }_{L^2}  \right).
\end{split}
\end{equation}
Plugging the estimates \eqref{7.101} to \eqref{lll111} or \eqref{lll222}, in each case it holds that
\begin{equation} \label{7.104}
\begin{split}
& \norm{\La^{{\tilde{s}_3} } \tuc(t) }_{L^2} + \norm{\La^{{\tilde{s}_3} -1} \tud(\tau)}_{L^2} \\
& \ls (1+ t)^{-\sigma_p - \frac{{\tilde{s}_3} }{2}} \ttk + \int_0^t (1+t-\tau)^{-\sigma_p - \frac{{\tilde{s}_3} }{2}} \delta \left( \norm{\La^{{\tilde{s}_3} } \tub(\tau)}_{H^{\ell-{\tilde{s}_3} }} + \norm{\La^{{\tilde{s}_3} -1} \tud(\tau)}_{L^2} \right)   \rmd\tau \\
& \quad + \int_0^t (1+t-\tau)^{-\sigma_p-\frac{{\tilde{s}_3} }{2}} \bigg( \norm{\tub(\tau) }_{L^\infty}\left(
 \norm{\Lambda^{\tilde{s}_3  -1} \tub(\tau) }_{H^{\ell-\tilde{s}_3+1}}+\norm{\La^{d(1-\frac{1}{p})} \tub(\tau) }_{L^2}   \right)   \\
& \qquad\quad+ \norm{\La^{\frac{d}{2}(1-\frac{1}{p})+\hal} \tub(\tau)}_{L^2}^2+ \norm{\La^{d(1-\frac{1}{p})} \tub(\tau) }_{H^1} \left( \norm{\La \tub(\tau) }_{L^2} + \norm{\tud(\tau) }_{L^2}  \right) \bigg) \rmd\tau.
\end{split}
\end{equation}
Now one may estimate the two time integrals in the right hand side of \eqref{7.104}. In view of the definition of $\m_3 (t)$, the first time integral is bounded by
\begin{equation}\label{eses3331212'}
\delta\int_0^t(1+t-\tau)^{-\sigma_p- \frac{\tilde{s}_3  }{2}}(1+\tau)^{-\sigma_p- \frac{\tilde{s}_3  }{2}} \sqrt{\m_3 (\tau)}\,d\tau
\ls \delta (1+t)^{-\sigma_p- \frac{\tilde{s}_3  }{2}} \sqrt{\m_3 (t)}.
\end{equation}
Here it has been used that $\sigma_p+\tilde{s}_3  /2>1$:  if $\tilde{s}_3  =d(1/2+1/q-1/p)+1$, then $\sigma_p+\tilde{s}_3  /2=d/(2q)+1/2>1$ since $1\le q<d$; if $\tilde{s}_3  =d(1 -1/p)+2$, then $\sigma_p+\tilde{s}_3  /2=d/4+1>1$; if $\tilde{s}_3  =d/2+1 - \epsilon$, then $\sigma_p+\tilde{s}_3  /{2}=d/(2p)+1/2 - \epsilon/2 > 1$ since $1\le p<d$ and one can take $\epsilon$ sufficiently small;  if $\tilde{s}_3  =\ell-1$, then $\sigma_p+\tilde{s}_3  /{2}= {d}/{2}( {1}/{p} - 1/2) +  {(\ell-1)}/{2}>1$ which trivially holds when $\ell>3$, and follows from $1 \leq p < {2d}/{(2(3-\ell)+d)}$ when $\ell\le 3$. To estimate the second time integral, it is noted that,  similar to \eqref{eses2221212} the first term is bounded by
\begin{equation}
(1+t)^{-\sigma_p- \frac{\tilde{s}_3  }{2}} \m_3 (t).
\end{equation}
Since $d(1-1/p)\le \tilde{s}_3$ one may use the interpolation, together with the estimates similar to \eqref{8.86}, to bound the second term by
 \begin{equation}
 \begin{split}
&  \int_0^t(1+t-\tau)^{-\sigma_p- \frac{\tilde{s}_3  }{2}}
  (1+\tau)^{- 2\sigma_p-\frac{d}{2} \left(1-\frac{1}{p}\right)-\frac{\min\{\frac{d}{2} -  {\epsilon},\tilde{s}_3  \} }{2} }
  \m_3 (\tau) \rmd\tau
 \\ &\quad\ls  (1+t)^{-\sigma_p- \frac{\tilde{s}_3  }{2}} \m_3 (t).
\end{split}
\end{equation}
Here it has been used that
$ 2 \sigma_p +    {d}/{2} (1- {1}/{p} ) +  {d}/{4}  =\sigma_p+  {d}/{2}  \geq \sigma_p + (d/2+1)/2 > \sigma_p +  {{\tilde{s}_3} }/{2}$ since $d\ge 2$ and $\tilde{s}_3 < d/2+1$.
Similarly, the last three terms are bounded by
\begin{equation}\label{tttt1}
\begin{split}
 & \int_0^t(1+t-\tau)^{-\sigma_p- \frac{\tilde{s}_3  }{2}}(1+t)^{-2\sigma_p-\frac{d}{2} \left(1-\frac{1}{p}\right)-\hal}\m_3(\tau) \rmd\tau
 \\ &\quad= \int_0^t(1+t-\tau)^{-\sigma_p- \frac{\tilde{s}_3  }{2}}(1+t)^{-\sigma_p- \frac{d}{4}-\hal}\m_3(\tau) \rmd\tau
\ls  (1+t)^{-\sigma_p- \frac{\tilde{s}_3  }{2}} {\m_3}(t),
\end{split}%
\end{equation}
since $\tilde{s}_3<d/2+1$. Thus substituting the estimates \eqref{eses3331212'}--\eqref{tttt1} into \eqref{7.104} yields
\begin{equation} \label{ttt2}
 \norm{\Lambda^{\tilde{s}_3  } \tuc (t)}_{L^2}+ \norm{\Lambda^{\tilde{s}_3  -1} \tud  (t)}_{L^2} \ls  (1+t)^{-\sigma_p- \frac{\tilde{s}_3  }{2}}\left(\ttk + \delta\sqrt{\m_3 (t)} + \m_3 (t) \right).
\end{equation}
Then by \eqref{ttt2}, similarly as \eqref{eeee1111212+}, it follows from \eqref{case11212} that
\begin{equation}
\norm{\Lambda^{\tilde{s}_3  } \tub(t)}_{H^{\ell-\tilde{s}_3  }}^2  \ls  (1+t)^{-2\sigma_p-\tilde{s}_3  }\left(K_0^2+\delta^2 {\m_3 (t)} + \m_3 ^2(t) \right).
\end{equation}

Finally, similarly as \eqref{ttt3}--\eqref{ttt5} in the case $(ii)$,
one can get
\begin{equation}
\norm{\tuc(t)}_{L^2} \ls (1+t)^{-\sigma_p} \left( \ttk + \delta \sqrt{\m_3(t)} + \m_3(t)\right)
\end{equation}
and
\begin{equation} \label{ttt6}
\norm{\tud(t)}_{L^2} \ls (1+t)^{-\sigma_p-\hal} \left( \ttk + \delta \sqrt{\m_3(t)} + \m_3(t)\right).
\end{equation}
Thus combining \eqref{ttt2}--\eqref{ttt6}, one can get that for $\delta$ and $\m_3 (0)\le \ttk$ small,
\begin{equation}\label{ggllad22}
\m_3 (t)\ls  K_0^2.
\end{equation}
This implies \eqref{7.50+}--\eqref{7.51+}.
\end{proof}

\begin{remark}
In the statement of the assertions $(ii)$ and $(iii)$ of Proposition \ref{decay decay}, it has been used implicitly that for $2d/(d+2)< p< 2$,
\begin{equation}
\norm{\Lambda \tud_0}_{L^p}\ls \norm{\tud_0}_{L^2}+\norm{\Lambda \tud_0}_{L^2} \le \ttk,
\end{equation}
which follows from an use of Lemma \ref{A3}.
\end{remark}

\section{Proof of the theorems}\label{999}

This section is devoted to prove the main theorems.
By the standard local existence theory \cite{kato1975local,majda1984local} and a continuity argument one needs only to show how to close the {\it a priori} estimates. 
\begin{proof}[Proof of Theorem \ref{thm:1.1}] Under the assumptions of Theorem \ref{thm:1.1}, one assumes a priori that $\norm{\tu(t)}_{H^\ell} \leq \delta$.
Then by the $(i)$ assertions of Propositions \ref{energy energy} and \ref{decay decay}, taking $\tilde{s}_1$ be the one in the proof of the assertion $(i)$ of Proposition \ref{decay decay}, it holds that
\begin{align}\label{end1}
  \norm{\tu(t)}_{H^\ell}
 & \ls \norm{\tu_0}_{H^\ell} +\int_0^t  \norm{ \Lambda^{\tilde{s}_1}   \tub(\tau)  }_{H^{\ell-\tilde{s}_1}}  \rmd\tau
 \\&\ls \norm{\tu_0}_{H^\ell} + K_0 \int_0^t (1+\tau)^{-\sigma_p-\frac{\tilde{s}_1}{2}} \rmd \tau\ls K_0.\nonumber
 \end{align}
This closes the a priori estimates by taking $K_0$ sufficiently small. Consequently, by the estimates \eqref{end1} and the $(i)$ assertions of Propositions \ref{enes6} and \ref{decay decay}, one can obtain \eqref{th11}--\eqref{th13} and thus conclude the proof of Theorem \ref{thm:1.1}.
\end{proof}

\begin{proof}[Proof of Theorem \ref{thm:1.2}] Under the assumptions of Theorem \ref{thm:1.2}, one assumes a priori that $\norm{\tu(t)}_{H^\ell} \leq \delta$. Then by the $(ii)$ assertions of Propositions \ref{energy energy} and \ref{decay decay}, taking $\tilde{s}_2$ be the one in the proof of the assertion $(ii)$ of Proposition \ref{decay decay}, it holds that
\begin{equation}\label{end2}
\begin{split}
  \norm{\tu(t)}_{H^\ell}
 & \ls \norm{\tu_0}_{H^\ell} +\int_0^t \left( \norm{ \Lambda^{\tilde{s}_2}   \tub(\tau)  }_{H^{\ell-\tilde{s}_2}}+\norm{ \Lambda^{\tilde{s}_2-1}   \tud(\tau)  }_{L^2} +\norm{ \tub(\tau) }_{L^\infty}^2\right)  \rmd\tau
 \\&\ls \norm{\tu_0}_{H^\ell} + K_0 \int_0^t \left((1+\tau)^{-\sigma_p-\frac{\tilde{s}_2}{2}}+(1+\tau)^{-2\sigma_p-\frac{d}{2}+\epsilon}\right) \rmd \tau\ls K_0
 \end{split}
 \end{equation}
since $d\ge 3$. This closes the a priori estimates by taking $K_0$ sufficiently small. Consequently, by the estimates \eqref{end2}, the assertion $(i)$ of Proposition \ref{enes6} and the assertion $(ii)$ of Proposition \ref{decay decay}, one can obtain \eqref{th21}--\eqref{th23} and thus conclude the proof of Theorem \ref{thm:1.2}.
\end{proof}

\begin{proof}[Proof of Theorem \ref{thm:1.3}] Under the assumptions of Theorem \ref{thm:1.3}, one assumes a priori that $\norm{\tu(t)}_{H^\ell}+\norm{\tvv_1(t)}_{L^{q}} \leq \delta$. Then by the assertion $(ii)$ of Proposition \ref{energy energy} and the assertion $(iii)$ of Proposition \ref{decay decay}, taking $\tilde{s}_3$ be the one in the proof of the assertion $(iii)$ of Proposition \ref{decay decay}, it holds that
\begin{equation}\label{end3}
\begin{split}
  \norm{\tu(t)}_{H^\ell}
 & \ls \norm{\tu_0}_{H^\ell} +\int_0^t \left( \norm{ \Lambda^{\tilde{s}_3}   \tub(\tau)  }_{H^{\ell-\tilde{s}_3}}+\norm{ \Lambda^{\tilde{s}_3-1}   \tud(\tau)  }_{L^2}+\norm{ \tub(\tau) }_{L^\infty}^2\right)  \rmd\tau
 \\&\ls \norm{\tu_0}_{H^\ell} + \tk \int_0^t \left((1+\tau)^{-\sigma_p-\frac{\tilde{s}_3}{2}}+(1+\tau)^{-2\sigma_p-\frac{d}{2}+\epsilon}\right) d \tau\ls \tk
 \end{split}
 \end{equation}
since $2\sigma_p+ {d}/{2}=d/p>1$ for $1\le p<d$ and one can take $\epsilon$ sufficiently small.
Moreover, by the estimates \eqref{lpp} of Proposition \ref{prop:6+.1} and Remark \ref{v10data}, it holds that
\begin{equation}\label{end4}
\begin{split}
\norm{\tvv_1(t)}_{L^{q}} & \ls \exp \left(C \ttk \right) \times \left( \tk + \int_0^t \norm{\tub }_{L^{2q}} \left( \norm{\tud }_{L^{2q}} + \norm{\nabla \tub }_{L^{2q}} + \norm{\tub }_{L^{4q}}^2 \right) \rmd\tau\right)
\\
& \ls \tk + \ttk  \int_0^t\left((1+\tau)^{-\frac{d}{2}\left (\frac{2}{p} - \frac{1}{q}\right) -\hal } + (1+\tau)^{-\frac{d}{2}\left(\frac{3}{p} - \frac{1}{q}\right)}  \right) \rmd\tau\ls \tk
\end{split}
\end{equation}
since $ {d}/{2}  ({2}/{p} -  {1}/{q}) +1/2\ge {d}/{(2p)}   +1/2>1 $ and $ {d}/{2}  ({3}/{p} -  {1}/{q})  \ge {d}/{p} >1 $ for $1\le p\le q<d$.
These close the a priori estimates by taking $\tk$ sufficiently small. Consequently, by the estimates \eqref{end3}, the assertion $(ii)$ of Proposition \ref{enes6} and the assertion $(iii)$ of Proposition \ref{decay decay}, one can obtain \eqref{th31}--\eqref{th33} and thus conclude the proof of Theorem \ref{thm:1.3}.
\end{proof}

\section{Application to Damping Full Euler System} \label{App_C}

In this section, we verify that the damping full Euler system of adiabatic flow satisfies all the assumptions \asmp{A1}--\asmp{A4}, \asmp{B} and \asmp{D1}--\asmp{D3}. The system can be written in terms of the density $\rho$, the velocity $v$ and the entropy $S$ of the flow as
\begin{equation}\label{euler1}
\begin{cases}
\dis\p_t \rho + \Div (\rho v) = 0, \\
\dis\p_t (\rho v) + \Div(\rho v \otimes v + p I_d) = - \rho v, \\
\dis\p_t (\rho S ) + \Div(\rho v S) = 0.
\end{cases}
\end{equation}
It is well known that the thermodynamics variables $\rho$, $p$, $S$, the temperature $\theta$ and the internal energy $e$ satisfy the relation
\begin{equation}
\rmd e = \theta \rmd S - p \rmd(\frac{1}{\rho}),
\end{equation}
and they can be determined by knowing any two of them; we may view $p, \theta, $ and $e$ as functions of $\rho$ and $S$. It is assumed that $e$ is strictly convex with respect to $\rho$ and $S$.

The system \eqref{euler1} can be rewritten into
\begin{equation}
\begin{cases}
\dis\p_t S + \sum_{k=1}^{d} v_k  S_{x_k} = 0, \\
\dis \p_t \rho + \sum_{k=1}^{d} v_k   \rho_{x_k} + \sum_{k=1}^{d} \rho  ( v_k)_{x_k}
 = 0, \\
\dis\p_t v_i + \frac{p_\rho}{\rho}    \rho_{x_i} + \sum_{k=1}^{d} v_k   ( v_i)_{x_k} + \frac{p_{{}_S}}{\rho}  S_{x_i} = - v_i,\ i=1,\dots,d.
\end{cases}
\end{equation}
Denote $u = (S, \rho, v^T)^T$. By a direct calculation, the matrix
\[
A(u,\omega) =
\begin{pmatrix}
v \cdot \omega & 0 & 0 \\
0 & v \cdot \omega & \rho \omega^T \\
\frac{p_{{}_S}}{\rho} \omega & \frac{p_\rho}{\rho} \omega & v \cdot \omega I_d
\end{pmatrix}
\]
possesses $d+2$ real eigenvalues
\[
\lambda_1(u,\omega) = \dots = \lambda_d(u,\omega) = v \cdot \omega, \quad \lambda_{d+1}(u,\omega) = v \cdot \omega + \sqrt{p_\rho}, \quad \lambda_{d+2}(u,\omega) = v \cdot \omega - \sqrt{p_\rho}
\]
and a complete set of left eigenvectors
\begin{gather*}
l_1(u,\omega) = (1,0,0^T), \quad l_k(u,\omega) = (0,0,\chi_k^T)\  (k=2,\dots,d), \\
l_{d+1}(u,\omega) = (\frac{\ps}{\sqrt{p_\rho}}, \sqrt{p_\rho}, \rho \omega ),
\quad l_{d+2}(u,\omega) = (\frac{\ps}{\sqrt{p_\rho}}, \sqrt{p_\rho}, -\rho \omega)
\end{gather*}
and right eigenvectors
\begin{gather*}
r_1(u,\omega) = \begin{pmatrix}
1 \\ -\frac{\ps}{p_\rho} \\ 0
\end{pmatrix}, \quad
r_k(u,\omega) = \begin{pmatrix}
0 \\ 0 \\ \chi_k
\end{pmatrix}\ (k = 2,\dots, d), \\
r_{d+1}(u,\omega) = \begin{pmatrix}
0 \\ \frac{1}{2\sqrt{p_\rho}} \\ \frac{\omega }{2\rho}
\end{pmatrix}, \quad
r_{d+2}(u,\omega) = \begin{pmatrix}
0 \\ \frac{1}{2\sqrt{p_\rho}} \\ -\frac{\omega}{2\rho}
\end{pmatrix}.
\end{gather*}
Here $\chi_k \ ( k=2, \dots, d)$ are $d-1$ orthogonal unit vectors that are perpendicular to $\omega$.

It is clear that the assumption \asmp{B} is satisfied. Noting that $Q(u) = (0,0,-v^T)^T$, we check that around the equilibrium state $u_\ast  = (S_\ast,\rho_*, 0^T)^T$ with $S_\ast$ and $\rho_\ast>0$ given constants,
\begin{itemize}
\item $\nabla_u Q(u) = \begin{pmatrix}
0 & 0 \\
0 & -I_d
\end{pmatrix}$.
\item Take the entropy $\bar\eta = \frac{1}{2} \rho |v|^2 + \rho e(\rho,S) $ and the entropy flux $\bar\psi^k= v^k ( \frac{1}{2} \rho |v|^2 + \rho e(\rho,S) +p(\rho,S)) $, and then define
    \[\eta(u) = \frac{1}{2} \rho |v|^2 + \rho e(\rho,S) - \rho e(\rho_\ast,S_\ast) - \rho S \theta(\rho_\ast,S_\ast)+\rho_\ast S_\ast \theta(\rho_\ast,S_\ast) - \frac{\rho}{\rho_*} p(\rho_\ast,S_\ast) + p(\rho_\ast,S_\ast).\]
\item $\nabla_u \eta(u) \cdot Q(u) = - \rho |v|^2 \leq - \frac{\rho^*}{2} |v|^2$.
\item $\nabla_u Q(u_\ast ) r_j(u_\ast ,\omega) \neq 0, \ j=2,\dots, d+2. $
\item $\nabla_u Q(u) r_1(u) \equiv 0$ and $\nabla_u \lambda_1(u,\omega) \cdot r_1(u) \equiv 0$.
\item $ l_1(u) \nabla_u Q(u) \equiv 0. $
\end{itemize}
These verify the assumptions \asmp{A1}--\asmp{A4} and \asmp{D1}--\asmp{D3}.
Thus, Theorem \ref{thm:1.3} produces the unique global solution to the damping full Euler system in {$\rd$, $d\ge 2$.


\appendix

\section{Linear transformation}\label{linear trans}

In this appendix it will be showed that for the system \eqref{1.1_Sys} under the assumptions  \asmp{A1}--\asmp{A4} and \asmp{B}, the conditions
\eqref{1.10} and \eqref{1.11} can be always satisfied after a proper linear transformation.

By \asmp{A1}, one can apply to the system \eqref{1.1_Sys} the following linear transformation
\begin{equation}
u = u(u^{*}) := \begin{pmatrix}
I_r & 0 \\
\left( \frac{\partial Q_p}{\partial u_{p'}}(0) \right)^{-1}_{r+1 \leq p,p' \leq n} \left( \frac{\partial Q_p}{\partial u_{q}}(0) \right)_{\substack{1 \leq q \leq r \\ r+1 \leq p \leq n}} & I_{n-r}
\end{pmatrix}^{-1} u^{*}.
\end{equation}
It is easy to check that the system after this transformation keeps the same form as \eqref{1.1_Sys}, and all the assumptions \asmp{A1}--\asmp{A4} and \asmp{B} \  still hold.
Moreover, one has
\begin{equation} \label{2.41111}
\frac{\partial Q_i^{*}}{\partial u^{*}_q}(0) = 0, \hs q = 1, \dots, r;\ i = 1, \dots, n.
\end{equation}
This verifies the condition \eqref{1.10}.

Now, by   \asmp{A1}, \asmp{A4} and \eqref{1.10}, it is direct to get
\begin{equation}
 r^*_1(0) \in \mathrm{span} \{e_1, \dots, e_r\},
\end{equation}
which implies that $ r^*_{p1}(0) = 0 $ $(p = r+1,\dots,n)$ and for some $i_1 \in \{1, \dots, r\}$,
\begin{equation}
r^*_{i_11}(0) \neq 0.
\end{equation}
Then one may choose $\{ i_2, \dots, i_r \} = \{1, \dots, r\} \setminus \{ i_1 \}$ and have that
\begin{equation}
\mathrm{span}\{r^*_1(0), e_{i_2}, \dots, e_{i_r}, e_{r+1}, \dots, e_n \} = \mathbb{R}^n.
\end{equation}
One can further apply to the system \eqref{1.1_Sys} the linear transformation
\begin{equation}
u^* = u^*(u^{**}):= \left( r_1^*(0), e_{i_2}, \dots,e_{i_r}, e_{r+1}, \dots, e_n \right) u^{**}.
\end{equation}
Again, it is easy to check that the transformed system keeps the same form as \eqref{1.1_Sys}, and all the assumptions \asmp{A1}--\asmp{A4}, \asmp{B}\  and the condition \eqref{1.10} still hold.
Moreover, one has
\begin{equation}
r^{**}_{1}(0) = e_1.
\end{equation}
This verifies the condition \eqref{1.11}.



\section{Analytic tools}\label{1section_appendix}

First recall the classical Sobolev interpolation of the Gagliardo-Nirenberg inequality.
\begin{lem} \label{A3}
Let  $1 \le  q,\,r \le  \infty$ and $0 \leq \alpha < \beta$. Then it holds that
\begin{equation}\label{aaa333}
\norm{\Lambda^\alpha f}_{L^p} \ls \norm{\Lambda^\beta f}_{L^r}^\theta \norm{f}_{L^q}^{1-\theta},
\end{equation}
where
\begin{equation}
 \frac{1}{p} - \frac{\alpha}{d} = \theta \left( \frac{1}{r} - \frac{\beta}{d} \right) + (1-\theta) \frac{1}{q},
\end{equation}
for all $\theta$ in the interval
\[
\frac{j}{m} \leq \theta \leq 1
\]
with the following exceptional cases.
\begin{enumerate}
\item If $\alpha = 0$, $r\beta < d$, $q = \infty$, then one needs an additional assumption that either $f$ tends to zero at infinity or $f \in L^{\tilde{q}}$ for some finite $\tilde{q} > 0$.
\item If $1 < r < \infty$, meanwhile $\beta- \alpha - d/r$ is a non negative integer, then \eqref{aaa333} holds only for $\theta$ satisfying $\alpha/\beta < \theta < 1$.
\end{enumerate}
\end{lem}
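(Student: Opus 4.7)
The inequality is the fractional version of the classical Gagliardo-Nirenberg interpolation; the plan is to prove it by a Littlewood-Paley decomposition combined with Bernstein inequalities and an optimization over a frequency cutoff.

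First, I would fix a smooth dyadic partition of unity $1 = \sum_{j \in \mathbb{Z}} \varphi_j(\xi)$ on the Fourier side with $\mathrm{supp}\,\varphi_j \subset \{|\xi| \sim 2^j\}$, and let $\Delta_j$ denote the corresponding Fourier multipliers, so that (modulo the standard low-frequency remainder) $g = \sum_j \Delta_j g$. Bernstein's inequality states that for $1 \le b \le a \le \infty$ and any real $\gamma$,
\[
\|\Lambda^\gamma \Delta_j g\|_{L^a} \ls 2^{j\gamma + jd(1/b - 1/a)} \|\Delta_j g\|_{L^b}.
\]
Next, introduce a parameter $N > 0$ and split $\Lambda^\alpha f = P_{\le N}\Lambda^\alpha f + P_{>N}\Lambda^\alpha f$. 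For the low-frequency piece, applying Bernstein with $a=p$, $b=q$ to each block and summing the resulting geometric series over $2^j \le N$ gives
\[
\|P_{\le N}\Lambda^\alpha f\|_{L^p} \ls N^{\alpha + d(1/q - 1/p)}\|f\|_{L^q};
\]
for the high-frequency piece, writing $\Lambda^\alpha = \Lambda^{\alpha-\beta}\Lambda^\beta$ and applying Bernstein with $a=p$, $b=r$ to each block of $\Lambda^\beta f$ yields
\[
\|P_{>N}\Lambda^\alpha f\|_{L^p} \ls N^{\alpha - \beta + d(1/r - 1/p)}\|\Lambda^\beta f\|_{L^r}.
\]
Both dyadic sums converge on account of the constraint $\alpha/\beta < \theta < 1$, which forces the respective exponents to have the correct sign. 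Adding these two estimates and optimizing over $N$ by equating the two terms produces exactly the scaling dictated by $1/p - \alpha/d = \theta(1/r - \beta/d) + (1-\theta)/q$ and delivers the stated bound in the non-critical range.

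The main obstacles are the two exceptional cases. In case (1), when $\alpha = 0$, $r\beta < d$ and $q = \infty$, the low-frequency estimate above would require controlling $\|P_{\le N} f\|_{L^p}$ by $\|f\|_{L^\infty}$, which is false in general without some decay of $f$; the supplementary assumption that $f$ vanishes at infinity or lies in some $L^{\tilde q}$ with $\tilde q < \infty$ restores this control by a density argument together with Young's convolution inequality applied to the rescaled convolution kernel of $P_{\le N}$. In case (2), when $1 < r < \infty$ and $\beta - \alpha - d/r$ is a nonnegative integer, the dyadic sum controlling the high-frequency piece is borderline at $\theta = \alpha/\beta$, producing only a Besov-type bound there; the Sobolev-type statement therefore holds only for $\theta$ strictly larger than $\alpha/\beta$, accounting for the exclusion in the lemma. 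The endpoint $\theta = 1$ reduces to the pure fractional Sobolev embedding $\|\Lambda^\alpha f\|_{L^p} \ls \|\Lambda^\beta f\|_{L^r}$, which itself is a direct consequence of the Hardy-Littlewood-Sobolev inequality applied to the Riesz potential $\Lambda^{\alpha - \beta}$.
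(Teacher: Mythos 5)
The paper's own ``proof'' of this lemma is a one-line citation to Nirenberg's lectures, so any self-contained argument is by definition a different route. Your Littlewood--Paley/Bernstein scheme with optimization over the frequency cutoff $N$ is the standard modern proof of the fractional Gagliardo--Nirenberg inequality, and it does establish the estimate in the regime that this paper actually uses (in particular the $L^2$-based variant Lemma \ref{A1}, where $q=r=2$, and the instance $\|\nabla \tilde v_1\|_{L^{q}}\ls\|\tilde v_1\|_{L^{q}}+\|\nabla\tilde v_1\|_{L^2}$).

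As a proof of the lemma in its stated generality ($1\le q,r\le\infty$ arbitrary), however, the sketch has concrete gaps. First, Bernstein's inequality only passes from a smaller to a larger Lebesgue exponent, so your low-frequency step needs $q\le p$ and your high-frequency step needs $r\le p$; neither is guaranteed. For example $d=3$, $\alpha=1$, $\beta=2$, $r=2$, $q=\infty$, $\theta=1/2$ gives $p=4$, an admissible non-exceptional case, and the claimed low-frequency bound $\|P_{\le N}\Lambda f\|_{L^4}\ls N^{1/4}\|f\|_{L^\infty}$ is false (test it on bumps of height one and radius $R\to\infty$ with $N$ fixed). Second, the convergence criterion for the two dyadic sums is misidentified: inserting the scaling relation, the low- and high-frequency exponents of $N$ equal $\theta\bigl(\beta-d(1/r-1/q)\bigr)$ and $(1-\theta)\bigl(d(1/r-1/q)-\beta\bigr)$ respectively, so the sign conditions needed for the optimization are governed by $0<\theta<1$ together with $\beta>d(1/r-1/q)$, not by $\alpha/\beta<\theta<1$. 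In particular the endpoint $\theta=\alpha/\beta$, which the lemma includes outside exceptional case (2) (the printed range $j/m\le\theta\le1$ is a typo for $\alpha/\beta\le\theta\le1$), and the configurations with $\beta\le d(1/r-1/q)$ require separate arguments such as blockwise H\"older interpolation of the Littlewood--Paley pieces or the classical integration-by-parts proof. Third, the $\theta=1$ endpoint is not always Hardy--Littlewood--Sobolev: HLS fails at $r=1$ and at $p=\infty$, both permitted by the statement. None of this affects the applications in the paper, but it means the citation to Nirenberg is doing real work that the sketch does not replace.
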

\begin{proof}
See THEOREM of Lecture II in \cite{Nirenberg}.
\end{proof}

In the lemma above the restriction $0 \leq \alpha < \beta$ is inconvenient for the use in this paper, one may employ the following variant.
\begin{lem}\label{A1}
Let $2\le p\le \infty$ and $\alpha,\beta,\gamma\in \mathbb{R}$. Then it holds that
\begin{align}\label{A.1}
\norm{\Lambda^\alpha f}_{L^p}\lesssim \norm{ \Lambda^\beta f}_{L^2}^{\theta}
\norm{ \Lambda^\gamma f}_{L^2}^{1-\theta}.
\end{align}
Here $0\le \theta\le 1$ (if $p=\infty$, then it is required that $0<\theta<1$) and $\alpha$ satisfies
\begin{align}\label{A22222}
\alpha+ d \left(\frac12-\frac{1}{p}\right)=\beta\theta+\gamma(1-\theta).
\end{align}
\end{lem}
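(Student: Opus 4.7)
The plan is to remove the restriction $0\le\alpha<\beta$ of Lemma \ref{A3} by working directly on the Fourier side, handling the three regimes $p=2$, $2<p<\infty$, and $p=\infty$ in turn. First I would treat the base case $p=2$: here the exponent condition \eqref{A22222} reduces to $\alpha=\theta\beta+(1-\theta)\gamma$, and by Plancherel one has the pointwise Fourier identity
\begin{equation*}
|\xi|^{2\alpha}=\bigl(|\xi|^{2\beta}\bigr)^{\theta}\bigl(|\xi|^{2\gamma}\bigr)^{1-\theta}.
\end{equation*}
Applying H\"older in $\xi$ with conjugate exponents $1/\theta$ and $1/(1-\theta)$ then yields
\begin{equation*}
\norm{\Lambda^\alpha f}_{L^2}^2=\int |\xi|^{2\alpha}|\hat f|^2\,\rmd\xi\le\Bigl(\int |\xi|^{2\beta}|\hat f|^2\,\rmd\xi\Bigr)^{\theta}\Bigl(\int |\xi|^{2\gamma}|\hat f|^2\,\rmd\xi\Bigr)^{1-\theta},
\end{equation*}
which is exactly \eqref{A.1} for $p=2$, for every $\alpha,\beta,\gamma\in\mathbb{R}$ and $0\le\theta\le1$.

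For $2<p<\infty$ I would reduce to the $L^2$ estimate via the (homogeneous) Sobolev embedding $\dot H^{s}\hookrightarrow L^p$ with $s=d(\frac12-\frac1p)>0$, namely $\norm{\Lambda^\alpha f}_{L^p}\lesssim\norm{\Lambda^{\alpha+d(\frac12-\frac1p)}f}_{L^2}$. Setting $\alpha':=\alpha+d(\frac12-\frac1p)$, the hypothesis \eqref{A22222} gives $\alpha'=\theta\beta+(1-\theta)\gamma$, so the $p=2$ case already proved applies and delivers the desired bound. Note that because Sobolev embedding is used only through the standard Riesz-potential mapping $\Lambda^{-s}:L^2\to L^p$, no sign assumption on $\alpha$ is needed.

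For $p=\infty$ the Sobolev embedding $\dot H^{d/2}\hookrightarrow L^\infty$ fails and a direct Fourier-splitting argument is required; this is where the strict constraint $0<\theta<1$ enters. Without loss of generality assume $\gamma<\beta$, so that \eqref{A22222} forces $\gamma-d/2<\alpha<\beta-d/2$. By Fourier inversion $|\Lambda^\alpha f(x)|\le\int|\xi|^\alpha|\hat f(\xi)|\,\rmd\xi$, and splitting at a radius $R>0$ and applying Cauchy--Schwarz with the weights $|\xi|^{2\gamma}$ on $\{|\xi|\le R\}$ and $|\xi|^{2\beta}$ on $\{|\xi|>R\}$ gives
\begin{equation*}
\norm{\Lambda^\alpha f}_{L^\infty}\lesssim R^{\alpha-\gamma+d/2}\norm{\Lambda^\gamma f}_{L^2}+R^{\alpha-\beta+d/2}\norm{\Lambda^\beta f}_{L^2},
\end{equation*}
both integrals converging precisely because $\gamma-d/2<\alpha<\beta-d/2$. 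Optimizing in $R$ by balancing the two terms and reading off the exponents using \eqref{A22222} yields \eqref{A.1} with the required $\theta$.

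The only genuine obstacle is the endpoint $p=\infty$: verifying that the strict inequalities coming from $0<\theta<1$ are exactly what makes the two weighted $L^2$-integrals converge, and that the subsequent $R$-optimization reproduces the exponents dictated by \eqref{A22222}. The remaining steps are standard Fourier-side manipulations.
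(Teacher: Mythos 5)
Your proof is correct and follows essentially the same route as the paper: for $2\le p<\infty$ the paper likewise combines the Sobolev embedding $\norm{\Lambda^\alpha f}_{L^p}\lesssim\norm{\Lambda^{\alpha+d(1/2-1/p)}f}_{L^2}$ with Plancherel and H\"older on the Fourier side. For $p=\infty$ the paper simply cites Exercise 6.1.2 of Grafakos, and your Fourier-splitting argument with optimization in $R$ is precisely the standard proof of that cited fact, with the strict inequalities $\gamma-d/2<\alpha<\beta-d/2$ (equivalent to $0<\theta<1$) correctly identified as what makes both weighted integrals converge.
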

\begin{proof}
For the case $2\le p<\infty$, it follows from the classical Sobolev inequality that
\begin{equation}\label{11in}
\norm{\Lambda^\alpha f}_{p}\lesssim \norm{ \Lambda^\zeta f}_{2}\hbox{ with }\zeta=\alpha+d\left(\frac{1}{2}-\frac{1}{p}\right).
\end{equation}
By the Parseval theorem and H\"older's inequality, we have
\begin{equation}\label{12in}
\norm{ \Lambda^\zeta f}_{2}\lesssim \norm{ \Lambda^\beta f}_{2}^{\theta}\norm{ \Lambda^\gamma f}_{2}^{1-\theta},
\end{equation}
where $0\le \theta\le 1$ is defined by \eqref{A22222}. Hence, \eqref{A.1} follows by \eqref{11in}--\eqref{12in}.

For the case $p=\infty$, one may refer to Exercise 6.1.2 in \cite{grafakos2009modern}.
\end{proof}

Recall lastly the following commutator and product estimates:
\begin{lem}\label{A2}
Let $l\ge 0$ and define the commutator
\begin{align}\label{commutator}
\left[\Lambda^l,g\right]h=\Lambda^l(gh)-g\Lambda^lh.
\end{align}
Then it holds that
\begin{align}\label{commutator estimate}
\norm{\left[\Lambda^l,g\right]h}_{L^{p_0}} \ls\norm{\na g}_{L^{p_1}}
\norm{\Lambda^{l-1}h}_{L^{p_2}} +\norm{\Lambda^l g}_{L^{p_3}}\norm{h}_{L^{p_4}}.
\end{align}
In addition, for $l\ge0$,
\begin{align}\label{product estimate}
\norm{\Lambda^l(gh)}_{L^{p_0}} \ls\norm{g}_{L^{p_1}}
\norm{\Lambda^{l}h}_{L^{p_2}} +\norm{\Lambda^l g}_{L^{p_3}} \norm{ h}_{L^{p_4}}.
\end{align}
Here $p_0,p_2,p_3\in(1,\infty)$ and
\begin{align*}
\frac1{p_0}=\frac1{p_1}+\frac1{p_2}=\frac1{p_3}+\frac1{p_4}.
\end{align*}
\end{lem}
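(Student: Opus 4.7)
The plan is to establish these as classical Kato--Ponce type fractional Leibniz and commutator estimates by means of Bony's paraproduct decomposition. Let $\{\Delta_j\}_{j\ge -1}$ denote the standard Littlewood--Paley dyadic blocks and $S_j = \sum_{k\le j}\Delta_k$, and write
\begin{equation*}
gh = T_g h + T_h g + R(g,h),
\end{equation*}
where $T_g h := \sum_j S_{j-2}g\cdot \Delta_j h$ is the paraproduct and $R(g,h) := \sum_{|j-k|\le 1}\Delta_j g\,\Delta_k h$ is the high--high remainder.

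For the product estimate \eqref{product estimate} I would apply $\Lambda^l$ to each of the three pieces. Since $\Delta_j h$ lives at frequency $\sim 2^j$ while $S_{j-2}g$ is essentially of lower frequency, $\Lambda^l T_g h$ behaves like $\sum_j S_{j-2}g\cdot\Lambda^l\Delta_j h$; combining H\"older's inequality with the Fefferman--Stein vector-valued maximal inequality and the square-function characterisation of $L^{p_0}$ (valid since $p_0\in(1,\infty)$) produces the bound $\norm{g}_{L^{p_1}}\norm{\Lambda^l h}_{L^{p_2}}$. The term $\Lambda^l T_h g$ is treated symmetrically and gives $\norm{\Lambda^l g}_{L^{p_3}}\norm{h}_{L^{p_4}}$. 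For the remainder $\Lambda^l R(g,h)$ the two factors have comparable frequencies so Bernstein's inequality redistributes the $l$ derivatives onto either factor, producing either of the two target bounds.

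For the commutator estimate \eqref{commutator estimate} the starting identity is
\begin{equation*}
[\Lambda^l,g]h = [\Lambda^l, T_g]h + \bigl(\Lambda^l T_h g - T_{\Lambda^l h}g\bigr) + \bigl(\Lambda^l R(g,h) - R(g,\Lambda^l h)\bigr),
\end{equation*}
whose essential feature is the first piece. Writing $[\Lambda^l,T_g]h$ out on the Fourier side and exploiting the paraproduct frequency constraint $|\eta|\ll|\xi|$, a first-order Taylor expansion of $|\xi+\eta|^l - |\xi|^l$ shows that the operator $[\Lambda^l, T_g]$ effectively gains one derivative on $g$ while losing one on $h$, which yields precisely the term $\norm{\nabla g}_{L^{p_1}}\norm{\Lambda^{l-1}h}_{L^{p_2}}$. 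The second grouping $\Lambda^l T_h g - T_{\Lambda^l h}g$ rearranges into a paraproduct of $g$ against $\Lambda^l h$ with a modified convolution kernel, controlled by $\norm{\Lambda^l g}_{L^{p_3}}\norm{h}_{L^{p_4}}$ via the same Fefferman--Stein / Littlewood--Paley machinery; the remainder differences are bounded analogously.

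The main technical obstacle is the bookkeeping in verifying boundedness of the paraproducts in the relevant mixed Lebesgue norms: the hypotheses $p_0,p_2,p_3\in(1,\infty)$ are exactly what is needed for the Fefferman--Stein maximal inequality and the Littlewood--Paley square-function characterisation to apply. Since these are by now classical inequalities, a clean alternative is simply to invoke the existing harmonic-analysis literature on fractional Leibniz and commutator rules (see e.g.~\cite{grafakos2009modern}), from which both \eqref{commutator estimate} and \eqref{product estimate} follow in the stated generality.
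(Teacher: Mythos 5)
Your proposal is mathematically sound, but it takes a genuinely different route from the paper: the paper's entire proof of this lemma is a one-line citation to Lemma~3.1 of \cite{ju2004existence}, whereas you sketch the underlying harmonic-analysis argument from scratch. Your outline is the standard Kato--Ponce/Bony proof, and its key points are correct: the three-way paraproduct splitting, the observation that the only delicate piece of the commutator is $[\Lambda^l,T_g]h$, the first-order Taylor expansion of $|\xi+\eta|^l-|\xi|^l$ on the region $|\eta|\ll|\xi|$ which trades one derivative from $h$ to $g$ and produces the $\norm{\na g}_{L^{p_1}}\norm{\Lambda^{l-1}h}_{L^{p_2}}$ term, and the role of the hypotheses $p_0,p_2,p_3\in(1,\infty)$ in licensing the Fefferman--Stein and square-function bounds (while $p_1,p_4$ may be $\infty$, as needed elsewhere in the paper). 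Your closing remark --- that one may instead simply invoke the literature --- is precisely what the authors do. What your approach buys is self-containedness and transparency about where each term in the right-hand side comes from; what the paper's approach buys is brevity, at the cost of deferring all the Littlewood--Paley bookkeeping (which your sketch also leaves unexecuted) to the cited reference. If you wanted to turn your sketch into a complete proof you would still need to carry out the vector-valued maximal-function estimates for each paraproduct piece in the stated mixed Lebesgue exponents, but there is no conceptual gap in the plan.
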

\begin{proof}
One may refer to Lemma 3.1 in \cite{ju2004existence}.
\end{proof}

\section*{Acknowledgements}

The authors are deeply grateful to Professor Zhouping Xin for his great guidance and constant encouragement.

\bibliographystyle{plain}

\bibliography{Ref20151010}

%
%
%

\end{document}